\newtheorem{theorem}{Theorem}[section]
\newtheorem{lemma}[theorem]{Lemma}
\newtheorem{corollary}[theorem]{Corollary}
\newtheorem{proposition}[theorem]{Proposition}
\theoremstyle{remark}
\newtheorem{remark}[theorem]{Remark}
\theoremstyle{definition}
\newtheorem{definition}[theorem]{Definition}
\newcommand{\R}{\mathbb{R}}
\newcommand{\C}{\mathbb{C}}
\newcommand{\N}{\mathbb{N}}
\newcommand{\D}{\mathbb{D}}
\newcommand{\loc}{\mathrm{loc}}
\renewcommand{\mod}{\operatorname{Mod}}
\renewcommand{\bar}[1]{\overline{#1}}
\DeclareMathOperator{\dist}{dist}
\DeclareMathOperator{\inter}{int}
\DeclareMathOperator{\diam}{diam}
\renewcommand{\Re}{\operatorname{Re}}
\renewcommand{\Im}{\operatorname{Im}}
\DeclareFontFamily{U}{tipa}{}
\DeclareFontShape{U}{tipa}{m}{n}{<->tipa10}{}
\newcommand{\arc@char}{{\usefont{U}{tipa}{m}{n}\symbol{62}}}%
\newcommand{\arc}[1]{\mathpalette\arc@arc{#1}}
\newcommand{\arc@arc}[2]{%
  \sbox0{$\m@th#1#2$}%
  \vbox{
    \hbox{\resizebox{\wd0}{\height}{\arc@char}}
    \nointerlineskip
    \box0
  }%
}
\numberwithin{equation}{section}
\numberwithin{figure}{section}
\title{Piecewise quasiconformal dynamical systems of the unit circle}
\author{Yusheng Luo}
\address[Yusheng Luo]{Department of Mathematics, Cornell University, 212 Garden Ave, Ithaca, NY 14853, USA}
\email{yusheng.s.luo@gmail.com}
\thanks{The first-named author is partially supported by NSF Grant DMS-2349929.}
\author{Dimitrios Ntalampekos}
\address[Dimitrios Ntalampekos]{Department of Mathematics, Aristotle University of Thessaloniki, Thessaloniki, 54152, Greece.}
\email[Corresponding author]{dntalam@math.auth.gr}
\thanks{The second-named author was partially supported by NSF Grant DMS-2246485}
\keywords{Covering map, unit circle, expansive map, quasiconformal map, David map, Blaschke product, mating, parabolic basin}
\subjclass[2020]{Primary 30J10, 37F10, 37F31; Secondary 30C62, 30C65.}
\begin{document}
\begin{abstract}
    We study piecewise quasiconformal covering maps of the unit circle. We provide sufficient conditions so that a conjugacy between two such dynamical systems has a quasiconformal or David extension to the unit disk. Our main result generalizes the main result of \cite{LyubichMerenkovMukherjeeNtalampekos:David}, which deals with piecewise analytic maps. As applications, we provide a classification of piecewise quasiconformal maps of the circle up to quasisymmetric conjugacy, we prove a general conformal mating theorem for Blaschke products, and we study the quasiconformal geometry of parabolic basins. 
\end{abstract}
\maketitle

\setcounter{tocdepth}{1}
\tableofcontents

\section{Introduction}

The purpose of the paper is to study piecewise quasiconformal covering maps of the unit circle. The main theorem provides sufficient conditions so that a homeomorphism conjugating two such maps admits a quasiconformal or David extension to the unit disk. Due to the technicalities of the results, we give rough formulations in the introduction and defer the full statements to later sections.

\subsection{Main extension theorem}
Let $f\colon \mathbb S^1\to \mathbb S^1$ be a covering map and let $\{a_0,\dots,a_r\}$ be a Markov partition associated to $f$. For $k\in \{0,\dots,r\}$ we define $A_k$ to be the closed arc from $a_k$ to $a_{k+1}$, where $a_{r+1}=a_0$. We consider three basic assumptions that will be described in more detail in Sections \ref{section:markov_expansive} and \ref{section:extension}.  
\begin{enumerate}[label=\normalfont(\arabic*)]
\medskip
    \item (Endpoint behavior) Each point $a\in \{a_0,\dots,a_r\}$ is either symmetrically hyperbolic or symmetrically parabolic, as defined in Section \ref{section:hyperbolic_parabolic_points}. See condition \ref{condition:hp}.
\medskip
    \item (Extension to neighborhood of the circle) For each $k\in \{0,\dots,r\}$ there exist open neighborhoods $U_k$ of $\inter{A_k}$ and $V_k$ of $f(\inter{A_k})$ in $\C$ such that $f$ has an extension to a {homeomorphism} from $U_k$ onto $V_k$. Furthermore, we assume that $U_j\subset V_k=f(U_k)$ whenever $A_j\subset f(A_k)$ and that the regions $U_k$, $k\in \{0,\dots,r\}$, are pairwise disjoint; see Figure \ref{figure:condition2}. We still denote the extension to $\bigcup_{k=0}^r (U_k\cup \{a_k,a_{k+1}\})$ by $f$. See condition \ref{condition:uv}. 
\medskip
    \item (Quasiconformality) The iterates of $f$ on preimages of $U_k$, $k\in \{0,\dots,r\}$, are uniformly quasiconformal. See condition \ref{condition:qs}.
\medskip
\end{enumerate}

\begin{figure}
    \centering
    \begin{tikzpicture}
        \node {\includegraphics[scale=0.4]{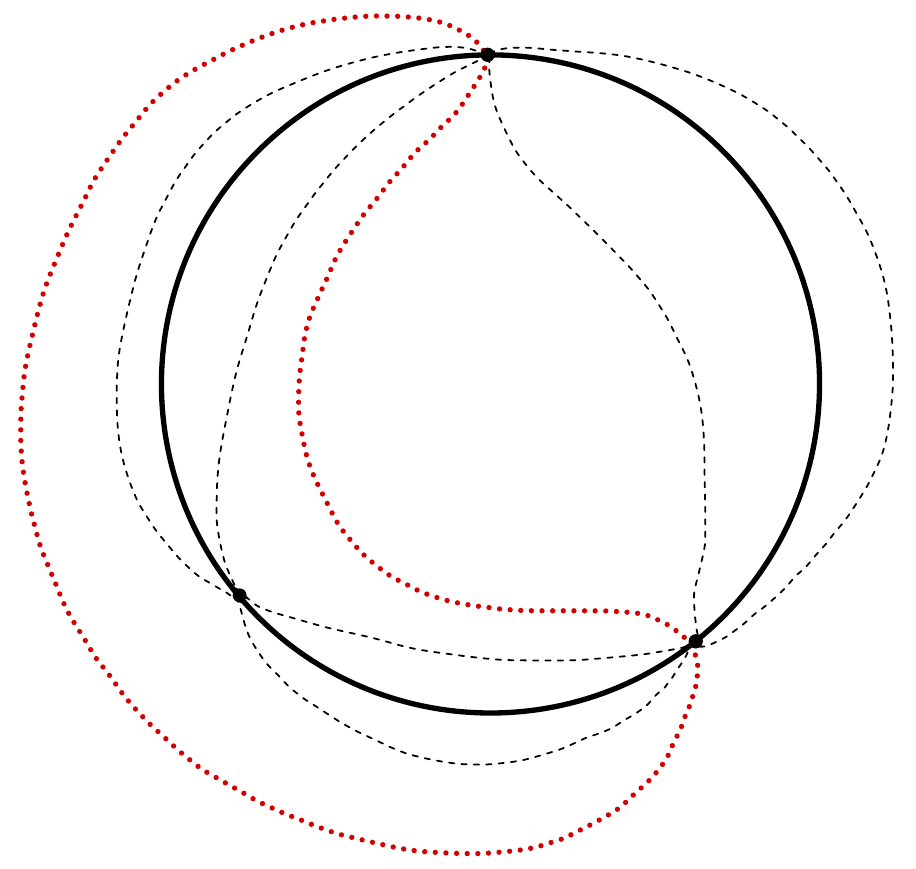}};
        \node at (1.7,1.7) {$U_0$};
        \node at (-1.3,1.6) {$U_1$};
        \node at (0, -1.7) {$U_2$};
        \node at (0.3,2.8) {$a_1$};
        \node at (-1.7, -1.3) {$a_2$};
        \node at (1.9, -1.7) {$a_0$};
        \node at (-3.2, 0) {$V_0$};
    \end{tikzpicture}
    
    \caption{Illustration of condition (2).}
    \label{figure:condition2}
\end{figure}

We now state the main results. See Theorem \ref{theorem:extension_generalization} for more detailed statements.

\begin{theorem}[QC extension]\label{theorem:qc_extension}
    Let $f,g\colon \mathbb S^1\to \mathbb S^1$  be expansive covering maps with the same orientation and $\{a_0,\dots,a_r\}$, $\{b_0,\dots,b_r\}$ be Markov partitions associated to $f,g$, respectively, satisfying conditions \textup{(1)}, \textup{(2)}, and \textup{(3)}. Let $h\colon \mathbb S^1\to \mathbb S^1$ be a homeomorphism that conjugates $f$ to $g$ such that, for each $k\in \{0,\dots,r\}$, $h(a_k)=b_k$ and 
    \begin{enumerate}[label= {\textup{\scriptsize(\textbf{H/P${\rightarrow}$H/P})}}, leftmargin=6em] 
        \item if $a_k$ is symmetrically hyperbolic (resp.\ parabolic), then $b_k$ is symmetrically hyperbolic (resp.\ parabolic).
    \end{enumerate}
    Then the map $h$ has a quasiconformal extension to the unit disk.
\end{theorem}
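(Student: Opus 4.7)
The plan is to reduce the extension problem to a quasisymmetry statement on the circle: once $h\colon \mathbb S^1\to\mathbb S^1$ is shown to be quasisymmetric, any standard QS-to-QC extension (Beurling--Ahlfors or Douady--Earle) produces a QC extension to $\D$. So the real task is to establish quasisymmetry of $h$ from the dynamical hypotheses. (Alternatively, one could try a more direct dynamical construction of the extension $H\colon \overline{\D}\to\overline{\D}$ by building a seed QC extension on a collar of $\mathbb S^1$ and propagating it via the equivariance $H\circ f = g\circ H$; the uniform QC bounds from condition~(3) would then control the dilatation. The two approaches are essentially equivalent.)

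To prove quasisymmetry of $h$, I would analyze the map locally on $\mathbb S^1$. On the interior of each arc $A_k$, the Markov structure combined with expansivity and condition~(3) should yield classical bounded-distortion estimates: preimages under $f^n$ of a fixed test arc form a nested partition of $A_k$ with uniformly controlled quasisymmetric geometry. The conjugacy sends these to analogous partitions on the $g$-side, which also have uniformly controlled geometry; hence $h$ has a local QS modulus on $\inter A_k$.

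At each endpoint $a_k$, the local model comes from hypothesis~(1). In charts provided by the symmetric hyperbolic structure, $f$ near $a_k$ is a uniformly QC version of $z\mapsto\lambda z$; by the H$\to$H matching, so is $g$ near $b_k$, and $h$ is then QS across $a_k$. In the parabolic case, the symmetric parabolic structure produces Fatou-type coordinates at $a_k$ and, by P$\to$P, matching ones at $b_k$; in these coordinates $h$ becomes asymptotically a translation, hence QS across $a_k$. Combining the interior and endpoint analyses gives that $h$ is globally QS on $\mathbb S^1$, and invoking Beurling--Ahlfors closes the argument.

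The main obstacle is the parabolic case. Standard QS arguments for expanding maps rest on uniform hyperbolic expansion, which degenerates at parabolic points, so the symmetric parabolic normal form (Section~\ref{section:hyperbolic_parabolic_points}) must serve as a substitute. The technical crux will be to verify that, when combined with P$\to$P matching, the symmetric structure forces the Fatou-coordinate translation speeds on the two sides of $a_k$ and of $b_k$ to match up to bounded distortion, yielding QS of $h$ across the parabolic point rather than only a David modulus --- the latter being the weaker conclusion to expect if one drops the symmetry or replaces the matching hypothesis by a mere H/P exchange.
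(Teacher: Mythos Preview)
Your high-level strategy matches the paper exactly: reduce to showing that $h$ is quasisymmetric on $\mathbb S^1$, then invoke Beurling--Ahlfors. The paper does precisely this, proving the distortion bound $\diam h(I)\simeq\diam h(J)$ for adjacent arcs $I,J$ of equal length (see \eqref{distortion:qs}).

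However, your proposed implementation of the quasisymmetry estimate has a genuine gap. You invoke ``Fatou-type coordinates'' at parabolic points and argue that $h$ becomes ``asymptotically a translation'' in these coordinates. In the piecewise quasiconformal setting of this paper there are no such coordinates: the maps $f,g$ are not assumed to extend analytically (or even continuously) to neighborhoods of the endpoints $a_k$, and the definitions of hyperbolic/parabolic in Section~\ref{section:hyperbolic_parabolic_points} are \emph{purely metric}, stated only in terms of the decay rates $\diam I_1\simeq n^{-1/N}$, $\diam I_2\simeq n^{-1/N-1}$ of the complementary arcs of $F_n$ adjacent to the point. There is no normal form to pass to.

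What the paper does instead is the following. Given adjacent arcs $I,J$ of equal length, it applies a \emph{quasiconformal elevator} (Lemma~\ref{lemma:newelevator}): an iterate $f^{\circ m}$, uniformly quasisymmetric on $I\cup J$ by conditions~\ref{condition:uv} and~\ref{condition:qs}, carries $I\cup J$ to arcs $I',J'$ sitting near a point $a\in F_1$ and comparable to dynamical arcs of some level $F_n$. The diameters of $I',J'$ and of $h(I'),h(J')$ are then read off directly from Definitions~\ref{definition:hyperbolic} and~\ref{definition:parabolic} via Lemma~\ref{lemma:one_sided_estimates}, with some auxiliary control (Lemmas~\ref{lemma:qs_i2} and~\ref{lemma:qs_i}) to transfer bounded distortion across the conjugacy. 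In the P$\to$P case the computation yields $\diam h(I')/\diam I'\simeq s^{\alpha-\beta}\simeq \diam h(J')/\diam J'$, hence $\diam h(I')\simeq\diam h(J')$; no linearizing chart is used. Your ``interior of $A_k$ versus endpoint'' dichotomy is also replaced by a dichotomy intrinsic to the elevator (alternatives~\ref{lemma:newelevator:1} and~\ref{lemma:newelevator:2}), which depends on whether the blown-up arc straddles a point of $F_1$ or not.
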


Condition {\normalfont\scriptsize(\textbf{H/P${\rightarrow}$H/P})} states that the conjugacy $h$ does not alter the hyperbolic or parabolic nature of points. If we allow the possibility that hyperbolic points are mapped to parabolic ones, then we cannot obtain a quasiconformal extension to the unit disk. Instead, in that case we prove that the conjugacy $h$ has an extension to a David homeomorphism (also known as mapping of exponentially integrable distortion). To prove this result, we need to add a modification of condition (3):
\begin{enumerate}
    \item[(3*)] (Asymptotic conformality) The iterates of $f$ on preimages of $U_k$, $k\in \{0,\dots,r\}$, are $(1+o(1))$-quasiconformal. See condition \ref{condition:qs_strong}.
\end{enumerate}
This condition essentially compensates for the unavailability of Koebe's distortion theorem in the theory of quasiconformal maps. 

\begin{theorem}[David extension]\label{theorem:david_extension}
    Let $f,g,h$ be as in Theorem \ref{theorem:qc_extension} and suppose that $g$ satisfies condition \textup{(3*)}. Suppose that each pair $a_k,b_k$, $k\in \{0,\dots,r\}$, satisfies either condition {\normalfont\scriptsize(\textbf{H/P${\rightarrow}$H/P})} or
\begin{enumerate}[label={\textup{\scriptsize(\textbf{H$\to$P})}}, leftmargin=6em]
\item $a_k$ is symmetrically hyperbolic and $b_k$ is symmetrically parabolic.
\end{enumerate}
Then $h$ has a David extension to the unit disk. 
\end{theorem}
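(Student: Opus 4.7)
The plan is to follow the pullback scheme behind Theorem~\ref{theorem:qc_extension}, replacing the local quasiconformal model at each $H\to P$ pair by an explicit David model, and to use condition~(3*) to ensure that exponential integrability of the distortion survives the iteration. The first step is to build an initial extension $H_0 \colon \bar{\mathbb D} \to \bar{\mathbb D}$ of $h$ that is quasiconformal outside a neighborhood of each parabolic-image point $b_k$ and David inside. At an $H\to P$ pair $(a_k, b_k)$, I would introduce Fatou coordinates on the parabolic side, in which $g$ looks like $w \mapsto w+1$ on a half-plane, and linearizing coordinates on the hyperbolic side, in which $f$ becomes a translation $w \mapsto w + c$ in a logarithmic chart. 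The formal conjugacy between these is linear, and when pulled back to the original geometry it becomes a homeomorphism of a parabolic petal whose distortion grows like $\log(1/\dist(\cdot, b_k))$---the classical David model. Glued to the QC pieces used in the proof of Theorem~\ref{theorem:qc_extension} at the $H\to H$ and $P\to P$ pairs, this yields the desired $H_0$.

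Next, I use the Markov partitions to pull $H_0$ back dynamically: on each preimage piece $f^{-n}(\inter A_j) \cap \mathbb D$ I replace $H_0$ with $g^{-n} \circ H_0 \circ f^n$. Expansiveness of $f$ makes the pieces shrink and the sequence converges uniformly to a homeomorphism $H \colon \bar{\mathbb D} \to \bar{\mathbb D}$ extending $h$, exactly as in Theorem~\ref{theorem:qc_extension}. The core of the argument is then to verify the David estimate
\[
\area\bigl(\{z \in \mathbb D : K_H(z) > t\}\bigr) \le C e^{-\alpha t}.
\]
Decomposing $\mathbb D$ into preimage pieces, on an $n$-th preimage I bound $K_H \le K_{f^n} \cdot K_{H_0} \cdot K_{g^n}$; condition~(3) gives a uniform bound on $K_{f^n}$, while condition~(3*) forces $K_{g^n} = 1 + o(1)$ on the relevant piece. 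The David tail of $H_0$ near each $b_k$ then pulls back with only an $o(1)$ correction to the exponent, and the areas of the preimage pieces decay exponentially in $n$ by expansion; summing over $n$ yields the desired exponential tail for $K_H$.

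The main obstacle is the quantitative matching in this last step: the logarithmic blow-up of $K_{H_0}$ near each parabolic point interacts delicately with the $(1+o(1))$ correction coming from (3*), and one must show that the exponential rate $\alpha$ does not degrade as $n \to \infty$. This is precisely why plain quasiconformality of $g^n$ would not suffice: a fixed multiplicative constant per iterate would compound into an unbounded distortion factor and destroy the David tail, whereas the $(1+o(1))$ hypothesis in (3*) allows the compounded corrections to remain summable. Making this bound uniform over all $k$ and $n$ is the technical heart of the proof.
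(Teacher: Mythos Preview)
Your plan rests on a mistaken premise: the paper does \emph{not} prove Theorem~\ref{theorem:qc_extension} via a dynamical pullback scheme in the disk. Both the quasiconformal and the David extension theorems are proved together (as Theorem~\ref{theorem:extension_generalization}) by working entirely on the circle. The argument is: take adjacent arcs $I,J\subset\mathbb S^1$ of equal length $t$, use the quasiconformal elevator (Lemma~\ref{lemma:newelevator}) to push $I\cup J$ forward by a suitable iterate $f^{\circ m}$ to a large arc near some $a\in F_1$, invoke the hyperbolic/parabolic diameter estimates (Lemmas~\ref{lemma:one_sided_estimates} and~\ref{lemma:qs_i}) to compare $\diam h(I)$ and $\diam h(J)$, and conclude that the scalewise distortion satisfies $\varrho_h(t)\lesssim 1$ in the \ref{HH} case and $\varrho_h(t)\lesssim\log(1/t)$ in the \ref{HP} case. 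The extension to $\mathbb D$ is then a black-box application of Beurling--Ahlfors (quasiconformal) or of Theorem~\ref{theorem:extension_david} (David). Condition~\ref{condition:qs_strong} enters only to control the distortion of the elevator map $g^{\circ m}$ on the $g$-side: one needs it to be $\eta_n$-quasisymmetric with $\eta_n(t)\simeq t$ so that the factor $(n+1)^{K_n}$ stays $\simeq n+1$ rather than blowing up.

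As an independent strategy, your pullback outline has real obstacles. First, for $g^{-n}\circ H_0\circ f^n$ to glue continuously across the boundaries of the Markov pieces you need $H_0$ to already semi-conjugate $f$ to $g$ on those boundaries, which you have not arranged. Second, your reading of \ref{condition:qs_strong} is off: it does not say $K_{g^{\circ n}}=1+o(1)$ as the iterate count grows; it bounds the distortion of $g^{\circ m}$ on $U_w$ in terms of $|u|$ in the canonical splitting $w=(v,u)$, which is a statement about \emph{where} the piece sits relative to $F_1$, not about the iteration depth. Third, the claim that preimage pieces have exponentially decaying area ``by expansion'' fails whenever $f$ itself has parabolic points (allowed under \ref{HH}); near such points the decay is only polynomial, and your David tail estimate as written would not close. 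The paper's circle-side approach sidesteps all of these issues.
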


These two results generalize the main result of \cite{LyubichMerenkovMukherjeeNtalampekos:David} in two directions. First, in \cite{LyubichMerenkovMukherjeeNtalampekos:David} the maps $f,g$ are assumed to be piecewise analytic rather than piecewise quasiconformal. Second, the definition of symmetrically hyperbolic and parabolic points is much less restrictive here (see Section \ref{section:hyperbolic_parabolic_points}) compared to  \cite{LyubichMerenkovMukherjeeNtalampekos:David}. There, $f|_{A_k}$ is required to have an analytic extension in neighborhoods of the endpoints $a_k$ and $a_{k+1}$, so hyperbolicity and parabolicity are defined in the natural way, by studying the power series expansion of $f$. Here we do not assume any extension to neighborhoods of the endpoints, but instead we distinguish between hyperbolic and parabolic points based on how $f$ distorts lengths of circular arcs near the endpoints.

The main motivation for extending the results of \cite{LyubichMerenkovMukherjeeNtalampekos:David} to the piecewise quasiconformal setting is due to an application in the problem of uniformization of gasket Julia sets. In \cite{LN24b} we use the present results to provide a characterization of gasket Julia sets that can be uniformized by a round gasket using a quasiconformal or David homeomorphism of the sphere.

Using Theorem \ref{theorem:qc_extension} we obtain a classification of piecewise quasiconformal circle maps up to quasisymmetries. See Theorem \ref{theorem:classification} and Remark \ref{remark:classification}. We formulate a simplified version here. 

\begin{theorem}[QS classification]\label{theorem:intro_classification}
    Let $f\colon \mathbb S^1\to \mathbb S^1$ be an expansive covering map with a sufficiently fine Markov partition satisfying conditions \textup{(1)}, \textup{(2)}, and \textup{(3)}. Then $f$ is quasisymmetrically conjugate to a piecewise (anti-)M\"obius map of the unit circle. 
\end{theorem}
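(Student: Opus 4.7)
The plan is to produce a piecewise (anti-)M\"obius model $g\colon \mathbb S^1\to \mathbb S^1$ that is combinatorially equivalent to $f$ and has matching endpoint types, build a topological conjugacy $h$ between $f$ and $g$ by a standard coding argument, and apply Theorem \ref{theorem:qc_extension} to the triple $(f,g,h)$. The conjugacy $h$ will then extend to a quasiconformal self-map of the unit disk, and the boundary values of such an extension are automatically quasisymmetric.

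\textbf{Step 1: Construction of the model.} Place points $b_0,\dots,b_r$ on $\mathbb S^1$ in the same cyclic order as $a_0,\dots,a_r$, let $B_k$ be the arc from $b_k$ to $b_{k+1}$, and record the combinatorics of $f$ by writing $f(A_k)=A_{j_1(k)}\cup\dots\cup A_{j_{s_k}(k)}$ as a union of consecutive Markov arcs. Define $g|_{B_k}$ to be a M\"obius self-map of $\mathbb S^1$ (replaced by an anti-M\"obius one if $f$ is orientation reversing) which sends $B_k$ homeomorphically onto $B_{j_1(k)}\cup\dots\cup B_{j_{s_k}(k)}$ with $g(b_k)=b_{j_1(k)}$ and $g(b_{k+1})=b_{j_{s_k}(k)+1}$. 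The group of M\"obius self-maps of $\mathbb S^1$ is three-real-dimensional, so after fixing the endpoint values one real degree of freedom remains on each arc. These free parameters are chosen so that, at each periodic endpoint $b_k$, the multiplier of the first-return iterate of $g$ realizes the type prescribed by $a_k$: strictly greater than $1$ for hyperbolic $a_k$, and equal to $1$ with the appropriate parabolic jet for parabolic $a_k$. This yields a finite nonlinear compatibility system whose solvability is ensured by the sufficiently fine Markov partition.

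\textbf{Step 2: Model satisfies (1), (2), (3); topological conjugacy.} Each $g|_{B_k}$ is the restriction of a global (anti-)M\"obius map of $\widehat{\mathbb C}$, which extends to pairwise disjoint open neighborhoods of $\inter{B_k}$ in $\mathbb C$; this gives condition (2). Such extensions are $1$-quasiconformal, and so all iterates are uniformly $1$-quasiconformal, giving condition (3). The endpoint types of $g$ at the $b_k$ match those of $f$ at the $a_k$ by construction, giving condition (1). Since $f$ and $g$ are expansive circle covering maps of the same degree with combinatorially equivalent Markov partitions under $a_k\leftrightarrow b_k$, a standard symbolic-dynamics argument (matching itineraries of forward orbits through the Markov pieces) produces a unique orientation-preserving homeomorphism $h\colon \mathbb S^1\to \mathbb S^1$ with $h(a_k)=b_k$ and $h\circ f=g\circ h$. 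By construction $h$ preserves the hyperbolic/parabolic type of endpoints, so condition \textup{(\textbf{H/P$\to$H/P})} is automatic.

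\textbf{Step 3: Extension and conclusion.} Theorem \ref{theorem:qc_extension} now applies to $(f,g,h)$ and yields a quasiconformal extension $\widetilde h\colon \overline{\mathbb D}\to \overline{\mathbb D}$. The boundary values of a quasiconformal self-homeomorphism of $\mathbb D$ form a quasisymmetric homeomorphism of $\mathbb S^1$, so $h$ is a quasisymmetric conjugacy between $f$ and the piecewise (anti-)M\"obius map $g$.

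The main obstacle lies in Step 1: building a piecewise (anti-)M\"obius circle map with prescribed combinatorics \emph{and} prescribed multipliers at all $r+1$ periodic endpoints is a nonlinear compatibility problem. The role of the ``sufficiently fine Markov partition'' assumption is precisely to provide enough free parameters (one real parameter per arc $B_k$) and enough combinatorial flexibility to accommodate the endpoint constraints while assembling the individual M\"obius pieces into a single circle covering map of the correct topological degree.
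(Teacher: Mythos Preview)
Your overall strategy matches the paper's: construct a piecewise (anti-)M\"obius model $g$ with the same combinatorics and matching endpoint types, then apply the QC extension theorem to the conjugacy. The main discrepancy is in Step~1, where you describe the multiplier constraints as a ``nonlinear compatibility system'' whose solvability is somehow guaranteed by fineness. The paper's point is sharper: \emph{sufficiently fine} means precisely that each arc $A_k$ has \emph{at most one} periodic endpoint (this can always be arranged by refining). Under that hypothesis the system decouples completely: on each $B_k$ you use the single remaining M\"obius parameter to prescribe the derivative at the unique periodic endpoint (e.g.\ derivative $2$ for hyperbolic, derivative $1$ with repelling direction for parabolic), and the other endpoint, being non-periodic, imposes no constraint. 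Since the periodic point $b_k$ is an endpoint of both $B_{k-1}$ and $B_k$, the same prescription is made from both sides, yielding a \emph{symmetric} multiplier $2^p$ (resp.\ multiplicity $2$) for the first-return map; this is what secures condition~(1) for $g$, via \cite{LyubichMerenkovMukherjeeNtalampekos:David}*{Lemmas 4.17, 4.18}.

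There are two further points you pass over. First, nothing in your construction shows that $g$ is expansive; the paper handles this with a separate criterion (Theorem~\ref{theorem:expansive_criterion}): a covering map with a primitive Markov partition satisfying (2), (3) and expansive at each periodic Markov point is globally expansive. Second, verifying condition~(2) for $g$ requires producing pairwise disjoint neighborhoods $U_k$ with the nesting property $U_j\subset V_k$; the paper does this concretely using disks orthogonal to $\mathbb S^1$ at the endpoints $b_k,b_{k+1}$ (which is why the arcs $B_k$ are chosen with length less than $\pi$), not merely by invoking that M\"obius maps extend globally.
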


\subsection{Blaschke products and mating}
We remark that, in practice, condition (2) is not always easy to verify and the its validity depends on the choice of the Markov partition. Let $B$ be a Blaschke product whose Julia set is the unit circle. We say that $B$ is \textit{hyperbolic} if it has an attracting fixed point in $\D$. Otherwise, $\D$ is the basin of a parabolic fixed point and we say that $B$ is \textit{parabolic}. In \cite{LyubichMerenkovMukherjeeNtalampekos:David}*{Example 4.2}, condition (2) is verified  for a specific Markov partition associated to a certain Blaschke product. We verify condition (2) for essentially every Markov partition associated to any Blaschke product.
\begin{theorem}\label{theorem:blaschke_conditions}
    Let $f\colon \mathbb S^1 \to \mathbb S^1$ be a parabolic (resp.\ hyperbolic) Blaschke product, and $\{a_0,\dots,a_r\}$ be a Markov partition associated to $f$ that contains the parabolic fixed point. Then conditions \textup{(1)}, \textup{(2)}, \textup{(3)}, and \textup{(3*)} are satisfied.
\end{theorem}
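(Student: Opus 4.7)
The plan is to handle the four conditions in order of difficulty. Conditions (3) and (3*) are almost immediate: since $f$ is a finite Blaschke product with Julia set $\mathbb{S}^1$, all of its critical points lie in $\D$ together with their reflections in $\C\setminus\overline{\D}$, so a sufficiently thin complex neighborhood of $\mathbb{S}^1$ avoids the critical set. Once the $U_k$ from condition (2) are chosen inside such a neighborhood, every inverse branch of every iterate $f^{-n}$ on $U_k$ is conformal, hence $1$-quasiconformal and trivially $(1+o(1))$-quasiconformal. For condition (1), I would rely on the classical consequence of the Schwarz--Pick lemma (applied to the univalent inverse branches $f^{-1}\colon\D\to \D'\subsetneq\D$) that $|f'|\ge 1$ on $\mathbb{S}^1$, with equality only at the parabolic fixed point $p$ when $f$ is parabolic. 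Because $f$ is analytic in a full neighborhood of each $a_k$, the one-sided expansions of $f$ from the two adjacent arcs coincide, supplying the symmetry required in Section \ref{section:hyperbolic_parabolic_points}. In the hyperbolic case, every $a_k$ satisfies $|f'(a_k)|>1$ and so is symmetrically hyperbolic; in the parabolic case the same holds for each $a_k\ne p$, while $p=a_{k_0}$ has Taylor expansion $f(z)=z+c(z-p)^{m+1}+\cdots$ making it symmetrically parabolic. The hypothesis that $p$ lies in the Markov partition is used precisely here: if $p$ belonged to the interior of some $A_k$, neither model would apply.

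The heart of the proof is condition (2). I would take $U_k$ to be a tapered tubular neighborhood of $\inter A_k$ shrinking to zero width at $a_k$ and $a_{k+1}$, for instance
\[
U_k := \bigl\{\, z\in\C : \dist(z,\inter A_k) < \epsilon\min(|z-a_k|,|z-a_{k+1}|)\,\bigr\}
\]
for small $\epsilon>0$. These regions are automatically pairwise disjoint, since distinct arcs meet only at shared endpoints where both neighborhoods have vanishing width. A standard compactness argument using injectivity of $f$ on each closed arc $A_k$ together with local injectivity at every $a_k$ (including $p$, where $f'(p)=1\ne 0$) yields global injectivity of $f$ on $U_k$ once $\epsilon$ is small enough. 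The nested containment $U_j\subset V_k=f(U_k)$ whenever $A_j\subset f(A_k)$ then reduces to a width comparison: because $|f'|\ge 1$ on $\mathbb{S}^1$, the image $f(U_k)$ is a neighborhood of $f(A_k)\supset A_j$ of at least the same tapered width as $U_j$.

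The main obstacle is the behavior near the parabolic fixed point $p$, where $|f'(p)|=1$ makes this width comparison borderline and the higher-order asymptotic $f(z)-p=(z-p)+c(z-p)^{m+1}+\cdots$ must be tracked carefully. I would address this by passing to Fatou coordinates in a petal attached to $p$: in these coordinates $f$ becomes the translation $w\mapsto w+1$, and the containment $U_j\subset f(U_k)$ reduces to a translation-invariance statement for suitably chosen half-plane-like regions. Gluing this local Fatou-coordinate construction with the tapered tubular construction elsewhere on $\mathbb{S}^1$ then yields neighborhoods $U_k,V_k$ satisfying condition (2), completing the verification.
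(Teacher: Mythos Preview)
Your treatment of conditions (1), (3), and (3*) is fine and matches the paper's: once condition \ref{condition:uv} is in place, analyticity gives the rest via Remark~\ref{remark:hyp_par_analytic}.

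The gap is in your verification of condition \ref{condition:uv}. The width comparison ``$|f'|\ge 1$ on $\mathbb{S}^1$, so $f(U_k)$ has at least the same tapered width as $U_j$'' does not work, and not only at the parabolic point. Since $f$ is conformal at each $a_k$, it preserves \emph{angles}: the angular aperture of the tapered cone $U_k$ at its vertex $a_k$ equals the aperture of $f(U_k)$ at $f(a_k)=a_j$, regardless of the size of $|f'(a_k)|$. Concretely, pass to the coordinate $\zeta=-i\log z$, in which $\mathbb{S}^1$ becomes the real axis and your $U_k$ near $a_k$ becomes (to first order) the sector $|\Im\zeta|<\epsilon|\zeta|$. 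A repelling fixed point gives $\zeta\mapsto \lambda\zeta+O(\zeta^2)$ with $\lambda>1$, and the linear part maps this sector exactly to itself; the $O(\zeta^2)$ correction can move the boundary either inward or outward, so $U_j\subset f(U_k)$ is at best borderline and in general fails. Because Markov partition points can be periodic, you cannot rescue this by inductively shrinking the aperture $\epsilon$ along orbits.

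The paper sidesteps this entirely. In the hyperbolic case it first produces a \emph{dynamically nested} annulus $A'_{r_0}=f^{-1}(A_{r_0})\subsetneq A_{r_0}$ using the expanding property, and then cuts it by arcs $\mathcal{L}_k$ constructed to satisfy $\mathcal{L}_{j}\subset f(\mathcal{L}_k)$ whenever $f(a_k)=a_j$; these arcs are built inductively at repelling periodic points (via linearization) and pulled back. Both the annulus boundary and the cutting arcs are thus forward-related by construction, which is exactly what forces $U_j\subset V_k$. In the parabolic case the paper replaces the annulus by a hyperbolic $\epsilon$-neighborhood of $\mathbb{S}^1\setminus\{a_0\}$ in $\widehat{\C}\setminus P_f$ and invokes Lemma~\ref{lem:expand} (a Schwarz--Pick argument) to get backward invariance; the parabolic point is handled not by Fatou coordinates but simply by lying on the boundary of the neighborhood. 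If you want to repair your approach, the essential missing ingredient is this: the boundary pieces of $U_k$ must themselves be dynamically related, not just metrically small.
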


We use this in combination with Theorem \ref{theorem:david_extension} to show the next mating result, which can also be obtained using deformation techniques in \cites{HT04, Luo22, Luo23}.

\begin{theorem}[Blaschke mating]\label{theorem:intro_mating}
    Let $f,g$ be hyperbolic or parabolic Blaschke products of the same degree whose Julia set is the unit circle. Then $f$ and $g$ are conformally mateable along any conjugacy $h$ from $f|_{\mathbb S^1}$ to $g|_{\mathbb S^1}$.  
    
    \smallskip
    \noindent
    Specifically, there exist a Jordan curve $J$ with complementary regions $A,B\subset \widehat\C\setminus J$, a rational map $R$, and conformal maps $\phi\colon \overline \D\to \overline A$, $\psi\colon \overline \D\to \overline B$ such that $\phi$ conjugates $f$ to $R$, $\psi$ conjugates $g$ to $R$, and $\phi=\psi\circ h$ on $\mathbb S^1$. Moreover, $J$ is a David circle and $R$ is unique up to M\"obius conjugacies.
\end{theorem}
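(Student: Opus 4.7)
The plan is to apply the preceding extension theorems to promote the boundary conjugacy $h$ to a David homeomorphism of $\overline{\D}$, to assemble a ``formal mating'' map $F$ on the Riemann sphere, and then to integrate an $F$-invariant Beltrami coefficient via the David integrability theorem to produce the rational map $R$.

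First, by Theorem~\ref{theorem:blaschke_conditions}, any Markov partition $\{a_0,\dots,a_r\}$ of $f$ that contains the parabolic fixed point (when $f$ is parabolic) satisfies conditions (1), (2), (3), and (3*), and likewise for the image partition $\{h(a_0),\dots,h(a_r)\}$ of $g$. Since $h$ conjugates $f$ to $g$, it sends parabolic fixed points to parabolic fixed points. When $f$ and $g$ are of the same dynamical type, Theorem~\ref{theorem:qc_extension} produces a quasiconformal extension of $h$ to $\overline{\D}$; when $f$ is hyperbolic and $g$ is parabolic, Theorem~\ref{theorem:david_extension} with the (H$\to$P) hypothesis produces a David extension; when the types are swapped, I apply the theorem to $h^{-1}$ and swap the roles of $f$ and $g$ in the remainder of the argument. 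In every case I obtain a David (possibly quasiconformal) homeomorphism $H\colon\overline{\D}\to\overline{\D}$ extending $h$.

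Second, I construct a formal mating on the sphere. Let $\sigma(z)=1/\bar z$ be the reflection in $\mathbb S^1$, let $\tilde g\colon\widehat\C\to\widehat\C$ denote the rational Schwarz-reflection extension of $g$ (available because $g$ is a Blaschke product), and extend $H$ to a homeomorphism $\hat H\colon\widehat\C\to\widehat\C$ by $\hat H(z)=\sigma(H(\sigma(z)))$ for $z\in\widehat\C\setminus\D$. A local reflection-principle argument shows $\hat H$ is a David homeomorphism of the whole sphere: its distortion on the outer hemisphere is the $\sigma$-pullback of $K_H$, and the two pieces match continuously across $\mathbb S^1$, so exponential integrability is inherited globally. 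Define
\[
F(z) = \begin{cases} f(z), & z\in\overline{\D}, \\ \hat H^{-1}\bigl(\tilde g(\hat H(z))\bigr), & z\in\widehat\C\setminus\overline{\D}. \end{cases}
\]
The relation $h\circ f=g\circ h$ ensures continuity of $F$ across $\mathbb S^1$, and $F$ preserves each hemisphere with topological degree $\deg f=\deg g$. Setting $\mu=0$ on $\overline{\D}$ and $\mu=\mu_{\hat H}$ on $\widehat\C\setminus\overline{\D}$ yields a Beltrami coefficient that is $F$-invariant (because $F$ is holomorphic on $\overline{\D}$ and, on the complement, conjugate via $\hat H$ to the holomorphic map $\tilde g$) and whose distortion is exponentially integrable on the compact surface $\widehat\C$.

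Third, I apply the David integrability theorem to obtain a David homeomorphism $\Phi\colon\widehat\C\to\widehat\C$, normalized by fixing three points, with Beltrami coefficient $\mu$. The conjugated map $R=\Phi\circ F\circ\Phi^{-1}$ has zero Beltrami coefficient almost everywhere, hence is a rational map of degree $\deg f$. Setting $A=\Phi(\D)$, $B=\Phi(\widehat\C\setminus\overline{\D})$, and $J=\Phi(\mathbb S^1)$, the curve $J$ is by construction a David circle. Because $\mu\equiv 0$ on $\overline{\D}$, the restriction $\phi=\Phi|_{\overline{\D}}$ is a conformal map $\overline{\D}\to\overline{A}$ conjugating $f$ to $R$. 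Likewise $\Phi\circ\hat H^{-1}$ is conformal on $\widehat\C\setminus\overline{\D}$ and conjugates $\tilde g$ to $R$; precomposing with the Schwarz reflection that identifies $g$ on $\overline{\D}$ with $\tilde g$ on $\widehat\C\setminus\D$ yields the conformal map $\psi\colon\overline{\D}\to\overline{B}$ conjugating $g$ to $R$ and satisfying $\phi=\psi\circ h$ on $\mathbb S^1$. Uniqueness of $R$ up to Möbius conjugacy follows from a standard rigidity argument: any two such matings produce a homeomorphism of $\widehat\C$ conformal on each of the two complementary Fatou components, and the removability of David circles for such maps forces this homeomorphism to be Möbius.

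The main obstacle is the second step. The key technical issue is the reflection principle needed to verify that $\hat H=\sigma\circ H\circ\sigma$ inherits globally exponentially integrable distortion from $H$, and then checking that the glued Beltrami coefficient $\mu$ is $F$-invariant in the precise measurable sense required by the David integrability theorem. A secondary difficulty is the orientation book-keeping: one must track carefully how the anti-conformal Schwarz reflection $\sigma$ interacts with the identification of the $g$-dynamics on $\overline{\D}$ with its Schwarz reflection $\tilde g$ on $\widehat\C\setminus\D$, so as to guarantee that the resulting map $\psi$ is genuinely conformal (not merely anti-conformal) and realizes the prescribed boundary relation $\phi=\psi\circ h$ on $\mathbb S^1$.
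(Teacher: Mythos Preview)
Your argument has a genuine gap at the very first step. The assertion ``since $h$ conjugates $f$ to $g$, it sends parabolic fixed points to parabolic fixed points'' is false: a topological conjugacy on $\mathbb S^1$ need not respect the hyperbolic/parabolic type of periodic points. The paper's own motivating example (the pine tree Julia set, Figure~\ref{figure:pine}) is precisely the case $f=g=B(z)=\tfrac{2z^3+1}{z^3+2}$, both parabolic, with a conjugacy $h$ that sends the parabolic fixed point $1$ to the repelling fixed point $-1$ and vice versa. In this situation some Markov point exhibits P$\to$H under $h$ \emph{and} some other Markov point exhibits P$\to$H under $h^{-1}$, so neither Theorem~\ref{theorem:qc_extension} nor Theorem~\ref{theorem:david_extension} applies to $h$ or to $h^{-1}$. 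Your proposed swap ``apply the theorem to $h^{-1}$'' does not rescue this case.

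The paper's proof avoids this obstruction by factoring through the power map $P(z)=z^d$. One chooses circle conjugacies $h_1\colon P\to f$ and $h_2\colon P\to g$ with $h_2=h\circ h_1$ on $\mathbb S^1$. Every periodic point of $P$ is hyperbolic, so at each Markov point the pair $(P,f)$ and the pair $(P,g)$ are of type H$\to$H or H$\to$P, and Theorem~\ref{theorem:extension_generalization} yields David extensions of $h_1$ to $\overline\D$ and of $h_2$ to $\widehat\C\setminus\D$. One then pulls back the standard complex structure by $h_1$ on $\D$ and by $h_2$ on $\widehat\C\setminus\overline\D$, integrates once with Theorem~\ref{theorem:integrability_david}, and checks holomorphicity of the resulting $R$ across $J=H(\mathbb S^1)$ via the removability Theorem~\ref{theorem:removable}. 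This sidesteps both your P$\to$H problem and the reflection/orientation bookkeeping you flagged in the second step.
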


An instance of Theorem \ref{theorem:intro_mating} was proved in \cite{LyubichMerenkovMukherjeeNtalampekos:David}*{Example 5.3}, where the Blaschke product $B(z)=\frac{2z^3+1}{z^3+2}$ was mated with itself so that the parabolic point $1$ is mated with the repelling point $-1$ and the repelling point $-1$ is mated with the parabolic point $1$. The Julia set of the resulting rational map is shown in Figure \ref{figure:pine}.

It is elementary to show that the converse of Theorem \ref{theorem:intro_mating} holds. Namely, if $R$ is a rational map whose Julia set is a Jordan curve, then the second iterate $R^{\circ 2}$ arises as the mating, in the sense of Theorem \ref{theorem:intro_mating}, of two Blaschke products of the same degree whose Julia set is the unit circle. Thus, this observation and Theorem \ref{theorem:intro_mating} provide a complete description of Julia sets that are Jordan curves. Moreover, since David circles are conformally removable (in the sense of Theorem \ref{theorem:removable} below) we obtain the following corollary.
\begin{corollary}\label{corollary:jordan_removable}
    All Jordan curve Julia sets are conformally removable. 
\end{corollary}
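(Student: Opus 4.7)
The plan is to leverage Theorem \ref{theorem:intro_mating} together with its converse (stated just above the corollary) and the conformal removability of David circles (Theorem \ref{theorem:removable}).

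Let $R$ be a rational map whose Julia set $J$ is a Jordan curve. The first step is to invoke the elementary converse of Theorem \ref{theorem:intro_mating} to realize the second iterate $R^{\circ 2}$ as a conformal mating of two Blaschke products $f,g$ of the same degree with $\mathbb S^1$ as Julia set, along some conjugacy $h\colon \mathbb S^1\to\mathbb S^1$. Passing to $R^{\circ 2}$ is essential since $R$ may swap the two complementary Fatou components of $J$; its square preserves each of them, so the Riemann maps from $\overline{\D}$ to the closed components conjugate $R^{\circ 2}$ to proper self-maps of the disk (i.e., Blaschke products) whose Julia set on the boundary is all of $\mathbb S^1$.

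Next, I would apply Theorem \ref{theorem:intro_mating} in the forward direction to this triple $(f,g,h)$. It yields a rational map $R'$ whose Julia set is a David Jordan curve $J'$, together with the uniqueness clause asserting that $R'$ and $R^{\circ 2}$ are M\"obius conjugate. Since M\"obius transformations are global conformal diffeomorphisms of the sphere, they map David circles to David circles; hence $J=J(R)=J(R^{\circ 2})$ is itself a David circle. Conformal removability of $J$ then follows directly from Theorem \ref{theorem:removable}.

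The only step with any substance is the converse direction, which the paper explicitly flags as elementary. The thing to verify is that $R^{\circ 2}$ restricted to each Fatou component is, after conjugation by a Riemann map, a Blaschke product of the correct degree with Julia set $\mathbb S^1$. This will follow from standard Fatou--Julia theory: each complementary component is an invariant Jordan domain, its boundary equals the full Julia set $J$, and the restriction of $R^{\circ 2}$ is a proper holomorphic self-map of the domain of degree $\deg(R)^2$, whose boundary extension is the expected factor of $R^{\circ 2}|_J$.
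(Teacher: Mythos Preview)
Your proposal is correct and is essentially the same argument the paper sketches in the paragraph immediately preceding the corollary: use the elementary converse to realize $R^{\circ 2}$ as a mating, apply Theorem \ref{theorem:intro_mating} in the forward direction together with its uniqueness clause to conclude that the Julia set is (a M\"obius image of) a David circle, and then invoke Theorem \ref{theorem:removable}. Your observation that the uniqueness argument is not circular---it uses removability of the David circle produced by the existence part, not of the arbitrary competitor---is the only point that merits checking, and you have it right.
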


\begin{figure}
    \centering
    \includegraphics[scale=0.3]{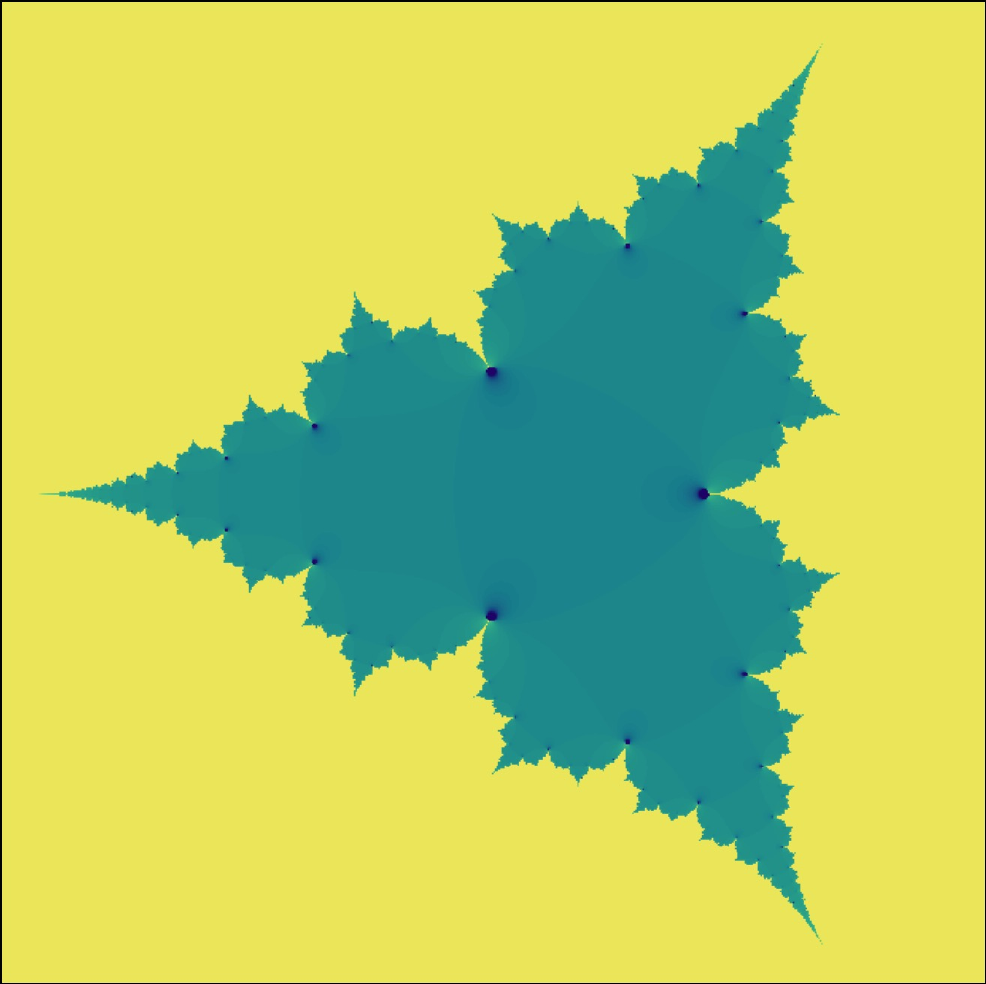}
    \caption{The pine tree Julia set, resulting from mating $B(z)=\frac{2z^3+1}{z^3+2}$ with itself. Cusps are generated because parabolic points are mated with repelling points.}
    \label{figure:pine}
\end{figure}

\subsection{Geometry of parabolic basins}
The Jordan curve $J$ in Theorem \ref{theorem:intro_mating} is the image of the unit circle under a David homeomorphism $H$ of the sphere. If $f$ is hyperbolic and $g$ is parabolic, then the rational map $R$ has a parabolic fixed point with multiplicity $2$. Hence, in that case, the David homeomorphism $H$ maps $\D$ onto a parabolic basin of multiplicity $2$. Note that the inverse of a David homeomorphism is not always a David homeomorphism. In Sections \ref{section:cusp} and \ref{section:qdF}, we study the question whether an arbitrary parabolic basin can be mapped to the unit disk with a David or quasiconformal homeomorphism of the sphere. 
The proof is based on our main extension theorem, Theorem \ref{theorem:extension_generalization}.

\begin{theorem}\label{theorem:basins}
    Let $R$ be a rational map, $a$ be a fixed point with $R'(a) = 1$ that has parabolic multiplicity $\nu\geq 2$, and $\Omega$ be an immediate basin of $a$.
    \begin{enumerate}[label=\normalfont(\arabic*)]
        \item\label{theorem:basins:no_david} If $\nu=2$, then there exists no David homeomorphism $\phi$ of the sphere with $\phi(\Omega)=\D$. 
    \end{enumerate}
        Suppose, in addition, that $\Omega$ is a Jordan region and the critical and post-critical sets intersect $\partial \Omega$ only at the point $a$. 
    \begin{enumerate}[label=\normalfont(\arabic*)]\setcounter{enumi}{1}
        \item\label{theorem:basins:david} If $\nu=2$, then there exists a David homeomorphism $\phi$ of the sphere with $\phi(\D)=\Omega$. 
        \item\label{theorem:basins:qc}  If $\nu\geq 3$, then there exists a quasiconformal homeomorphism $\phi$ of the sphere with $\phi(\D)=\Omega$. 
    \end{enumerate}
\end{theorem}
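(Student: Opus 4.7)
\textbf{Part \ref{theorem:basins:no_david}.} I would argue by contradiction. Suppose $\phi\colon \widehat\C\to\widehat\C$ is a David homeomorphism with $\phi(\Omega)=\D$. Using the Fatou coordinate on $\Omega$ near $a$, one produces a nested sequence of fundamental annuli $A_n\subset\Omega$ accumulating at $a$ with uniformly bounded conformal moduli. Their images $\phi(A_n)$ are annuli in $\D$ accumulating at $\phi(a)\in\mathbb{S}^1$, and the David modulus-distortion inequality keeps $\operatorname{mod}(\phi(A_n))$ uniformly bounded. For $\nu=2$, the parabolic convergence rate $n^{-1/2}$ on the $\Omega$-side, transported through $\phi$ by continuity and its modulus of continuity, forces a geometric configuration on the $\D$-side that is incompatible with nested annuli of bounded modulus shrinking to a boundary point; this yields the contradiction. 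The delicate step is identifying the sharp David distortion estimate together with the right conformal invariant (extremal length or capacity near the cusp).

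\textbf{Parts \ref{theorem:basins:david} and \ref{theorem:basins:qc}.} The plan is to reduce to the main extension theorem via the Riemann map. Let $\pi\colon \D\to\Omega$ be a Riemann map with $\pi(1)=a$. By the hypothesis on critical and post-critical sets, $B := \pi^{-1}\circ R\circ\pi$ is a parabolic Blaschke product of degree $d=\deg(R|_\Omega)$ with Julia set $\mathbb{S}^1$; by Theorem \ref{theorem:blaschke_conditions} it satisfies conditions (1), (2), (3), and (3*). I would compare $B|_{\mathbb{S}^1}$ with the circle restriction of a reference Blaschke product $B_0$ of degree $d$. For part \ref{theorem:basins:qc} ($\nu\geq 3$), take $B_0$ parabolic giving a P$\to$P pairing at the fixed point, and apply Theorem \ref{theorem:qc_extension} to obtain a QC extension $H\colon\overline\D\to\overline\D$ of the conjugacy; for part \ref{theorem:basins:david} ($\nu=2$), take $B_0 = z^d$ so that the fixed-point pairing is H$\to$P, and apply Theorem \ref{theorem:david_extension} to obtain a David extension $H$. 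The composition $\Psi := \pi\circ H\colon\overline\D\to\overline\Omega$ is then QC (respectively David).

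To promote $\Psi$ to a sphere homeomorphism $\Phi\colon\widehat\C\to\widehat\C$ with $\Phi(\D)=\Omega$, I would extend across $\mathbb{S}^1$ using the mating construction from Theorem \ref{theorem:intro_mating} to identify $\widehat\C\setminus\overline\D$ with $\widehat\C\setminus\overline\Omega$ consistently with $\Psi|_{\mathbb{S}^1}$. Since $\mathbb{S}^1$ is removable for both QC and David maps, the resulting $\Phi$ has the correct global regularity. The main obstacle is organizing this outer extension in the QC category for part \ref{theorem:basins:qc}: the QC welding requires the boundary correspondence $\Psi|_{\mathbb{S}^1}$ to be quasisymmetric between $\mathbb{S}^1$ and $\partial\Omega$, which is compatible with the geometry of $\partial\Omega$ at $a$ only when $\nu\geq 3$, and fails for $\nu=2$ in harmony with the non-existence in Part \ref{theorem:basins:no_david}.
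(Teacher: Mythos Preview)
Your sketch for Part~\ref{theorem:basins:no_david} is too vague to evaluate: the ``geometric configuration incompatible with nested annuli of bounded modulus'' is not an argument, and you yourself flag the missing step. The paper proceeds quite differently, proving the stronger statement that $\Omega$ is not an $L^p$-quasidisk for $p\ge 5$. It uses a reflection $g=f^{-1}\circ\Psi\circ f$ in $\partial\Omega$ (built from the hypothetical $L^p$-distortion map $f$), sharp length and area estimates $l(\mathbb I_t)\simeq t^2$, $|U_t|\simeq t^3$ for a family of arcs and regions near the cusp coming from the Fatou coordinate, and a H\"older/length--area inequality in the style of \cite{IOZ21} to force a contradiction.

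For Parts~\ref{theorem:basins:david} and~\ref{theorem:basins:qc} there is a genuine conceptual gap. You compare the interior model $B=\pi^{-1}\circ R\circ\pi$ with an \emph{abstract} reference $B_0$, but $B$ is a parabolic Blaschke product at $1$ \emph{regardless} of $\nu$, so your P$\to$P versus H$\to$P dichotomy does not actually see the difference between $\nu=2$ and $\nu\ge 3$. That distinction is only visible from the \emph{exterior}: if $\psi\colon\widehat\C\setminus\overline\D\to\Omega^*$ is a Riemann map of the complement, then $g=\psi^{-1}\circ R\circ\psi$ (defined near $\mathbb S^1$ by Schwarz reflection) has a parabolic fixed point at $1$ when $\nu\ge 3$ and a repelling one when $\nu=2$. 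The object to which the extension theorem must be applied is the welding homeomorphism $h=\psi^{-1}\circ\phi|_{\mathbb S^1}$, which conjugates the interior model $f$ to the exterior model $g$; the technical work (the paper's Lemma~\ref{lem:expand}) is to verify conditions \ref{condition:uv} and \ref{condition:qs} for $g$, which is \emph{not} a global Blaschke product. With this in hand, for $\nu\ge 3$ the pairing is P$\to$P and $h$ is quasisymmetric, so classical conformal welding yields the global quasiconformal map; for $\nu=2$ the pairing for $h^{-1}$ is H$\to$P, giving a David extension on $\D$, and then the David integrability theorem together with conformal removability of the David circle assembles the global David map.

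Your proposed route through a reference $B_0$ and Theorem~\ref{theorem:intro_mating} does not close: the mating theorem manufactures a rational map and a new Jordan curve, not an extension of a prescribed boundary correspondence to the specific exterior $\Omega^*$. Even granting a quasiconformal $H\colon\overline\D\to\overline\D$, the composite $\pi\circ H$ gives nothing beyond the Riemann map itself toward a \emph{global} homeomorphism, since the issue is precisely whether $\partial\D\to\partial\Omega$ extends across to $\Omega^*$---and that is governed by $h=\psi^{-1}\circ\phi$, not by any map involving $B_0$.
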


We remark that in the last part  the condition that the post-critical set intersects $\partial \Omega$ only at the parabolic point $a$ is necessary. Consider the example of a geometrically finite polynomial $f(z) = z-\frac{1}{b(3b-2)}(z-1)^3(z-b)^2$, where $b=-1-\sqrt{2/3}$. It has two parabolic fixed points at $a=1$ with multiplicity $3$ and at $b$ with multiplicity $2$. There is a Fatou component $\Omega$ that is an immediate basin of $a$ such that $b\in \partial \Omega$. The component $\Omega$ is not a quasidisk or a John domain as it has an outward cusp at $b$; see Figure \ref{figure:non_qc_disk}. 
We remark that this condition follows from condition (6.2) in \cite{CJY}*{Theorem 6.1}, which proves an analogue of \ref{theorem:basins:qc} for the polynomial case.

\begin{figure}
    \centering
    \includegraphics[scale=0.4]{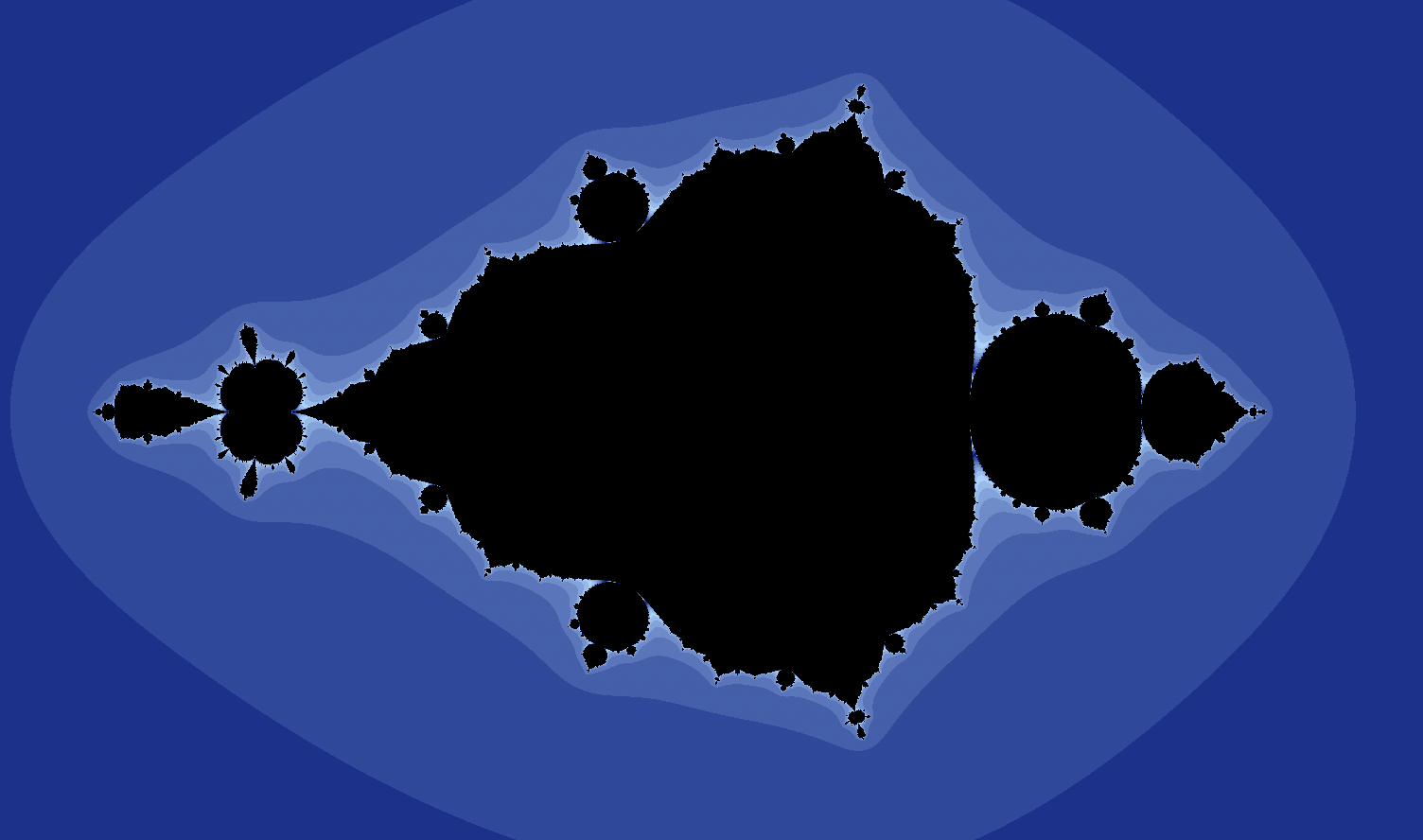}
    \caption{The Julia set of the polynomial $$f(z) = z-\frac{1}{b(3b-2)}(z-1)^3(z-b)^2.$$ The parabolic basin in the center is not a quasidisk.}
    \label{figure:non_qc_disk}
\end{figure}

\section{Preliminaries}

\subsection{Quasiconformal and quasisymmetric maps}

Let $(X,d_X),(Y,d_Y)$ be metric spaces. A homeomorphism $f\colon X\to Y$ is \textit{quasisymmetric} if there exists a homeomorphism $\eta\colon [0,\infty)\to[0,\infty)$ such that for all triples of distinct points $x_1,x_2,x_3\in X$ we have
\begin{align*}
    \frac{d_Y(f(x_1),f(x_2))}{d_Y(f(x_1),f(x_3))} \leq \eta\left( \frac{d_X(x_1,x_2)}{d_X(x_1,x_3)}\right).
\end{align*}
In that case we say that $f$ is $\eta$-quasisymmetric. The homeomorphism $\eta$ is called the \textit{distortion function} associated to $f$. It is an elementary property (see \cite{Heinonen:metric}*{Proposition 10.8}) that if $A\subset B\subset X$ and $0<\diam A\leq \diam B <\infty$, then
\begin{align}\label{quasisymmetry}
\frac{1}{2\eta\left(\frac{\diam B}{\diam A}\right)} \leq    \frac{\diam f(A)}{\diam f(B)}\leq \eta \left(\frac{2\diam A}{\diam B} \right).
\end{align}

Let $U,V\subset \C$ be open sets and $f\colon U\to V$ be an orientation-preserving homeomorphism. We say that $f$ is \textit{quasiconformal} if $f\in W^{1,2}_{\loc}(U)$ and there exists $K\geq 1$ such that $\|Df(z)\|^2 \leq K J_f(z)$ for a.e.\ $z\in U$. Here $Df(z)$ is the differential of $f$, which exists at a.e.\ $z\in U$, $\|Df(z)\|$ denotes its operator norm, and $J_f(z)$ is the Jacobian determinant of $Df(z)$. In that case, we say that $f$ is $K$-quasiconformal. One can define quasiconformal homeomorphisms between domains in the Riemann sphere $\widehat \C$ via local coordinates.   

It is a fundamental result that quasisymmetric maps between planar domains are also quasiconformal, quantitatively. We will need a refined version of the converse of that statement, as stated below. Actually, this is a sharp version of Koebe's distortion theorem for quasiconformal maps.

\begin{theorem}[Distortion theorem]\label{theorem:qcqs}
    Let $K\geq 1$, $\delta>0$, and $U,V\subset \C$ be topological disks with $\bar U\subset V$ and $\mod(V\setminus \bar U)\geq \delta$. If  $f\colon V\to \C$ is a $K$-quasiconformal embedding, then  $f|_{U}$ and $f^{-1}|_{f(U)}$ are $\eta$-quasisymmetric for
    $$\eta(t)= C(K,\delta) \max\{t^K,t^{1/K}\}.$$
\end{theorem}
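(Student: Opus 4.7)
The plan is to deduce this distortion theorem from the action of $K$-quasiconformal maps on moduli of ring domains, extracting the Hölder exponents $K$ and $1/K$ via a comparison between a round annulus on the domain side and the Teichmüller extremal ring on the image side.

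First I would reduce to a $K$-quasiconformal self-map of $\D$. Since $V$ and $f(V)$ are both topological disks (the latter because $f$ is a qc embedding on a simply connected domain), choose Riemann maps $\varphi\colon \D\to V$ and $\psi\colon \D\to f(V)$ and set $\tilde f := \psi^{-1}\circ f\circ \varphi\colon \D\to \D$, a $K$-quasiconformal self-map of $\D$. Conformal invariance of modulus and the fact that $K$-qc maps distort modulus by at most a factor of $K$ give $\mod(\D\setminus \overline{\varphi^{-1}(U)})\geq \delta$ and $\mod(\D\setminus \overline{\psi^{-1}(f(U))})\geq \delta/K$; applying the Grötzsch modulus bound to control eccentricity, after postcomposing $\varphi,\psi$ with suitable Möbius automorphisms of $\D$ one can assume both sets are contained in $\{|z|\leq r_0\}$ for some $r_0=r_0(K,\delta)<1$. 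By Koebe's distortion theorem, $\varphi$ and $\psi$ are bi-Lipschitz on $\{|z|\leq r_0\}$ with constants depending only on $K$ and $\delta$, and composition with bi-Lipschitz maps preserves any distortion function of the form $C\max\{t^K,t^{1/K}\}$, so it suffices to prove the estimate for $\tilde f$.

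The core estimate goes as follows. For distinct $x_1,x_2,x_3\in \varphi^{-1}(U)$, pre- and postcompose $\tilde f$ with the Möbius automorphisms of $\D$ sending $x_1$ and $\tilde f(x_1)$ to $0$, producing a $K$-qc self-map $g\colon \D\to \D$ with $g(0)=0$. These Möbius normalizations are bi-Lipschitz on the relevant compacta with constants depending only on $K,\delta$, so Euclidean distance ratios are preserved up to bounded factors. We are reduced to bounding $|g(y_2)|/|g(y_3)|$ in terms of $t:=|y_2|/|y_3|$, where $y_2,y_3$ lie in a compact subdisk of $\D$. Assume first $t\leq 1$. The round annulus $\Omega=\{|y_2|<|z|<|y_3|\}$ is a ring in $\D$ of modulus $\tfrac{1}{2\pi}\log(1/t)$, separating $\{0,y_2\}$ from $\{y_3\}\cup\partial\D$. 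Hence $g(\Omega)$ has modulus at least $\tfrac{1}{2\pi K}\log(1/t)$ and separates $\{0,g(y_2)\}$ from $\{g(y_3)\}\cup\partial\D$ in $\D$. Viewing $g(\Omega)$ as a ring in $\C$ whose outer complementary component contains $\{g(y_3),\infty\}$, Teichmüller's extremal ring theorem bounds its modulus above by $\tfrac{1}{2\pi}\log(C_0/s')+O(1)$, where $s':=|g(y_2)|/|g(y_3)|$. Combining the two bounds yields $s'\leq C(K,\delta)\,t^{1/K}$. The case $t\geq 1$ follows by swapping $y_2$ and $y_3$, giving the bound $t^K$. The estimate for $f^{-1}$ follows by applying the same argument to $\tilde f^{-1}$, using the modulus gap $\delta/K$ in place of $\delta$.

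The main technical point is bookkeeping: checking that the distortion constants introduced by Koebe's theorem, the Möbius-normalization bi-Lipschitz bounds on bounded compacta, and the $O(1)$ term in the Teichmüller asymptotic all depend only on $K$ and $\delta$. The geometric heart of the argument — the $K$-distortion of modulus combined with Teichmüller's extremal ring — is entirely classical.
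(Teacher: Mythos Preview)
Your route differs from the paper's in a meaningful way. The paper first uniformizes $V$ by a Riemann map $\phi\colon\D\to V$, uses the Gr\"otzsch bound to get $\phi^{-1}(U)\subset B(0,r)$, and then invokes a factorization lemma \cite{AstalaIwaniecMartin:quasiconformal}*{Lemma 3.6.1} to write $f\circ\phi=\psi\circ g$ on a slightly larger ball, where $\psi$ is conformal and $g\colon\C\to\C$ is a \emph{global} $K$-quasiconformal map. Koebe's theorem handles $\phi$ and $\psi$, and the sharp quasisymmetry of $g$ with $\eta(t)=C(K)\max\{t^K,t^{1/K}\}$ is then quoted from \cite{AstalaIwaniecMartin:quasiconformal}*{Corollary~3.10.4}. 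So the paper never touches Teichm\"uller rings directly; it outsources that step. Your argument instead keeps everything inside $\D$ and tries to prove the sharp quasisymmetry by a single modulus-and-Teichm\"uller estimate. That is a legitimate strategy and, done carefully, is essentially how the quoted corollary is proved; but it is more work than you indicate.

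There is a genuine gap in your core estimate. For $t\leq 1$ your annulus argument correctly yields $s'\leq C\,t^{1/K}$. However, for $t\geq 1$ the claim ``swapping $y_2$ and $y_3$ gives the bound $t^K$'' is wrong: swapping turns $(t,s')$ into $(1/t,1/s')$, and applying your $t\leq 1$ bound gives $1/s'\leq C\,(1/t)^{1/K}$, i.e.\ the \emph{lower} bound $s'\geq C^{-1}t^{1/K}$, not the required upper bound $s'\leq C\,t^{K}$. The correct move is to apply the $t\leq 1$ case to the inverse map $g^{-1}$ (also $K$-quasiconformal): when $s'\geq 1$ this yields $1/t\leq C(1/s')^{1/K}$, hence $s'\leq C^K t^K$; when $s'\leq 1\leq t$ the bound $s'\leq t^K$ is trivial. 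You should also note that the Teichm\"uller asymptotic $\tfrac{1}{2\pi}\log(C_0/s')$ only controls $s'$ when the modulus is large; for $t$ bounded away from $0$ one needs a separate (easy) argument, but this must be said. With these fixes your approach goes through; as written, the $t\geq 1$ step does not.
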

Here, if $A\subset \C$ is an annulus we denote by $\mod A$ its modulus, i.e., the quantity $(2\pi)^{-1}\log \frac{1}{r}$, where $r\in (0,1)$ is the unique number with the property that $\D\setminus \bar B(0,r)$ is conformally equivalent to $A$.   

\begin{proof}
    Consider a conformal map $\phi\colon  \D \to V$. Since $\mod(V\setminus \bar {U})\geq \delta$, we have $\phi^{-1}(U)\subset B(0,r)$ for some $r=r(\delta)\in (0,1)$, as a consequence of the Gr\"otzsch modulus theorem \cite{LehtoVirtanen:quasiconformal}*{p.~54}. Let $r'=(1+r)/2\in (r,1)$. We now apply \cite{AstalaIwaniecMartin:quasiconformal}*{Lemma 3.6.1}, which implies that there exists a conformal map $\psi$ on $B(0,r')$ and a $K$-quasiconformal map $g\colon \C\to \C$ such that $g(B(0,r'))=B(0,r')$ and $f\circ \phi= \psi\circ g$ on $B(0,r')$.  By Koebe's distortion theorem, the map $\phi|_{B(0,r)}$ satisfies 
    $$C(\delta)^{-1} \frac{|z_1-z_2|}{|z_1-z_3|}\leq \frac{|\phi(z_1)-\phi(z_2)|}{|\phi(z_1)-\phi(z_3)|}\leq C(\delta) \frac{|z_1-z_2|}{|z_1-z_3|}$$
    for all triples of distinct points $z_1,z_2,z_3\in B(0,r)$. The map $\psi|_{B(0,r)}$ also satisfies this condition; see \cite{AstalaIwaniecMartin:quasiconformal}*{Theorem 2.10.9}. Also, by \cite{AstalaIwaniecMartin:quasiconformal}*{Corollary 3.10.4}, $g$ is $\eta''$-quasisymmetric on $\C$ for 
    $$\eta''(t)=C(K) \max\{t^K,t^{1/K}\}.$$
    The desired conclusion for $f= \psi\circ g\circ \phi^{-1}$, restricted to $U$, follows. For $f^{-1}|_{f(U)}$ it suffices to note that if a map is $\eta$-quasisymmetric, then its inverse is $\widetilde \eta$-quasi\-symmetric for $\widetilde \eta(t)=1/\eta^{-1}(t^{-1})$.
\end{proof}
 
\subsection{David maps}
An orientation-preserving homeomorphism $H\colon U\to V$ between domains in the Riemann sphere $\widehat{\C}$ is called a \textit{David homeomorphism} or else \textit{mapping of exponentially integrable distortion}, if it lies in the Sobolev space $W^{1,1}_{\loc}(U)$ (defined via local coordinates) and there exists $p>0$ such that
\begin{align}\label{exp_integral}
\int_U \exp(p K_H) \,d\sigma <\infty,
\end{align}
where $\sigma$ is the spherical measure, and $K_H$ is the distortion function of $H$, given by
\begin{align*}
K_H(z)=\frac{1+|\mu_H|}{1-|\mu_H|};
\end{align*}
here 
\begin{align*}
\mu_H= \frac{\partial H/ \partial \bar z}{\partial H/\partial z}
\end{align*}
is the Beltrami coefficient of $H$. Condition \eqref{exp_integral} is equivalent to the existence of constants $C,\alpha>0$ such that
$$\sigma(\{z\in U: |\mu_H(z)|>1-\varepsilon\}) \leq C e^{-\alpha/\varepsilon}$$
for all $\varepsilon \in (0,1)$. We direct the reader to \cite{AstalaIwaniecMartin:quasiconformal}*{Chapter 20} for more background.  

The main result in the theory of David homeomorphisms is the following integrability theorem. If $U$ is an open subset of $\widehat {\C}$ and $\mu \colon U \to \D$ is a measurable function such that $K=(1+|\mu|)/(1-|\mu|)$ is exponentially integrable in $U$ (i.e., it satisfies \eqref{exp_integral}), then we say that $\mu$ is a \textit{David coefficient (in $U$)}.
\begin{theorem}[David integrability; \cite{AstalaIwaniecMartin:quasiconformal}*{Theorem~20.6.2}]\label{theorem:integrability_david}
Let $\mu\colon \widehat{\C} \to \D$ be a David coefficient.  Then there exists a homeomorphism $H\colon \widehat{\C} \to \widehat{\C}$ of class $ W^{1,1}(\widehat \C)$ that solves the Beltrami equation
\begin{align}\label{beltrami_equation}
\frac{\partial H}{\partial \bar z}= \mu \frac{\partial H}{\partial z}.
\end{align}
Moreover, $H$ is unique up to postcomposition with M\"obius transformations.
\end{theorem}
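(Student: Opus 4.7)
The plan is to follow the standard approximation scheme from the measurable Riemann mapping theorem, with the exponential integrability of the distortion function playing the role that the $L^\infty$ bound plays in the quasiconformal case. Set $\mu_n = \mu \cdot \chi_{\{|\mu|\le 1 - 1/n\}}$, so $\|\mu_n\|_\infty < 1$. By the classical Ahlfors--Bers theorem, for each $n$ there is a unique quasiconformal homeomorphism $H_n\colon \widehat\C\to\widehat\C$ solving $\partial H_n/\partial \bar z = \mu_n \partial H_n/\partial z$ and fixing $0,1,\infty$. The pointwise bound $K_{H_n} \le K_\mu$ ensures that \eqref{exp_integral} holds with constants uniform in $n$.

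The main analytic input is to extract a uniform (logarithmic) modulus of continuity for the family $\{H_n\}$. The standard route uses modulus inequalities for rings: one shows that for any annular region $R$ the conformal modulus of $H_n(R)$ is controlled from below by a function of $\mod R$ depending only on $p$ and the integrability constant for $\exp(pK_\mu)$. When $R$ is a round annulus around a point $z_0$, this translates via Teichm\"uller-type estimates into a bound of the form $|H_n(z)-H_n(w)|\le C (\log(1/|z-w|))^{-\alpha}$ for some $\alpha>0$. Together with the normalization (which prevents image collapse), Arzel\`a--Ascoli yields a locally uniform limit $H$ along a subsequence. The analogous estimate applied to the inverses $H_n^{-1}$, whose distortion is exponentially integrable with respect to area measure on the image side, gives identical control on $\{H_n^{-1}\}$ and forces $H$ to be a homeomorphism.

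For the Sobolev regularity, exponential integrability of $K_{H_n}$ combined with area distortion estimates places $\|DH_n\|$ uniformly in an Orlicz space of $L\log L$ type on compact sets, which embeds into $L^1_{\loc}$. Hence $H_n\to H$ weakly in $W^{1,1}_{\loc}$, and one passes to the limit in the distributional Beltrami equation using a.e.\ convergence $\mu_n\to \mu$ together with strong convergence of $\mu_n \partial H_n/\partial z$ on the sub-level sets $\{K_\mu\le M\}$ (with $M\to\infty$). This yields $H\in W^{1,1}(\widehat\C)$ solving \eqref{beltrami_equation}.

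For uniqueness, suppose $H_1,H_2$ both satisfy the theorem. Then $\Phi = H_2\circ H_1^{-1}$ has Beltrami coefficient $0$ almost everywhere. The subtle point is that $W^{1,1}$ is not stable under composition with David maps in general, so to invoke Weyl's lemma one needs that David homeomorphisms satisfy Lusin's condition $(N)$ and that their inverses lie in $W^{1,2}_{\loc}$; these facts guarantee $\Phi\in W^{1,1}_{\loc}$ with $\partial_{\bar z}\Phi = 0$ distributionally, so $\Phi$ is holomorphic on $\widehat\C$ and hence a M\"obius transformation. The main obstacle throughout is the second paragraph: Koebe's distortion theorem and the polynomial H\"older estimates from classical quasiconformal theory are unavailable, and one must replace them with the sharp logarithmic modulus of continuity afforded by exponential integrability. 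This is precisely the same technical difficulty that motivates condition \textup{(3*)} elsewhere in the paper.
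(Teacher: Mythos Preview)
The paper does not prove this theorem. It is quoted verbatim from \cite{AstalaIwaniecMartin:quasiconformal}*{Theorem~20.6.2} as a preliminary result and is used as a black box in Sections~\ref{section:qdF} and~\ref{section:blaschke}. There is therefore no proof in the paper to compare your proposal against.

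That said, your outline is a faithful sketch of the standard argument (truncate $\mu$, solve by Ahlfors--Bers, extract a limit via a logarithmic modulus-of-continuity estimate coming from exponential integrability, pass to the limit in $W^{1,1}$, and handle uniqueness via Stoilow). One small inaccuracy: your final sentence draws a parallel with condition~\ref{condition:qs_strong} in the paper, but that condition addresses a different issue. Condition~\ref{condition:qs_strong} controls the quasiconformal distortion of \emph{iterates} of a circle map so that the quasisymmetric distortion function $\eta_n$ in Lemma~\ref{lemma:newelevator} is close enough to linear to absorb a factor of $n$ in the David estimate; it has nothing to do with the existence theory for the Beltrami equation. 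The unavailability of Koebe's theorem referred to in the introduction concerns distortion of long compositions of quasiconformal maps in the dynamical argument, not the integrability theorem itself.
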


The David Integrability Theorem is a generalization of the Measurable Riemann Mapping Theorem \cite{AstalaIwaniecMartin:quasiconformal}*{Theorem 5.3.4, p.~170}, which states that if $\|\mu\|_\infty<1$, then there exists a quasiconformal homeomorphism  $H\colon \widehat{\C} \to \widehat{\C}$ that solves the Beltrami equation \eqref{beltrami_equation}.

\begin{theorem}[Uniqueness; \cite{AstalaIwaniecMartin:quasiconformal}*{Theorem~20.4.19}]\label{theorem:stoilow}
Let $\Omega\subset \widehat{\C}$ be an open set and $f,g\colon \Omega\to \widehat{\C}$ be David maps with
\begin{align*}
\mu_f=\mu_g
\end{align*}
almost everywhere. Then $f\circ g^{-1}$ is a conformal map on $g(\Omega)$.  
\end{theorem}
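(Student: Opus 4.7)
The plan is to reduce the statement to the fact that a David homeomorphism with vanishing Beltrami coefficient is conformal (a Weyl-type lemma), using the David integrability theorem to produce a common ``uniformizing'' map for $f$ and $g$.

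First, I would apply Theorem \ref{theorem:integrability_david} to the common Beltrami coefficient $\mu := \mu_f = \mu_g$ (extended by zero to $\widehat{\C}$, thinking of $\Omega$ after a localization via coordinate charts). This yields a David homeomorphism $H \colon \widehat{\C} \to \widehat{\C}$ with $\mu_H = \mu$ a.e. Set $F := f \circ H^{-1}$ and $G := g \circ H^{-1}$ on $H(\Omega)$; both are homeomorphisms onto their images.

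Next, I would argue that $F$ and $G$ are conformal. Formally, the chain rule for Beltrami coefficients gives
\begin{align*}
\mu_{F}(H(z)) \;=\; \frac{\mu_f(z) - \mu_H(z)}{1 - \overline{\mu_H(z)}\,\mu_f(z)} \cdot \frac{\partial_z H(z)}{\overline{\partial_z H(z)}}\,\frac{|\partial_z H(z)|}{|\partial_z H(z)|} \;=\; 0,
\end{align*}
since $\mu_f = \mu_H$ a.e., and similarly for $G$. Once this is justified, $F$ and $G$ lie in $W^{1,1}_{\loc}$ and satisfy $\partial_{\bar w} F = 0$ a.e.\ as distributions; by Weyl's lemma they are holomorphic, hence conformal (being homeomorphisms). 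Then $f \circ g^{-1} = F \circ G^{-1}$ on $g(\Omega)$ is a composition of conformal homeomorphisms, which gives the conclusion.

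The main obstacle is the justification of the chain-rule computation above, because $H$ and $H^{-1}$ only lie in $W^{1,1}_{\loc}$ rather than $W^{1,2}_{\loc}$, so the classical quasiconformal formulas do not directly apply. What is needed are the absolute continuity properties of David homeomorphisms established in \cite{AstalaIwaniecMartin:quasiconformal}*{Chapter 20}: namely, that both $H$ and $H^{-1}$ satisfy Lusin's condition (N), that the pointwise Jacobian is integrable, and that the area formula / change of variables holds. These properties ensure that the Beltrami coefficient of the composition is well-defined a.e.\ and is given by the formal expression above, and also that $F$ inherits $W^{1,1}_{\loc}$ regularity from $f$ and $H^{-1}$. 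With these ingredients in place, the argument closes cleanly; without them, the composition of two David maps is not a priori even an ACL map, which is the one subtle point that distinguishes this David-regularity statement from its classical quasiconformal analogue.
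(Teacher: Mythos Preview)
The paper does not prove this theorem; it is quoted verbatim from \cite{AstalaIwaniecMartin:quasiconformal}*{Theorem~20.4.19} without argument, so there is no proof in the paper to compare against.

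Regarding your outline: the strategy is broadly right, but the detour through the integrability theorem is both unnecessary and slightly hazardous. Note that Theorem~\ref{theorem:integrability_david} as stated already contains the uniqueness clause (``unique up to M\"obius transformations''), which is essentially equivalent to the statement you are trying to prove; invoking the full theorem risks circularity. You only use the existence part, so strictly speaking there is no logical loop, but then the auxiliary map $H$ buys you nothing: the analytic difficulty you identify --- showing that $F=f\circ H^{-1}$ lies in $W^{1,1}_{\loc}$ and that the chain rule is valid --- is exactly the same difficulty as showing directly that $f\circ g^{-1}\in W^{1,1}_{\loc}$ with vanishing Beltrami coefficient. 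The introduction of $H$ does not reduce the problem.

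The actual argument in \cite{AstalaIwaniecMartin:quasiconformal} proceeds more directly. The key regularity input (proved earlier in Chapter~20) is that the inverse of a David homeomorphism lies in $W^{1,2}_{\loc}$, even though it need not be a David map itself. This higher integrability of $g^{-1}$, combined with the fact that $g$ satisfies Lusin's condition~(N) and the change-of-variables formula, is what allows one to conclude that $f\circ g^{-1}$ is in $W^{1,1}_{\loc}$ and satisfies the Cauchy--Riemann equations in the distributional sense; Weyl's lemma then finishes. Your sketch gestures at these ingredients but does not isolate the $W^{1,2}_{\loc}$ regularity of the inverse, which is the decisive fact.
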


\begin{proposition}[\cite{LyubichMerenkovMukherjeeNtalampekos:David}*{Proposition 2.5}]\label{prop:david_qc_invariance}
Let $f\colon U\to V$ be a David homeomorphism between open sets $U,V\subset \widehat {\C}$ and $g\colon V\to \widehat \C$ be a quasiconformal embedding. Then $g\circ f$ is a David map.
\end{proposition}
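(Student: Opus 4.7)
The plan is to control the distortion function of the composition using a pointwise chain rule and reduce exponential integrability of $K_{g \circ f}$ to that of $K_f$. Since $g$ is $K$-quasiconformal, it lies in $W^{1,2}_{\loc}$, is differentiable almost everywhere, and satisfies $\|Dg(w)\|^2 \leq K J_g(w)$ a.e. The David map $f$ lies in $W^{1,1}_{\loc}$, is differentiable a.e., and has $\|Df(z)\|^2 \leq K_f(z) J_f(z)$ a.e. At almost every $z \in U$ where both $f$ is differentiable at $z$ and $g$ is differentiable at $f(z)$, the chain rule gives $D(g\circ f)(z) = Dg(f(z))\, Df(z)$, whence
\[
\|D(g\circ f)(z)\|^2 \leq \|Dg(f(z))\|^2\, \|Df(z)\|^2 \leq K\, J_g(f(z))\, K_f(z)\, J_f(z) = K\, K_f(z)\, J_{g\circ f}(z).
\]
Consequently $K_{g\circ f}(z) \leq K\, K_f(z)$ a.e., so if $K_f$ satisfies \eqref{exp_integral} with constant $p$, then $K_{g\circ f}$ satisfies \eqref{exp_integral} with constant $p/K$ (up to the bounded change of spherical density under $g$, which only affects constants).

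Next I would verify that $g\circ f \in W^{1,1}_{\loc}(U)$. Here one argues that $|D(g\circ f)| \in L^1_{\loc}$: the bound $\|D(g\circ f)(z)\| \leq \|Dg(f(z))\|\cdot \|Df(z)\|$ combined with Cauchy--Schwarz gives
\[
\int_{U'} \|D(g\circ f)\|\,d\sigma \leq \Bigl(\int_{U'} \|Dg(f(z))\|^2\,d\sigma\Bigr)^{1/2} \Bigl(\int_{U'} \|Df(z)\|^2\,d\sigma\Bigr)^{1/2}
\]
on relatively compact $U' \Subset U$. The first factor is finite because, by the area formula (valid since $f$ has the Lusin (N) property as a David map), it equals $\int_{f(U')} \|Dg(w)\|^2 J_f(f^{-1}(w))^{-1} \,d\sigma$, which is bounded using the $K$-quasiconformality of $g$; or more cleanly one estimates $\|Dg(f(z))\|^2 \leq K J_g(f(z))$ and uses the area formula directly to get $\int_{U'} K_f J_g \circ f \, d\sigma \leq \int_{U'} (K_f + e^{K_f})\,d\sigma \cdot \|J_g\|_\infty$ locally, once we pass through a spherical chart where $g$ is bounded.

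The main obstacles are the measure-theoretic ones: justifying the pointwise chain rule almost everywhere, and ruling out that the exceptional set where $g$ fails to be differentiable pulls back under $f$ to a set of positive measure. These are handled by the fact that both $g$ (quasiconformal) and $f$ (David) satisfy the Lusin (N) and (N$^{-1}$) properties, so null sets in $V$ pull back to null sets in $U$; this is standard for quasiconformal $g$, and for $f$ it follows from $J_f > 0$ a.e.\ together with $f \in W^{1,1}_{\loc}$ being a homeomorphism in the David class. Once the a.e.\ chain rule is in hand, the remainder of the argument is the algebraic estimate $K_{g\circ f} \leq K \cdot K_f$ above, which immediately transfers exponential integrability with a rescaled exponent, completing the verification that $g \circ f$ is a David map.
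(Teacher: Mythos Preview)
The paper does not give its own proof of this proposition; it is quoted verbatim as \cite{LyubichMerenkovMukherjeeNtalampekos:David}*{Proposition 2.5} without argument, so there is nothing in the present paper to compare against. Your overall strategy---establish the pointwise bound $K_{g\circ f}\le K\,K_f$ via the chain rule and then transfer exponential integrability---is the standard and correct one, and the measure-theoretic justification (that $f$ has property (N$^{-1}$) because $J_f>0$ a.e.\ for David maps) is the right ingredient.

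There is, however, a genuine gap in your verification that $g\circ f\in W^{1,1}_{\loc}$. Your Cauchy--Schwarz estimate requires $\int_{U'}\|Df\|^2<\infty$, but a David map lies only in $W^{1,1}_{\loc}$, not $W^{1,2}_{\loc}$ in general, so that factor need not be finite. The cleaner route is to use the distortion inequality directly: from $\|D(g\circ f)\|^2\le K\,K_f\,J_{g\circ f}$ and the elementary inequality $2ab\le a^2+b^2$ one gets
\[
\|D(g\circ f)\|\le \tfrac{1}{2}\bigl(K\,K_f + J_{g\circ f}\bigr),
\]
where $K_f\in L^1_{\loc}$ by exponential integrability and $\int_{U'}J_{g\circ f}=\int_{U'}J_g(f(z))J_f(z)\,d\sigma=\int_{f(U')}J_g<\infty$ by the area formula (valid since $f$ satisfies Lusin (N)). This gives local integrability of the pointwise differential, but you still need to argue that $g\circ f$ is ACL (absolutely continuous on almost every line), which does not follow automatically from composing ACL maps. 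One way to close this is to invoke the general composition theorem for mappings of finite distortion (e.g.\ \cite{AstalaIwaniecMartin:quasiconformal}*{Section 20}), which handles the Sobolev regularity of $g\circ f$ directly; alternatively, one can use that $g$ is locally quasisymmetric to control the behavior of $g\circ f$ on line segments.
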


\begin{theorem}[\cite{LyubichMerenkovMukherjeeNtalampekos:David}*{Theorem 2.8}]\label{theorem:removable}
    Let $H\colon \widehat \C\to \widehat \C$ be a David homeomorphism. Then the set $E=H(\mathbb S^1)$ is locally conformally removable in the following sense. For every open set $U\subset \widehat \C$ and each topological embedding $f\colon U\to \widehat \C$ that is conformal in $U\setminus E$, we have that $f$ is conformal on $U$. 
\end{theorem}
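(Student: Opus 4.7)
The plan is to reduce Theorem \ref{theorem:removable} to the David uniqueness theorem (Theorem \ref{theorem:stoilow}) by constructing a David extension of $f\circ H$ across $\mathbb{S}^1$. Fix $U$ and $f$ as in the statement and set $F:= f\circ H$ on the open set $W:=H^{-1}(U)$. Since $f$ is a topological embedding and $H$ is a homeomorphism, $F$ is a homeomorphism onto its image in $\widehat\C$. On the open subset $W\setminus\mathbb{S}^1 = H^{-1}(U\setminus E)$ the map $f$ is conformal, hence a $1$-quasiconformal embedding; applying Proposition \ref{prop:david_qc_invariance} to $H$ restricted to this set and to $f$, I conclude that $F$ is a David homeomorphism on $W\setminus\mathbb{S}^1$ whose Beltrami coefficient satisfies $\mu_F=\mu_H$ almost everywhere there.

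The crux of the argument is to upgrade this to: $F$ is a David homeomorphism on all of $W$. Since $\mathbb{S}^1$ has two-dimensional Lebesgue measure zero, the identification $\mu_F=\mu_H$ extends automatically to a.e.\ point of $W$, and exponential integrability of $K_F$ over $W$ follows from exponential integrability of $K_H$. What remains is to verify the Sobolev regularity $F\in W^{1,1}_{\loc}(W)$. To do this, I would use the ACL characterization of Sobolev functions. Because $F\in W^{1,1}_{\loc}(W\setminus\mathbb{S}^1)$, it is absolutely continuous along almost every horizontal and vertical segment inside $W\setminus\mathbb{S}^1$. For a.e.\ line parallel to the coordinate axes, the intersection with the smooth curve $\mathbb{S}^1$ consists of at most two points, so by continuity of $F$ on $W$, absolute continuity on a segment in $W$ is recovered from absolute continuity on the finitely many subsegments delimited by those points. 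The $L^1_{\loc}$ bound on the weak derivatives is then produced from the pointwise inequality $\|DF\|^2\leq K_F\, J_F$ a.e., Cauchy--Schwarz, the local integrability of the Jacobian $J_F$ (for a homeomorphism, $\int_K J_F\leq \operatorname{area}(F(K))$ for compact $K\subset W$), and the fact that exponential integrability of $K_F=K_H$ implies $K_F\in L^p_{\loc}$ for every $p\geq 1$.

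Once $F$ is known to be a David homeomorphism on $W$, both $F$ and $H|_W$ are David maps on $W$ whose Beltrami coefficients agree almost everywhere. Theorem \ref{theorem:stoilow} then yields that $F\circ H^{-1}=f$ is conformal on $H(W)=U$, which is exactly the desired conclusion. Localizing, it suffices to run this argument on a small topological disk $U$ about any point of $E$, which is enough since conformality is a local property.

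The main obstacle is the Sobolev regularity of $F$ across $\mathbb{S}^1$: the heuristic that a smooth curve is removable for Sobolev functions is correct, but a priori the only information available on $\mathbb{S}^1$ is continuity of $F$, with no regularity beyond that. All of the Sobolev information must therefore be reconstructed from the ACL reasoning together with the distortion/Jacobian integrability estimates, using only the homeomorphism property and David regularity on each side of $\mathbb{S}^1$.
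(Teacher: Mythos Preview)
The paper does not prove this theorem; it is quoted verbatim from \cite{LyubichMerenkovMukherjeeNtalampekos:David}*{Theorem 2.8} and used as a black box. So there is no in-paper proof to compare your proposal against. Your strategy---pull back by $H$, show $F=f\circ H$ is David on all of $W$ with $\mu_F=\mu_H$, then invoke Theorem \ref{theorem:stoilow}---is exactly the standard route and is, up to one point below, correct.

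The one genuine gap is in the ACL step as you wrote it. The sentence ``by continuity of $F$ on $W$, absolute continuity on a segment in $W$ is recovered from absolute continuity on the finitely many subsegments delimited by those points'' is false without additional input: take $g(x)=x\sin(1/x)$ on $[0,1]$, which is continuous, absolutely continuous on every $[\epsilon,1]$, yet not absolutely continuous on $[0,1]$ because $g'\notin L^1$. What you actually need is that for a.e.\ line the restriction of $\|DF\|$ is integrable; then AC on each subsegment together with continuity at the puncture and $\int|F'|<\infty$ does give AC across the puncture, since $F(x)=F(a)+\int_a^x F'$ extends by dominated convergence. You do supply the missing ingredient $\|DF\|\in L^1_{\loc}(W)$ in the next paragraph, via $\|DF\|^2\le K_H J_F$, Cauchy--Schwarz, and the Jacobian bound $\int_K J_F\le |F(K)|$ (obtained by exhausting $K$ by compacta in $W\setminus\mathbb S^1$ and monotone convergence). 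But this estimate must come \emph{before} the ACL argument, not after: first show $\|DF\|\in L^1_{\loc}(W)$, then Fubini gives integrability of $\|DF\|$ on a.e.\ line, and only then does the ACL gluing go through. With that reordering the proof is complete. One minor remark: the statement allows $f$ to be orientation-reversing; in that case run the argument with $\bar f$ in place of $f$.
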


\subsection{Extensions of circle homeomorphisms}
We will need an extension result for David maps, which is a generalization of the well-known extension of Beurling and Ahlfors \cite{BeurlingAhlfors:extension}. Let $h\colon \mathbb S^1\to \mathbb S^1$ be an orientation-preserving homeomorphism. We define the \textit{symmetric distortion function} of $h$ to be 
\begin{align*}
\rho_h(z,t)= \max\left\{  \frac{|h(e^{2\pi i t}z)-h(z)| }{ |h(e^{-2\pi i t}z)-h(z)| } ,  \frac{|h(e^{-2\pi i t}z)-h(z)| }{ |h(e^{2\pi i t}z)-h(z)| }\right\},
\end{align*}
where $z\in \mathbb S^1$ and $0<t<1/2$. The distortion function $\rho_h(z,t)$ measures whether $h$ maps adjacent arcs of equal length $t$ to adjacent arcs of equal length and how far it is from that behavior. We define the \textit{scalewise distortion function} of $h$ to be  
\begin{align*}
\varrho_h(t)= \max_{z\in \mathbb{S}^1}\rho_h(z,t),  
\end{align*}
where $0<t<1/2$. If $\varrho_h(t)$ is bounded above, then the function $h$ is a quasisymmetry and has a quasiconformal extension on $\D$, by the theorem of Beurling and Ahlfors \cite{BeurlingAhlfors:extension}. Zakeri observed in \cite{Zakeri:boundary}*{Theorem 3.1}, by applying a result of \cite{ChenChenHe:boundary}, that there is a growth condition on $\varrho_h(t)$ that is sufficient for a homeomorphism $h$ of the circle to have a David extension in the disk.

\begin{theorem}[David extension; \cite{ChenChenHe:boundary}*{Theorem 3}, \cite{Zakeri:boundary}*{Theorem 3.1}]\label{theorem:extension_david}
Let $h\colon \mathbb S^1\to \mathbb S^1$ be an orientation-preserving homeomorphism and suppose that
\begin{align*}
\varrho_h(t) = O(\log(1/t))\quad \textrm{as} \quad t\to 0.
\end{align*}
Then $h$ has an extension to a homeomorphism $\widetilde h\colon \bar \D\to \bar\D$ such that $\widetilde h|_{\D}$ is a David map. 
\end{theorem}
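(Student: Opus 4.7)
My plan is to construct the extension explicitly via the Beurling--Ahlfors averaging formula, estimate its distortion pointwise by the symmetric distortion $\rho_h$, and then derive exponential integrability from the logarithmic growth of $\varrho_h$. Using the Cayley transform I would pass to the upper half-plane, write $H\colon\R\to\R$ for the boundary homeomorphism induced by $h$, and form
\[
F(x+iy)=\frac{1}{2}\int_0^1\bigl[H(x+ty)+H(x-ty)\bigr]\,dt\;+\;\frac{i}{2}\int_0^1\bigl[H(x+ty)-H(x-ty)\bigr]\,dt.
\]
Transporting $F$ back yields a continuous extension $\widetilde h\colon \overline\D\to \overline\D$ of $h$ that is a diffeomorphism of $\D$.

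The heart of the argument is the pointwise distortion estimate
\[
K_{\widetilde h}(z)\;\leq\; C\,\rho_h\bigl(z/|z|,\,c(1-|z|)\bigr)+C'
\]
for absolute constants $C,c,C'>0$ and $z$ in a collar of $\mathbb S^1$. On the half-plane side, differentiating under the integral expresses $\partial_x F$ and $\partial_y F$ algebraically in terms of the one-sided differences $\alpha(x,y)=H(x+y)-H(x)$, $\beta(x,y)=H(x)-H(x-y)$ and the two half-averages of $H$ over $[x-y,x]$ and $[x,x+y]$. The classical Beurling--Ahlfors manipulation then produces a bound of the form $K_F(x+iy)\leq C\max\{\alpha/\beta,\beta/\alpha\}+C'$; since $\rho_h$ is defined symmetrically around a basepoint on $\mathbb S^1$, the Cayley transform converts this half-plane ratio into precisely $\rho_h$ at the radial projection of $z$ on a scale comparable to $1-|z|$.

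Given the pointwise estimate, the hypothesis $\varrho_h(t)=O(\log 1/t)$ yields $K_{\widetilde h}(z)\leq A\log(1/(1-|z|))+B$ for $z$ near $\mathbb S^1$, while on $\{|z|\leq 1/2\}$ the map $\widetilde h$ is a smooth diffeomorphism with bounded distortion. Choosing $p>0$ small enough that $pA<1$,
\[
\int_\D \exp\bigl(p K_{\widetilde h}\bigr)\,d\sigma \;\lesssim\; \int_\D \frac{d\sigma(z)}{(1-|z|)^{pA}}\;<\;\infty,
\]
which is the David condition, and together with the continuous boundary values this completes the proof.

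The main obstacle is the pointwise distortion bound of the second paragraph; in particular, the \emph{choice} of extension is essential. The Beurling--Ahlfors formula is tailored so that $F(x+iy)$ depends only on the values of $H$ on the symmetric interval $[x-y,x+y]$ around $x$, aligning perfectly with the two-sided symmetric comparison defining $\rho_h$; by contrast, alternatives such as the Poisson or Douady--Earle extension involve global or asymmetric features of $h$, whose distortion is not controlled purely by $\varrho_h$, so the logarithmic growth hypothesis alone would not suffice. Squeezing out the clean algebraic bound $K_F\leq C\max\{\alpha/\beta,\beta/\alpha\}+C'$ without an extraneous multiplicative factor that would degrade the integrability exponent $pA$ is standard but delicate, and is where the technical content of the theorem concentrates.
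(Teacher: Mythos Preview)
The paper does not prove this statement; it is quoted from Chen--Chen--He and Zakeri, so there is no ``paper's own proof'' to compare against. Your outline---Beurling--Ahlfors extension, a pointwise distortion estimate in terms of the boundary quasisymmetry, then exponential integrability from the logarithmic growth of $\varrho_h$---is indeed the route taken in those references.

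That said, your key pointwise claim is stated too sharply. Writing $\alpha=H(x+y)-H(x)$, $\beta=H(x)-H(x-y)$, the Jacobian of the Beurling--Ahlfors map is
\[
J_F(x+iy)=\tfrac{1}{2y^2}\bigl[\alpha(\beta-\beta')+\beta(\alpha-\alpha')\bigr],
\]
where $\alpha',\beta'$ are the half-averages you mention. Bounding $\alpha-\alpha'$ and $\beta-\beta'$ from below genuinely requires the quasisymmetry of $H$ at \emph{shifted} basepoints (e.g.\ $x\pm y/2$) and at scale $y/2$, not merely the single ratio $\alpha/\beta$ at $(x,y)$; one can cook up an increasing $H$ on $[x-y,x+y]$ with $\alpha=\beta$ yet $\alpha-\alpha'$ arbitrarily small. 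So the bound you should aim for is in terms of the \emph{scalewise} distortion $\varrho_H$ at scales comparable to $y$, not in terms of the pointwise $\rho_h(z/|z|,c(1-|z|))$. More importantly, a naive estimate yields only $K_F(x+iy)\lesssim \varrho_H(y)\cdot\varrho_H(y/2)$, which under the hypothesis gives $K_{\widetilde h}(z)\lesssim(\log(1-|z|)^{-1})^2$, and then $\exp(pK_{\widetilde h})$ fails to be integrable for every $p>0$. Extracting the \emph{linear} bound $K_F(x+iy)\lesssim \varrho_H(cy)$ is exactly the content of the Chen--Chen--He lemma you are invoking; your last paragraph correctly flags this as the delicate step, but the displayed inequality $K_F\le C\max\{\alpha/\beta,\beta/\alpha\}+C'$ should be replaced by the corresponding bound in $\varrho_H$ and its proof sketched more carefully, since this is where the theorem lives or dies.
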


See also the recent work \cite{KarafylliaNtalampekos:extension} for a stronger extension result, which asserts that the weaker condition  
$$\exp \rho_h(\cdot,\cdot) \in L^p( \mathbb S^1 \times (0,1/2))\quad  \textrm{for some $p>0$}$$
is sufficient for an extension.  

\section{Markov partitions and expansive circle maps}\label{section:markov_expansive}

If $a,b\in \mathbb{S}^1$, we denote by $\arc{[a,b]}$ and $\arc{(a,b)}$ the closed and open arcs, respectively, from $a$ to $b$ in the positive orientation. The arc $\arc{(b,a)}$, for example, is the complementary arc of $\arc{[a,b]}$. We also denote the arc $\arc{(a,b)}$ by $\inter{\arc{[a,b]}} $. We say that two non-overlapping arcs $I,J\subset \mathbb{S}^1$ are \textit{adjacent} if they share an endpoint.

\begin{definition}[Markov partition]\label{definition:markov_partition}
A \textit{Markov partition} associated to a covering map $f\colon \mathbb{S}^1\to \mathbb{S}^1$ is a covering of the unit circle by closed arcs $A_k=\arc{[a_k, a_{k+1}]}$, $k\in \{0,\dots, r\}$, $r\geq 1$, where $a_{r+1}=a_0$, that have disjoint interiors and satisfy the following conditions.
\begin{enumerate}[label={(\roman*)}]
\item\label{markov:i} The map $f_k=f|_{\inter{A_k}}$ is injective for $k\in \{0,\dots,r\}$.
\item\label{markov:ii} If $f(\inter{A_k})\cap \inter{A_j}\neq \emptyset$ for some $k,j\in \{0,\dots,r\}$, then $\inter{A_j}\subset f(\inter{A_k})$. 
\item\label{markov:iii} The set $\{a_0,\dots,a_r\}$ is invariant under $f$.
\end{enumerate}    
We denote the above Markov partition by $\mathcal P(f;\{a_0,\dots,a_r\})$. 
\end{definition}

Note that by definition, the points $a_0,\dots,a_r$ are ordered in the positive orientation if $r\geq 2$; if $r=1$, there is no natural order. Moreover, \ref{markov:ii} and \ref{markov:iii} are equivalent under condition \ref{markov:i}.

\subsection{Admissible words}
Let $f\colon \mathbb{S}^1\to \mathbb{S}^1$ be a covering map and consider a Markov partition $\mathcal P=\mathcal P(f;\{a_0,\dots,a_r\})$. We can associate a matrix $B=(b_{kj})_{k,j=0}^r$ to $\mathcal P$ so that $b_{kj}=1$ if $f_k(A_k)\supset A_j$ and $b_{kj}=0$ otherwise. In the case $b_{kj}=1$ we define $A_{kj}$ to be the closure of $f_k^{-1}(\inter A_j)$. If $w=(j_1,\dots,j_n)\in \{0,\dots,r\}^n$, $n\in \N$, $k\in \{0,\dots,r\}$, and once $A_w$ has been defined, we define $A_{kw}$ to be the closure of $f_k^{-1}(\inter A_w)$ whenever $b_{kj_1}=1$. A \textit{word} $w=(j_1,\dots,j_n)\in \{0,\dots,r\}^n$, $n\in \N$, is \textit{admissible (for the Markov partition $\mathcal P$)} if $b_{j_1j_2} =\dots=b_{j_{n-1}j_n}=1$. The empty word $w$ is considered to be admissible and we define $A_w=\mathbb S^1$. We also define $A_w=\emptyset$ if $w$ is not admissible. The \textit{length} of a word $w=(j_1,\dots,j_n)\in \{0,\dots,r\}^n$ is defined to be $|w|=n$ and the length of the empty word is $0$. It follows from properties \ref{markov:i} and \ref{markov:ii} that for each $n\in \N$ the arcs $A_w$, where $|w|=n$, have disjoint interiors and their union is equal to $\mathbb{S}^1$. 

Inductively, we have $A_{wj}\subset A_w$ for all admissible words $w$ and $j\in \{0,\dots,r\}$. If $A_{wj}$ is non-empty, we say that $A_{wj}$ is a \textit{child} of $A_w$ and $A_w$ is the \textit{parent} of $A_{wj}$. Thus, $A_w$ has at most $r+1$ children. Also, if $k\in \{0,\dots,r\}$, $w$ is a non-empty word, and $(k,w)$ is admissible, then $f$ maps $\inter A_{kw}$ homeomorphically onto $\inter A_w$. Thus, $f$ acts as a subshift on the space of admissible words. 

\subsection{Levels and complementary arcs}
Let $f\colon \mathbb S^1\to \mathbb S^1$ be a covering map and $\mathcal P(f; \{a_0,\dots,a_r\})$ be a Markov partition. We let $F_n$ be the preimages of $F_1= \{a_0,\dots,a_r\}$ under $n-1$ iterations of $f$ and $F_{0}=\emptyset$. Observe that $F_{n}\supset F_{n-1}$ for each $n\in \N$. Indeed, $F_1\supset f(F_1)$ by condition \ref{markov:iii} in Definition \ref{definition:markov_partition}, hence $F_2=f^{-1}(F_1)\supset f^{-1}(f(F_1)) \supset F_1$, so the conclusion follows by induction. We define the \textit{level} of a point $c\in \bigcup_{n\geq 1}F_n$ to be the unique $n\in \N$ such that $c\in F_n\setminus F_{n-1}$. 

For each $n\in \N$, the set $\mathbb S^1\setminus F_n$ consists of open arcs. For practical purposes we will use the terminology \textit{complementary arcs of $F_n$} to indicate the family of the \textit{closures} of the components of $\mathbb S^1 \setminus F_n.$ Hence, all complementary arcs of $F_n$ are closed arcs. Note that the complementary arcs of $F_n$ are the arcs $A_w$, where $w$ is an admissible word with $|w|=n$.

\subsection{Canonical splitting}\label{section:canonical_splitting}
We continue to assume that  $\mathcal P(f; \{a_0,\dots,a_r\})$ is a Mar\-kov partition associated to $f$. Let $l\in \N$ and $w=(j_1,\dots,j_l)$ be an admissible word. Let $m\in \{0,\dots,l-1\}$ be such that $m+1$ is the minimum of the levels of the endpoints of $A_w$. We set $v=(j_1,\dots,j_m)$ and $u=(j_{m+1},\dots,j_l)$. We call $(v,u)$ the \textit{canonical splitting} of the admissible word $w$ and we write $w=(v,u)$. Note $v$ can be the empty word if $m=0$. Also, since $A_w$ is a complementary arc of $F_l$, we have $m+1\leq l$, so $|u|\geq 1$ and $u$ is not the empty word. Finally, note that $A_u$ has a point of $F_1$ as an endpoint, since it is the image of $A_w$ under $f^{\circ m}$. We define the canonical splitting $(v,u)$ of the empty word to consist of empty words.

\begin{lemma}\label{lemma:canonical_split}
Let $f\colon \mathbb{S}^1\to \mathbb{S}^1$ be a covering map and consider a Markov partition $\mathcal P(f; \{a_0,\dots,a_r\})$. For each admissible word $w$ with canonical splitting $w= (v,u)$ we have $A_w\subset \inter A_v$.
\end{lemma}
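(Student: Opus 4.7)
The plan is to deduce the containment from two separate facts: a general refinement property of the Markov partition (which forces $A_w \subset A_v$) and the specific level condition built into the canonical splitting (which prevents $A_w$ from touching $\partial A_v$). Combining the two gives $A_w \subset \inter A_v$.

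More concretely, write $w = (v,u)$ with $v = (j_1, \dots, j_m)$ and $u = (j_{m+1}, \dots, j_l)$. The inclusion $A_w \subset A_v$ will follow by iterating the already-noted containment $A_{xj} \subset A_x$ along the chain
$$A_w = A_{vj_{m+1}\cdots j_l} \subset A_{vj_{m+1}\cdots j_{l-1}} \subset \cdots \subset A_{vj_{m+1}} \subset A_v.$$
For the endpoint-avoidance step, I would use that $A_v$ is, by definition, a complementary arc of $F_m$; hence its two endpoints lie in $F_m$ and each carries level at most $m$. On the other hand, by the very definition of the canonical splitting, $m+1$ equals the minimum of the levels of the endpoints of $A_w$, so both endpoints of $A_w$ have level at least $m+1$. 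Since the level of a point is uniquely defined, no endpoint of $A_w$ can coincide with an endpoint of $A_v$. Because $A_w$ is a closed sub-arc of $A_v$ whose two endpoints lie in $A_v \setminus \partial A_v = \inter A_v$, the whole arc $A_w$ is contained in $\inter A_v$.

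The empty-word case (when $w$, and hence $v$ and $u$, are empty) is trivial because $A_w = A_v = \mathbb{S}^1$. There is no substantive obstacle in this proof; the only point that requires care is the level bookkeeping, which in turn relies on the monotonicity $F_{n-1} \subset F_n$ established at the start of Section \ref{section:markov_expansive}.
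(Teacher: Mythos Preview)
Your proof is correct and follows essentially the same approach as the paper's: both establish $A_w \subset A_v$ from the nesting $A_{xj}\subset A_x$, then rule out shared endpoints by comparing levels (at most $m$ for $\partial A_v$, at least $m+1$ for $\partial A_w$). One small imprecision: your trivial case should be ``$v$ empty'' rather than ``$w$ empty,'' since $v$ can be empty while $w$ is not (namely when $m=0$), and in that case $A_v=\mathbb{S}^1$ has no endpoints---the paper handles this by simply noting there is nothing to show when $v$ is empty.
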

\begin{proof}
    If $v$ is the empty word we have nothing to show, so suppose that $|v|\geq 1$. Since $w = (v,u)$, we have $A_w \subset A_v$.
    Since the endpoints of $A_v$ have level at most $m$ and both endpoints of $A_w$ have level at least $m+1$, we conclude that $A_w$ and $A_v$ have no common endpoint. Hence, $A_w\subset \inter A_v$.
\end{proof}

\subsection{Expansive maps}We give the definition of an expansive map of $\mathbb S^1$.

\begin{definition}[Expansive map]\label{definition:expansive}
Let $f\colon \mathbb{S}^1\to \mathbb{S}^1$ be a continuous map. 
\begin{enumerate}[label=\normalfont(\alph*)]
    \item For a point $a\in \mathbb S^1$ we say that $f$ is \textit{expansive at $a$} if there exists $\delta>0$ such that for every $b\in \mathbb S^1$ with $a\neq b$ we have $|f^{\circ n}(a)-f^{\circ n}(b)|>\delta$ for some $n\in \N\cup \{0\}$.
    \item We say that $f$ is \textit{expansive} if there exists $\delta>0$ such that for every $a,b\in \mathbb{S}^1$ with $a\neq b$ we have $|f^{\circ n}(a)-f^{\circ n}(b)|>\delta$ for some $n\in \N\cup \{0\}$.
\end{enumerate}

\end{definition}

An expansive map of $\mathbb S^1$ is necessarily a covering map of degree $d\geq 2$ \cite{HemmingsenReddy:expansive}*{Theorem 2}. We now list some important properties of expansive maps.
\begin{enumerate}[label={($E$\arabic*)}]
\item\label{expansive:diameters}
Let $f\colon \mathbb S^1\to \mathbb S^1$ be a continuous map and  $\mathcal P(f;\{a_0,\dots,a_r\})$ be a Markov partition. Then $f$ is expansive if and only if
\begin{align*}
\lim_{n\to\infty}\max\{\diam{A_w}: |w|=n \} =0.
\end{align*}
This can be proved easily using \cite{PrzytyckiUrbanski:book}*{Theorem 3.6.1, p.~143}. 

\item\label{expansive:conjugate}Let $f\colon \mathbb S^1\to \mathbb S^1$ be an expansive covering map of degree $d$. Then there exists an orientation-preserving homeomorphism $h\colon \mathbb{S}^1\to \mathbb{S}^1$ that conjugates $f$ to either the map $g(z)=\bar z^d$ or the map $g(z)=z^d$, depending on whether $f$ is orientation-reversing or orientation-preserving, respectively. Moreover, $h$ is unique up to rotation by a $(d+1)$-st root of unity if $g(z)=\bar z^d$ or a $(d-1)$-st root of unity if $g(z)=z^d$.
This property  is a consequence of \ref{expansive:diameters}. A more general statement for expansive self-maps of a compact manifold can be found in \cite{CovenReddy:expansive}*{Property $(2')$, p.~99}. 

\item\label{expansive:iterate_iff}A map $f\colon \mathbb S^1\to \mathbb S^1$ is expansive if and only if $f^{\circ p}$ is expansive for each $p\in \N$.  This is also a consequence of \ref{expansive:diameters}.

\item\label{expansive:fixed}
If $f\colon \mathbb S^1\to \mathbb S^1$ is expansive at $z_0$ and $f(z_0)=z_0$, then there exists a neighborhood of $z_0$ in which $f$ has no {other} fixed points.

\item\label{expansive:iterate}If $f\colon \mathbb S^1\to \mathbb S^1$ is expansive at $z_0$ and $p\in \N$, then $f^{\circ p}$ is also expansive at $z_0$. This is a consequence of the uniform continuity of $f$.
\end{enumerate}

A refined version of property \ref{expansive:conjugate} that we will need for our considerations is the following lemma. Its proof is straightforward, based on property \ref{expansive:diameters}, and is omitted.

\begin{lemma}\label{lemma:expansive_conjugate_fg}
Let $f,g\colon \mathbb S^1\to \mathbb S^1 $ be expansive covering maps with the same orientation and $\mathcal P(f;\{a_0,\dots,a_r\})$, $\mathcal P(g;\{b_0,\dots,b_r\})$  be  Markov partitions. Consider the map $h\colon \{a_0,\dots,a_r\} \to \{b_0,\dots,b_r\}$ defined by $h(a_k)=b_k$ for $k\in \{0,\dots,r\}$ and suppose that $h$ conjugates the map $f$ to $g$ on the set $\{a_0,\dots,a_r\}$, i.e., 
\begin{align*}
h(f(a_k))=g(b_k)
\end{align*}
for $k\in \{0,\dots,r\}$. Then $h$ has an extension to an orientation-preserving homeomorphism of $\mathbb S^1$ that conjugates $f$ to $g$ on $\mathbb S^1$.
\end{lemma}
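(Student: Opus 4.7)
The plan is to exploit the common symbolic structure of the two Markov partitions: one extends $h$ to the dense set $F = \bigcup_{n\in\N} F_n$ of Markov endpoints at all levels (relative to $f$), and then extends by continuity using property \ref{expansive:diameters}. Denote by $B_k=\arc{[b_k,b_{k+1}]}$ the basic arcs of the $g$-partition and by $B_w$ the arcs associated to admissible words, in the obvious analogy with $A_w$; let $G_n$ be the analogue of $F_n$ for $g$.

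Writing $f(a_k)=a_{\sigma(k)}$, the hypothesis $h(f(a_k))=g(b_k)$ together with the injectivity of $h$ on $\{a_0,\dots,a_r\}$ forces $g(b_k)=b_{\sigma(k)}$. Assume for concreteness that $f,g$ are orientation-preserving (the other case is identical). Then $f|_{A_k}$ is an orientation-preserving homeomorphism onto $\arc{[a_{\sigma(k)},a_{\sigma(k+1)}]}$, which is the union of the consecutive basic arcs $A_j$ for $\sigma(k)\leq j<\sigma(k+1)$ (indices modulo $r+1$); the analogous statement holds for $g$ with the same $\sigma$. Hence the two Markov partitions share the same transition matrix, and for every admissible word $w$ both $A_w$ and $B_w$ are simultaneously defined, with the family of level-$n$ arcs appearing in the same cyclic order on $\mathbb S^1$ for both $f$ and $g$.

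Now define $h$ inductively on each $F_n$ by sending the endpoints of $A_w$ with $|w|=n$ to the corresponding endpoints of $B_w$ in the common cyclic order. This is consistent across levels because each endpoint in $F_n$ persists as an endpoint in $F_{n+1}$, and the resulting map $h\colon F\to \bigcup_n G_n$ is an order-preserving bijection that conjugates $f$ to $g$ on $F$: if $x$ is an endpoint of $A_{(j_1,\dots,j_n)}$, then $f(x)$ is the corresponding endpoint of $A_{(j_2,\dots,j_n)}$, and the analogous statement for $g$ combined with orientation preservation yields $h(f(x))=g(h(x))$. By property \ref{expansive:diameters} applied to both $f$ and $g$, the maximum diameter of a level-$n$ arc tends to zero in both partitions, so $F$ is dense in $\mathbb S^1$ and $h$ extends uniquely by continuity: if $x_n\to x$ with $x_n\in F$, the $x_n$ eventually all lie in a common arc $A_w$ with $|w|$ arbitrarily large, forcing the images $h(x_n)\in B_w$ to converge. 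The resulting map $\widetilde h\colon \mathbb S^1\to \mathbb S^1$ is then an orientation-preserving homeomorphism, and $\widetilde h\circ f = g\circ \widetilde h$ holds globally by density. The main delicate point is the equality of transition matrices and the matching of cyclic orders across admissible words; everything else is a routine density argument based on \ref{expansive:diameters}.
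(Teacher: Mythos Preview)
Your proposal is correct and follows exactly the approach the paper intends: the paper omits the proof, noting only that it is ``straightforward, based on property \ref{expansive:diameters}'', which is precisely the density/continuity argument you carry out. The only minor imprecision is the sentence about ``the $x_n$ eventually all lie in a common arc $A_w$''---a limit point could be an endpoint approached from both sides---but the Cauchy argument via \ref{expansive:diameters} for both $f$ and $g$ goes through regardless.
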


\begin{lemma}\label{lemma:expansive_point}
    Let $f\colon \mathbb S^1\to \mathbb S^1$ be a covering map let $z_0\in \mathbb S^1$ be a fixed point of $f$. Then $f$ is expansive at $z_0\in \mathbb S^1$ if and only if the following statement is true. 

    \smallskip
    \noindent
    There exist closed arcs $I,J$ with  $z_0\in \inter I\subset I \subset \inter J$ such that $f|_I\colon I\to J$ is bijective, and if we set $g=(f|_I)^{-1}\colon J\to I$, then $\{g^{\circ n}\}_{n\in \N}$ converges uniformly in $J$ to the constant function $z\mapsto z_0$.
\end{lemma}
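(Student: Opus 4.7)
For the easy direction, suppose the arcs $I, J$ and the map $g$ exist as in the statement. I would fix $\delta > 0$ small enough that the $\delta$-neighborhood of $z_0$ in $\mathbb S^1$ is contained in $\inter I$, and suppose for contradiction that some $w \neq z_0$ satisfies $|f^n(w) - z_0| \leq \delta$ for every $n \geq 0$. Then $f^n(w) \in I \subset J$ for all $n$, and the relation $g \circ f|_I = \mathrm{id}_I$ gives $f^n(w) = g(f^{n+1}(w))$, so iterating yields $w = g^k(f^k(w))$ for every $k$. Since $g^k$ converges uniformly to $z_0$ on $J$ and $f^k(w) \in J$, the right-hand side tends to $z_0$, which forces $w = z_0$, a contradiction. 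Hence $f$ is expansive at $z_0$.

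For the converse direction I would proceed in two steps. The first step is the construction of arcs $I \subset \inter J$ with $f|_I\colon I \to J$ bijective. I would lift $f$ to a homeomorphism $\tilde f\colon \mathbb R \to \mathbb R$ with $\tilde f(0) = 0$ (identifying $z_0$ with $0$). Using property \ref{expansive:fixed} applied to $f$, there is a neighborhood of $0$ containing no other fixed points, so $\tilde f(t) - t$ has constant sign on each of $(0,\epsilon)$ and $(-\epsilon, 0)$. In the orientation-preserving case $\tilde f$ is increasing, and if $\tilde f(t) < t$ on $(0,\epsilon)$ then the orbit of any $t \in (0,\epsilon)$ under $\tilde f$ stays in $(0,\epsilon)$ and decreases to $0$, contradicting expansiveness once $\epsilon$ is smaller than the expansiveness constant; so $\tilde f(t) > t$ on $(0,\epsilon)$ and symmetrically $\tilde f(t) < t$ on $(-\epsilon, 0)$, making the symmetric arc corresponding to $[-\epsilon/2, \epsilon/2]$ satisfy $I \subset \inter J$. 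In the orientation-reversing case I would apply the same reasoning to $\tilde f^2$ (which is increasing, and expansive at $z_0$ by property \ref{expansive:iterate}) to get $\tilde f^2(t) > t$ on $(0,\epsilon)$ and $\tilde f^2(t) < t$ on $(-\epsilon, 0)$; then I would choose $p < 0$ small together with $q = \tilde f(p) - \delta_0$ for a very small $\delta_0 > 0$. Automatically $\tilde f(p) > q$, and since $\tilde f(q) \to \tilde f^2(p) < p$ as $\delta_0 \to 0$, for $\delta_0$ small enough $\tilde f(q) < p$ as well, giving the asymmetric arc $I = [p, q]$ with $I \subset \inter J$ where $J = f(I)$.

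The second step is uniform convergence. The compact arcs $K_n := g^n(J)$ satisfy $K_{n+1} \subset K_n$ (since $g(J) = I \subset J$), all contain $z_0$, and their intersection $K = \bigcap_n K_n$ is a closed arc with $g(K) = K$ by continuity and compactness. Thus $g|_K$ is a self-homeomorphism of the arc $K$, which fixes both endpoints when $f$ (and hence $g$) is locally orientation-preserving at $z_0$, and swaps them when orientation-reversing. In the first case the endpoints are fixed points of $f$, in the second they are fixed points of $f^2$. Pre-shrinking $I$ so that $J$ lies inside a neighborhood of $z_0$ containing no other fixed points of $f$ or $f^2$ (via property \ref{expansive:fixed} applied to $f$ and to $f^2$, using property \ref{expansive:iterate}) forces both endpoints of $K$ to equal $z_0$, so $K = \{z_0\}$. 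Then the nested compact arcs $K_n$ have $\diam K_n \to 0$, and since $g^n(w) \in K_n$ for every $w \in J$, we obtain $\sup_{w \in J} |g^n(w) - z_0| \leq \diam K_n \to 0$.

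The main obstacle I anticipate is the existence of the arcs $I \subset \inter J$ in the orientation-reversing case, where the natural symmetric construction does not directly yield $I \subset \inter J$ and one has to resort to the asymmetric choice above that leverages the expansion of $\tilde f^2$; everything else reduces to standard nested-compact-set arguments together with the listed expansiveness properties.
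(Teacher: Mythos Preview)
Your proof is correct and follows essentially the same strategy as the paper: the easy direction is identical (if all iterates stay in $J$, then $w = g^{\circ k}(f^{\circ k}(w)) \to z_0$), and for the hard direction both arguments take the nested arcs $K_n = g^{\circ n}(J)$, intersect them, and rule out a nondegenerate limit arc by producing an extraneous fixed point of $f$ or $f\circ f$ near $z_0$.

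The one genuine difference is that the paper asserts the existence of $I \subset \inter J$ in a single sentence (``This assumption and the fact that $f$ is locally injective imply\ldots''), whereas you actually justify it by lifting to $\mathbb R$ and showing $\tilde f(t)>t$ on one side and $\tilde f(t)<t$ on the other via expansiveness, and you handle the orientation-reversing case with an explicit asymmetric choice of $I=[p,q]$. This fills in a detail the paper omits; the trade-off is that the paper's terse assertion keeps the proof short, while your version makes clear why expansiveness (and not just local injectivity) is needed for $I\subset\inter J$.
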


\begin{proof}Suppose that $f$ is expansive at $z_0$. This assumption and the fact that $f$ is locally injective imply that there exist closed arcs $I,J$ with $z_0\in \inter I\subset I\subset \inter J$ such that $f|_I \colon I\to J$ is bijective. By shrinking $I$, we may assume that it does not contain any fixed points of $f$ and $f\circ f$ other than $z_0$. Let $g=(f|_{I})^{-1}$ and define $J=\arc{[a_0,b_0]}$ and $I_n=g^{\circ n}(J)=\arc{[a_n,b_n]}$, $n\in \N$. Note that $I_{n+1}\subset I_n$, $n\in \N$. We claim that $\bigcap_{n=1}^\infty I_n=\{z_0\}$.  If not, there exists a non-degenerate arc $[a,b]\subset \bigcap_{n=1}^\infty I_n$ that contains $z_0$. Without loss of generality, suppose that $b\neq z_0$. Note that $b_n\to b$ as $n\to\infty$. If $f$ is orientation-preserving, we have $f(b_{n+1})= b_n$, $n\in \N$, so $f(b)=b$; otherwise, we have $(f\circ f)(b_{2n+2})=b_{2n}$, $n\in \N$, so $(f\circ f)(b)=b$.  This is a contradiction.     

For the converse, it suffices to show that for each point $w_0\in I\setminus \{z_0\}$ there exists $n\in \N$ such that $f^{\circ n}(w_0)\notin J$. Suppose that a point $w_0\in I$ has the property that $w_n=f^{\circ n}(w_0)\in J$ for each $n\in \N$. Then $w_0=g^{\circ n}(w_n)$. Since $\{g^{\circ n}\}_{n\in \N}$ converges uniformly to the constant function $z\mapsto z_0$, we conclude that $g^{\circ n}(w_n)$ converges to $z_0$ as $n\to\infty$. Therefore $w_0=z_0$.
\end{proof}

\subsection{Primitive Markov partitions}
Let $f\colon \mathbb S^1\to \mathbb S^1$ be a covering map with a Markov partition $\mathcal P=\mathcal P(f;\{a_0,\dots,a_r\})$. We say that $\mathcal P$ is \textit{primitive} if for each $k\in \{0,\dots,r\}$ there exists $n\in \N$ such that $f^{\circ n}(A_k)=\mathbb S^1$. {Equivalently, the matrix $B$ associated to the Markov partition is primitive, i.e., there exists $n\in \N$ so that each entry in $B^n$ is positive.}

Recall that if $f$ is expansive, then it is conjugate to $z\mapsto z^d$ or $z\mapsto \overline z^d$; see \ref{expansive:conjugate}. Hence, in that case every Markov partition is primitive. On the other hand, not every covering map with a primitive Markov partition is expansive. 

\begin{lemma}\label{lemma:primitive}
    Let $f\colon \mathbb S^1\to \mathbb S^1$ be a covering map and $\mathcal P(f;\{a_0,\dots,a_r\})$ be a Markov partition. The Markov partition is primitive if and only if there exists $p\in \N$ such that each complementary arc of $F_n$, where $n\in \N$, contains at least two complementary arcs of $F_{n+p}$. In that case, one may take $p=r$. 
\end{lemma}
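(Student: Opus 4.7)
My plan is to recast the geometric condition combinatorially and analyze each direction. The complementary arcs of $F_n$ are exactly $A_w$ for admissible $w$ with $|w|=n$, and the complementary arcs of $F_{n+p}$ contained in $A_w$ are the $A_{wu}$ for admissible extensions $u$ of length $p$. Their number equals $\sum_j(B^p)_{k,j}$, where $k$ is the last letter of $w$ and $B=(b_{ij})$ is the transition matrix of $\mathcal P$. Since every letter $k\in\{0,\dots,r\}$ is the last letter of the admissible singleton word $(k)$, the geometric condition is equivalent to the purely combinatorial statement $\sum_j(B^p)_{k,j}\ge 2$ for every $k$. I will use throughout that each column of $B$ sums to the covering degree $d$ of $f$: every $a_j\in F_1$ has $d$ preimages, which lie in distinct $\inter A_k$'s by the injectivity of $f|_{\inter A_k}$ in Definition~\ref{definition:markov_partition}\ref{markov:i}.

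For the direction $(\Rightarrow)$ with $p=r$ I argue by contradiction. Suppose $\sum_j(B^r)_{k,j}=1$ for some $k$: then the unique length-$r$ orbit $k=j_0\to j_1\to\cdots\to j_r$ has out-degree exactly $1$ at each $j_0,\dots,j_{r-1}$, so for every $s\le r$ the unique length-$s$ orbit from $k$ is $j_0,\dots,j_s$. Primitivity implies that the digraph of $B$ is strongly connected, hence of diameter at most $r$ (any simple path in a digraph on $r+1$ vertices has at most $r$ edges), which forces $\{j_0,\dots,j_r\}=\{0,\dots,r\}$. Also $d\ge 2$, since a degree-$1$ covering would make $B$ a permutation matrix, which is never primitive for $r\ge 1$. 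The total out-degree $(r+1)d\ge 2(r+1)$ is distributed so that $j_0,\dots,j_{r-1}$ contribute $r$, leaving the out-degree of $j_r$ at least $(r+1)d-r\ge r+2$, which exceeds the maximum possible value $r+1$—a contradiction.

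For the direction $(\Leftarrow)$, the key geometric observation is that $f^p$ is a local homeomorphism, so $f^p|_{A_k}$ is a locally injective continuous map on the simply connected arc $A_k$ and lifts through the universal cover $\pi:\mathbb R\to\mathbb S^1$ to a monotone continuous map $\widetilde f:A_k\to \mathbb R$. Consequently, $f^p|_{A_k}$ is injective if and only if $f^p(A_k)\subsetneq\mathbb S^1$, in which case $(B^p)_{k,j}\in\{0,1\}$ and $|R_p(k)|=\sum_j(B^p)_{k,j}$, where $R_n(k):=\{j:A_j\subseteq f^n(A_k)\}$ denotes the consecutive cyclic block of indices of arcs making up $f^n(A_k)$. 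Fix $k$ and suppose for contradiction that $f^{np}(A_k)\subsetneq\mathbb S^1$ for every $n\ge 1$; set $s_n=|R_{np}(k)|$. The hypothesis combined with the above gives $s_1\ge 2$, and for each $j\in R_{np}(k)$ we have $f^p(A_j)\subseteq f^{(n+1)p}(A_k)\subsetneq\mathbb S^1$, so $|R_p(j)|\ge 2$ as well. Because $f^p$ lifts monotonically on the proper arc $f^{np}(A_k)$, the images $f^p(A_j)$ for consecutive $j\in R_{np}(k)$ are consecutive proper sub-arcs of $\mathbb S^1$ sharing only the endpoints $f^p(a_j)\in F_1$; hence the sets $R_p(j)$ are pairwise disjoint and concatenate, giving $s_{n+1}=\sum_{j\in R_{np}(k)}|R_p(j)|\ge 2s_n$. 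Iteration yields $s_n\ge 2^n$, which eventually exceeds $r$, contradicting $s_n\le r$. Therefore $f^{np}(A_k)=\mathbb S^1$ for some $n$, proving primitivity.

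I expect the main obstacle to be the $(\Leftarrow)$ direction: the hypothesis is purely combinatorial (a lower bound on row sums of $B^p$), while primitivity is a dynamical statement involving how $A_k$ sits on the circle. The lifting argument is the essential geometric bridge that converts local injectivity into the disjointness needed for the recursive doubling of $s_n$; without exploiting that locally injective continuous maps from an interval into $\mathbb S^1$ are monotone on the universal cover, the combinatorial bound alone does not preclude $R(k) = \bigcup_n R_n(k)$ from being a proper subset of $\{0,\dots,r\}$.
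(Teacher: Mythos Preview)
Your proof is correct, but the two directions are handled quite differently from the paper.

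For $(\Rightarrow)$, the paper argues directly with iterates of arcs: if $A_k$ had only one child at level $r+1$, then each of $A_k, f(A_k),\dots,f^{\circ r}(A_k)$ would be a single complementary arc of $F_1$, and a short case analysis (either two of these coincide, giving a periodic arc contradicting primitivity, or they are all distinct, which is shown to contradict that $f$ is a covering map) finishes. Your route is genuinely different and arguably more structural: you read off that column sums of $B$ equal the degree $d\ge 2$, use that primitivity implies strong connectedness of the digraph and hence diameter $\le r$, deduce that the forced path $j_0\to\cdots\to j_r$ visits every vertex, and reach a contradiction by counting total out-degree. This avoids the paper's Case~2 argument about covering maps entirely.

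For $(\Leftarrow)$, your argument is correct but more elaborate than needed. The paper simply iterates the hypothesis \emph{inside} $A$: a complementary arc of $F_1$ contains at least two complementary arcs of $F_{p+1}$, each of which contains at least two of $F_{2p+1}$, and so on, yielding $\ge 2^k$ complementary arcs of $F_{kp+1}$ inside $A$ with no disjointness argument required (children of distinct arcs are automatically disjoint). You instead count arcs in the \emph{images} $f^{\circ np}(A_k)$, which forces you to invoke the monotone lift to secure disjointness of the $R_p(j)$. Both arguments ultimately use the local injectivity of the covering at the final step (deducing $f^{\circ kp}(A)=\mathbb S^1$ once there are $\ge r+1$ pieces), but the paper's bookkeeping is shorter because it stays on the preimage side where the hypothesis applies directly.
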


\begin{proof}
    Suppose that the Markov partition is primitive. It suffices to prove that each complementary arc $A$ of $F_1$ contains at least two complementary arcs of $F_{r+1}$. Let $A$ be a complementary arc of $F_1$. We show that one of the arcs $f(A),\dots,f^{\circ r}(A)$ contains at least two complementary arcs of $F_1$. If not, then each of $A,f(A),\dots,f^{\circ r}(A)$ is a complementary arc of $F_1$. Consider two cases. 

    \medskip
    \noindent
    \textbf{Case 1:}  Two of the arcs $A,f(A),\dots,f^{\circ r}(A), f^{\circ (r+1)}(A)$ are identical. Then there exists a complementary arc $B$ of $F_1$ and $p\in \N$ such that $f^{\circ p}$ maps $B$ onto itself and is injective in the interior of $B$. In particular, $f^{\circ (np)}$ maps $B$ onto itself for each $n\in \N$. This contradicts the assumption that the Markov partition is primitive.
    
    \medskip
    \noindent
    \textbf{Case 2:} The arcs $A,f(A),\dots,f^{\circ r}(A), f^{\circ (r+1)}(A)$ are distinct. Then, the complementary arcs of $F_1$ are precisely the arcs $A,f(A),\dots,f^{\circ r}(A)$. By the above and the properties of a Markov partition, $f$ maps injectively $\mathbb S^1\setminus f^{\circ r}(A)$ onto $\mathbb S^1\setminus A$ and $f$ is injective in the interior of $f^{\circ r}(A)$. Thus $f(f^{\circ r}(A))\supset A$. By the assumption of Case 2, we have $f^{\circ (r+1)}(A)\neq A$, so $f(f^{\circ r}(A))\supsetneq A$. Therefore, each point of $\inter A$ has a unique preimage under $f$, but there exist points outside $A$ with at least two preimages, one in $\mathbb S^1\setminus f^{\circ r}(A)$ and one in $f^{\circ r}(A)$. This is a contradiction to the fact that $f$ is a covering map.

    \medskip
    
    Conversely, suppose that there exists $p\in \N$ such that each complementary arc of $F_n$, where $n\in \N$, contains at least two complementary arcs of $F_{n+p}$. Let $A$ be a complementary arc of $F_1$. For each $k\in \N$, $A$ contains at least $2^k$ complementary arcs of $F_{kp+1}$. Each of those arcs is mapped by $f^{\circ (kp)}$ to a complementary arc of $F_1$. Hence, if $2^k\geq r+1$, then, given that $f$ is a covering map, we see that $f^{\circ (kp)}(A)$ covers at least $r+1$ complementary arcs of $F_1$, so it is equal to $\mathbb S^1$.  
\end{proof}

\subsection{Hyperbolic and parabolic points}\label{section:hyperbolic_parabolic_points}

We say that a finite collection of non-overlapping arcs $I_1,\dots,I_m\subset \mathbb{S}^1$, $m\in \N$, consists of \textit{consecutive} arcs if every arc $I_i$ shares one endpoint with another arc $I_j$, $j\neq i$. After renumbering if necessary, we will use the convention that $I_i$ has a common endpoint with $I_{i+1}$ for $i=1,\dots,m-1$. 

Let $f\colon \mathbb S^1\to \mathbb S^1$ be a covering map and $\mathcal P(f;\{a_0,\dots,a_r\})$ be a Markov partition.

\begin{definition}[Hyperbolic points]\label{definition:hyperbolic}
Let $a \in\{a_0,\dots,a_r\}$. We say that $a^+$ (resp.\ $a^-$) is \textit{hyperbolic} if there exist $\lambda>1$ and $L\geq 1$ such that the following statement is true. 

\smallskip
\noindent
If $I_1,I_2$ are consecutive complementary arcs of $F_n$, $n\geq 1$, $a$ is an endpoint of $I_1$, and $I_1,I_2\subset \arc{[a,z_0]}$ (resp.\ $I_1,I_2\subset \arc{[z_0,a]}$) for some $z_0\neq a$, then for each $i\in \{1,2\}$ we have
\begin{align}\label{inequality:adjacenthyperbolic}
L^{-1}\lambda^{-n}&\leq \diam {I_i} \leq L\lambda^{-n}.
\end{align}
In that case we set $\lambda(a^+)=\lambda$ (resp.\ $\lambda(a^-)=\lambda$) and call this number the \textit{multiplier} of $a^+$ (resp.\ $a^-$).
\end{definition}

\begin{definition}[Parabolic points]\label{definition:parabolic}
Let $a \in\{a_0,\dots,a_r\}$. We say that $a^+$  (resp.\ $a^-$) is \textit{parabolic} if there exist $N\in \N$ and $L\geq 1$ such that the following statement is true.

\smallskip
\noindent
If $I_1,I_2$ are consecutive complementary arcs of $F_n$, $n\geq 1$, $a$ is an endpoint of $I_1$, and $I_1,I_2\subset \arc{[a,z_0]}$ (resp.\ $I_1,I_2\subset \arc{[z_0,a]}$) for some $z_0\neq a$, then we have
\begin{align}\label{inequality:parabolic_1}
\frac{L^{-1}}{n^{1/N}}&\leq \diam {I_1} \leq \frac{L}{n^{1/N}} \quad \textrm{and}\quad \frac{L^{-1}}{n^{1/N +1}}\leq \diam {I_2} \leq \frac{L}{n^{1/N +1}}.
\end{align} 
In that case we set $N(a^+)=N$ (resp.\ $N(a^-)=N$) and call the number $N(a^+)+1$ (resp.\ $N(a^-)+1$) the \textit{multiplicity} of $a^+$ (resp.\ $a^-$).
\end{definition}

\begin{definition}\label{definition:parabolic_hyperbolic_symmetric}
    Let $a\in \{a_0,\dots,a_r\}$. We say that $a$ is \textit{symmetrically hyperbolic} if $a^+$ and $a^-$ are hyperbolic with $\lambda(a^+)=\lambda(a^-)$. In this case we denote by $\lambda(a)$ the common multiplier. We say that $a$ is \textit{symmetrically parabolic} if $a^{+}$ and $a^-$ are parabolic with $N(a^+)=N(a^-)$. In this case we denote this common number by $N(a)$. 
\end{definition}

\begin{remark}\label{remark:hp_expansive}
    Let $a\in \{a_0,\dots,a_r\}$ be a periodic point. If each of $a^+$ and $a^-$ is either hyperbolic or parabolic, then by Lemma \ref{lemma:expansive_point}, $f$ is expansive at $a$.
\end{remark}

\begin{remark}\label{remark:hyp_par_analytic}
If $f\colon \mathbb S^1\to \mathbb S^1$ is analytic and expansive, it is shown in Lemma 4.17 and Lemma 4.18 in \cite{LyubichMerenkovMukherjeeNtalampekos:David} that each point $a\in \{a_0,\dots,a_r\}$ is either symmetrically hyperbolic or symmetrically parabolic. In that case, if $a$ is periodic and $q$ is its period, then $\lambda(a)^q$ (resp.\ $N(a)+1$) is the usual multiplier (resp.\ multiplicity) of the analytic map $f^{\circ q}$.
\end{remark}

\section{Main extension theorem}\label{section:extension}
Our main extension theorem provides conditions so that a homeomorphism $h\colon \mathbb S^1\to \mathbb S^1$ that conjugates two covering maps of $\mathbb S^1$ extends to a quasiconformal or David homeomorphism of $\mathbb D$. 

Let $f,g\colon \mathbb S^1\to \mathbb S^1$ be covering maps with corresponding Markov partitions $\mathcal P(f;\{a_0,\dots,a_r\})$, $\mathcal P(g;\{b_0,\dots,b_r\})$. We give the definitions below using the notation associated to the map $f$. First we require the following condition.
\begin{enumerate}[label=\normalfont(M1)]
\medskip
    \item\label{condition:hp} For each $a\in \{a_0,\dots,a_r\}$, each of $a^+$ and $a^-$ is either hyperbolic or parabolic, as defined in Section \ref{section:hyperbolic_parabolic_points}.
\medskip
\end{enumerate}
Define $A_k=\arc{[a_k,a_{k+1}]}$ for $k\in \{0,\dots,r\}$ and recall that $f_k=f|_{\inter{A_k}}$ is injective by the definition of a Markov partition. We impose the following condition.
\begin{enumerate}[label=\normalfont(M2)]
\medskip 
    \item\label{condition:uv} For each $k\in \{0,\dots,r\}$ there exist open neighborhoods $U_k$ of $\inter{A_k}$ and $V_k$ of $f_k(\inter{A_k})$ in $\C$ such that $f_k$ has an extension to a {homeomorphism} from $U_k$ onto $V_k$. We denote the extension by $f_k$. Furthermore, we assume that
    \begin{align*}
    \bigcup_{\substack{0\leq j\leq r\\(k,j) \,\, \textrm{admissible}}}U_j \subset V_k
    \end{align*} 
    for all $k\in \{0,\dots,r\}$ and that the regions $U_k$, $k\in \{0,\dots,r\}$, are pairwise disjoint. 
\medskip
\end{enumerate}
We denote by $f$ the map that is equal to $f_k$ on $U_k$, $k\in \{0,\dots,r\}$. 
Using condition \ref{condition:uv}, we can define $U_{kj}=f_k^{-1}(U_j)$ for $k,j\in\{0,\dots,r\}$, whenever $(k,j)$ is admissible; recall the definitions given after Definition \ref{definition:markov_partition}. Note that $U_{kj}\subset U_k$ and $f_k$ maps $U_{kj}$ onto $U_j$. Inductively, for each admissible word $w$ we can find open regions $U_w$ with the following properties:
\begin{enumerate}[label=\normalfont{(\roman*)}]
\item $U_{wj} \subset U_w$, if $(w,j)$ is admissible, and 
\item $f_k$ maps $U_{kw}$ homeomorphically onto $U_w$, if $(k,w)$ is admissible. 
\end{enumerate}
If $w=(k_1,\dots, k_n)$ is admissible, we define $f_w=f_{k_n}\circ\dots \circ f_{k_1}=f^{\circ n}$ on the set $\bigcup \{U_{wj}: 0\leq j\leq r,\, (w,j)\,\, \textrm{admissible}\}$. It follows that $f_w$ maps homeomorphically $U_{wj}$ onto $U_j$. Observe that for each admissible word $w$ the open arc $\inter{A_w}$ is contained in $U_w$. Next, we impose the following condition.
\begin{enumerate}[label=\normalfont(M3)]
\medskip
    \item\label{condition:qs} There exists $K\geq 1$ such that for each $m\in \N\cup \{0\}$ and each admissible word $w$ with $|w|\geq m$ the map $f^{\circ m}|_{U_w}$ is $K$-quasiconformal.  
\medskip 
\end{enumerate}
Finally, we consider a technical condition that is a {refinement} of \ref{condition:qs} and compensates for the unavailability of Koebe's distortion theorem in our setting. Recall the definition of the canonical splitting of a word from Section \ref{section:canonical_splitting}.
\begin{enumerate}[label=\normalfont(M$3^\ast$)]
\medskip
    \item  \label{condition:qs_strong} There exists $C>0$ such that for each admissible word $w$ with canonical splitting $w=(v,u)$, if $|v|=m$ and $|u|=n$, then the map $f^{\circ m}|_{U_w}$ is $K_n$-quasiconformal for $K_n=1+C(1+\log (n+1))^{-1}$.
\medskip
\end{enumerate}

We now state the main extension theorem.

\begin{theorem}[Main extension theorem]\label{theorem:extension_generalization}Let $f,g\colon \mathbb{S}^1\to \mathbb{S}^1$ be expansive covering maps with the same orientation and $\mathcal P(f;\{a_0,\dots,a_r\})$, $\mathcal P(g;\{b_0,\dots,b_r\})$ be Markov partitions satisfying conditions \ref{condition:hp}, \ref{condition:uv}, and \ref{condition:qs}. Suppose that the map $h\colon \{a_0,\dots,a_r\} \to \{b_0,\dots,b_r\}$ defined by $h(a_k)=b_k$, $k\in \{0,\dots,r\}$, conjugates $f$ to $g$ on the set $\{a_0,\dots,a_r\}$ and assume that for each point $a\in \{a_0,\dots,a_r\}$ and for $b=h(a)$ one of the following alternatives occurs.

\smallskip

\begin{enumerate}[label= {\textup{\scriptsize(\textbf{H/P${\rightarrow}$H/P})}}, leftmargin=6em] 
\item\label{HH} There exists $\mu=\mu(a)>0$ such that if $a^{\pm}$ is parabolic then $b^{\pm}$ is parabolic with 
$\mu^{-1} N(a^{\pm})=N(b^{\pm})$, and if $a^{\pm}$ is hyperbolic then  $b^{\pm}$ is hyperbolic with  $\lambda(a^{\pm})^{\mu}=\lambda(b^{\pm})$.
\end{enumerate}

\begin{enumerate}[label={\textup{\scriptsize(\textbf{H$\to$P})}}, leftmargin=6em]
\item\label{HP} $a$ is symmetrically hyperbolic and $b$ is symmetrically parabolic.
\end{enumerate}

\smallskip 
\noindent
If the alternative \ref{HP} does not occur, then the map $h$ extends to a homeomorphism $\widetilde h$ of $\bar \D$ such that $\widetilde h|_{\mathbb S^1}$ conjugates $f$ to $g$ and $\widetilde h|_{\D}$ is quasiconformal. 

\smallskip 
\noindent
If $g$ satisfies condition \ref{condition:qs_strong}, then the map $h$ extends to a homeomorphism $\widetilde h$ of $\bar \D$ such that $\widetilde h|_{\mathbb S^1}$ conjugates $f$ to $g$ and $\widetilde h|_{\D}$ is a David map. 
\end{theorem}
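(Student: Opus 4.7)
The plan is to first extend $h$ to a homeomorphism of $\mathbb{S}^1$ conjugating $f$ to $g$ using Lemma~\ref{lemma:expansive_conjugate_fg}, and then to produce the disk extension by estimating the scalewise distortion function $\varrho_h(t)$ and invoking the Beurling--Ahlfors extension theorem in the quasiconformal case, or Theorem~\ref{theorem:extension_david} in the David case. The central task is thus to control the ratio of diameters of images under $h$ of pairs of adjacent small arcs of $\mathbb{S}^1$.

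The main combinatorial tool is the Markov structure. Every small arc of $\mathbb{S}^1$ is comparable to a complementary arc $A_w$ of some $F_n$, and by Lemma~\ref{lemma:canonical_split} one may write $w=(v,u)$ with $|v|=m$ so that $A_w\subset\inter A_v$ and $A_u = f^{\circ m}(A_w)$ has an endpoint in $F_1$. The key lemma to establish is that for any two adjacent complementary arcs $I_1, I_2$ of $F_n$ sharing a common endpoint, one has a comparability of the form
\begin{align*}
\frac{\diam h(I_1)}{\diam h(I_2)} \asymp \Phi\!\left(\frac{\diam I_1}{\diam I_2}\right),
\end{align*}
where $\Phi$ is a bounded power function in the quasiconformal case and carries logarithmic slack in the David case. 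To prove this, apply $f^{\circ m}$ with $m$ taken from the canonical splittings of the admissible words for $I_1$ and $I_2$; by condition \ref{condition:qs}, this iterate is $K$-quasiconformal on a common neighborhood $U_v$ containing both arcs, so by the distortion theorem (Theorem~\ref{theorem:qcqs}) it distorts the diameter ratio by a controlled factor. The problem is thereby reduced to comparing diameters of adjacent arcs meeting an $F_1$-endpoint $a$, where the defining inequalities \eqref{inequality:adjacenthyperbolic} and \eqref{inequality:parabolic_1} apply on each side of $a$. The \ref{HH} matching of multipliers and multiplicities then transfers into the asserted comparability via $h$.

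With the key lemma in hand, the scalewise distortion is estimated as follows. Given $z\in\mathbb{S}^1$ and small $t>0$, locate the least $n$ for which both arcs of length $t$ on either side of $z$ lie inside a pair of adjacent complementary arcs of $F_n$; then $\rho_h(z,t)$ is comparable to the ratio of the diameters of the corresponding image arcs. If every point satisfies \ref{HH}, all such ratios are uniformly bounded, giving $\varrho_h(t)=O(1)$, hence quasisymmetry and a quasiconformal extension via Beurling--Ahlfors. If some point satisfies \ref{HP}, the mismatch between the exponential scale $\lambda(a)^{-n}$ on the $f$-side and the polynomial scale $n^{-1/N(b)}$ on the $g$-side forces $n\asymp\log(1/t)$, and the ratio of adjacent image diameters on the parabolic side can grow as $n\asymp\log(1/t)$, yielding $\varrho_h(t)=O(\log(1/t))$, which is precisely the hypothesis of Theorem~\ref{theorem:extension_david}.

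The main obstacle is the \ref{HP} case. A purely quasiconformal pullback by $f^{\circ m}$ would multiply comparability errors by a factor $K^m$ with $m\asymp\log n$, which would blow up the final bound and destroy the logarithmic estimate. This is exactly the role of the refined condition \ref{condition:qs_strong}: it replaces the constant $K$ by $K_n=1+O(1/\log n)$, so that the accumulated distortion from $m$ pullbacks remains uniformly bounded, and the key comparability holds with a slack that is at most logarithmic in the scale. Verifying that this careful bookkeeping yields exactly the growth $\varrho_h(t)=O(\log(1/t))$ and not something worse, and matching the multipliers on both sides of every parabolic or hyperbolic endpoint simultaneously, constitutes the technical core of the proof.
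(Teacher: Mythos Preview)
Your overall strategy matches the paper's: extend $h$ via Lemma~\ref{lemma:expansive_conjugate_fg}, use the canonical splitting and a quasiconformal iterate $f^{\circ m}$ (the paper's ``quasiconformal elevator,'' Lemma~\ref{lemma:newelevator}) to push adjacent small arcs near a point of $F_1$, invoke the hyperbolic/parabolic diameter estimates there, and conclude via Beurling--Ahlfors or Theorem~\ref{theorem:extension_david}. The case analysis is somewhat more involved than you indicate---in particular there is a dichotomy according to whether the arc $I\cup J$ contains a complementary arc of the appropriate level or instead straddles a point of $F_{l+1}$ (alternatives \ref{lemma:newelevator:1} and \ref{lemma:newelevator:2})---but your outline is on the right track.

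However, your explanation of the role of condition~\ref{condition:qs_strong} contains a genuine misconception. You write that under \ref{condition:qs} alone the pullback would incur a factor $K^m$ from ``$m$ pullbacks,'' and that \ref{condition:qs_strong} is needed to keep this accumulated distortion bounded. This is not the issue: condition~\ref{condition:qs} already asserts that \emph{every} iterate $f^{\circ m}|_{U_w}$ is $K$-quasiconformal with a \emph{uniform} $K$, so no $K^m$ blowup occurs. The actual obstruction is different. In the \ref{HP} case, after transporting to a neighborhood of $b\in F_1$, the ratio $\diam h(I')/\diam h(J')$ (or its reciprocal) can be as large as $n$. To return from $h(I'),h(J')$ to $h(I),h(J)$ one applies the inverse of $g^{\circ m}$, which under \ref{condition:qs} is $\eta$-quasisymmetric with $\eta(t)\simeq C\max\{t^K,t^{1/K}\}$; by \eqref{quasisymmetry} this inflates a ratio of size $n$ to one of size $n^K$, which is \emph{not} $O(\log(1/t))$ unless $K=1$. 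Condition~\ref{condition:qs_strong}, imposed on $g$ (not $f$), supplies instead $K_n=1+O((\log n)^{-1})$, so that $n^{K_n}\simeq n\lesssim\log(1/t)$. In short, the refined condition is needed not to control an iterated composition but to control a \emph{single} application of the quasisymmetric distortion function to a large ratio; your proposed mechanism would not close the argument.
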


\begin{remark}We note that alternative \ref{HH} covers the following cases:
\begin{itemize}
\item $a^-,b^-$ are hyperbolic and $a^+,b^+$ are hyperbolic with $\lambda(a^{\pm})^{\mu}=\lambda(b^{\pm})$
\item $a^-,b^-$ are hyperbolic and $a^+,b^+$ are parabolic with $\lambda(a^-)^{\mu}=\lambda(b^-)$ and $\mu^{-1}N(a^+)=N(b^+)$
\item $a^-,b^-$ are parabolic and $a^+,b^+$ are hyperbolic with $\mu^{-1}N(a^-)=N(b^-)$ and $\lambda(a^+)^{\mu}=\lambda(b^+)$
\item $a^-,b^-$ are parabolic and $a^+,b^+$ are parabolic with $\mu^{-1}N(a^{\pm})=N(b^{\pm})$
\end{itemize} 
The only restriction is that the multiplicities and multipliers of the points $a^+,a^-$ and $b^+,b^-$ have to be related by the same number $\mu$, which depends only on the point $a$.
\end{remark}

\begin{remark}
    We can relax the assumption that $f$ and $g$ are expansive to the weaker conditions that $f$ and $g$ are covering maps and the the associated Markov partitions are primitive. Condition \ref{condition:hp} implies that $f,g$ are expansive at each periodic point of the Markov partition; see Remark \ref{remark:hp_expansive}. Upon imposing conditions \ref{condition:uv} and \ref{condition:qs}, we see that $f$ and $g$ satisfy the assumptions of  Theorem \ref{theorem:expansive_criterion} below. Therefore they are expansive.
\end{remark}

The theorem generalizes the recent result of \cite{LyubichMerenkovMukherjeeNtalampekos:David}, which instead assumes that $f$ and $g$ are piecewise analytic. One of the main technicalities here is that we do not have Koebe's distortion theorem available, since the maps $f$ and $g$ are piecewise quasiconformal. The other technical difficulty is that our definition of hyperbolic and parabolic points (Section \ref{section:hyperbolic_parabolic_points}) is much weaker than the corresponding definitions in the case of analytic maps.

We initiate the proof of Theorem \ref{theorem:extension_generalization}, which occupies the rest of the section. We will formulate most of the statements using the notation associated to the map $f$ and the Markov partition $\mathcal P(f;\{a_0,\dots,a_r\})$. The maps $f,g$ are assumed, throughout, to be covering maps of $\mathbb S^1$ with the same orientation. At the moment, we do not assume that they are expansive, unless otherwise stated. We will formulate a series of lemmas, indicating each time which of the conditions \ref{condition:hp}, \ref{condition:uv}, \ref{condition:qs}, and \ref{condition:qs_strong} are assumed.

\subsection{Quasiconformal elevator}

Let $I\subset \mathbb S^1$ be a closed non-degenerate arc and suppose that there exists an admissible word $w$ with the property that $I\subset A_w$ and $I$ is not contained in any child of $A_w$. We say that $w$ \textit{is the word associated to} $I$. If $f$ is expansive, then each non-degenerate closed arc has a unique associated word by \ref{expansive:diameters}. Note that $w$ could be the empty word, in which case $A_w=\mathbb S^1$. We start with a combinatorial lemma.

\begin{lemma}\label{lemma:split}
Suppose that $\mathcal P(f;\{a_0,\dots,a_r\})$ is a primitive Markov partition. Let $I\subset \mathbb S^1$ be a non-degenerate closed arc. Let $w$ be the word associated to $I$ and set $l=|w|\in \N\cup \{0\}$. 
\begin{enumerate}[label=\normalfont (\Roman*)]
    \item\label{lemma:split:ai} If $I$ contains a complementary arc of $F_{l+r+1}$, then it also contains a complementary arc of $F_{l+r+1}$ that is disjoint from the endpoints of $A_w$.
    \item\label{lemma:split:aii} If $I$ does not contain any complementary arc of $F_{l+r+1}$, then the following statements are true.
\begin{enumerate}[label=\normalfont (\arabic*)]
    \item\label{lemma:split:i} There exists a unique point $c\in I\cap F_{l+1}$. Moreover, $c\in \inter I$ and $c$ splits $I$ into two closed arcs $I^-$ and $I^+$.
    \item\label{lemma:split:ii} There exist two complementary arcs of $F_{l+r+1}$ containing the endpoints of $A_w$ that are disjoint from $I$.
    \item\label{lemma:split:iii} If $w^{\pm}$ is the word associated to $I^{\pm}$, then the minimum of the levels of the endpoints of $A_{w^{\pm}}$ is $l+1$. In particular, the canonical splitting of $w^{\pm}$ has the form $w^{\pm}=(w,u^{\pm})$.
    \item\label{lemma:split:iv} $I^{\pm}$ contains a complementary arc of $F_{|w^{\pm}|+1}$ that has $c$ as an endpoint. 
\end{enumerate}
\end{enumerate}
\end{lemma}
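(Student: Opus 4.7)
My plan rests on primitivity (Lemma~\ref{lemma:primitive}): every complementary arc of $F_n$ contains at least two complementary arcs of $F_{n+r}$. I will assume $l\geq 1$; the case $l=0$ is vacuous. Fix the following notation: let $p,q$ be the endpoints of $A_w$, let $A_{wj_p}, A_{wj_q}$ be the children of $A_w$ containing $p$ and $q$, and let $J_p\subset A_{wj_p}$, $J_q\subset A_{wj_q}$ be the complementary arcs of $F_{l+r+1}$ having $p$ and $q$ as respective endpoints. Since $w$ is the word associated to $I$, the arc $I$ is contained in no child of $A_w$; in particular $A_w$ cannot be its own child, so $A_w$ has at least two children and $A_{wj_p}, A_{wj_q}$ are proper sub-arcs of $A_w$. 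For Part~\ref{lemma:split:ai}, any complementary arc of $F_{l+r+1}$ in $I$ other than $J_p,J_q$ already has both endpoints in $\inter A_w$, so we are done. Otherwise $I\supset J_p$ (say); then $p\in I$, so $I$'s left endpoint equals $p$, and $I\not\subset A_{wj_p}$ forces $A_{wj_p}\subset I$. Primitivity gives at least two complementary arcs of $F_{l+r+1}$ in $A_{wj_p}$, so apart from $J_p$ there is at least one such arc with both endpoints in $\inter A_{wj_p}\cup\{q_p\}\subset\inter A_w$ (the containment uses $q_p\neq q$), giving the desired arc.

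Next, for Part~\ref{lemma:split:aii}, I will prove the four claims in order. For~(1), non-containment in any child forces $I$ to cross a boundary between adjacent children, yielding $c\in F_{l+1}\cap \inter I$. Two distinct such points would bracket a complete complementary arc of $F_{l+1}$ inside $I$, which by primitivity contains a complementary arc of $F_{l+r+1}$, contradicting Case~(II). For~(2), first observe $p\notin I$ (else $p$ would be a second $F_{l+1}$ point of $I$). If $J_p\cap I\neq\emptyset$, then either $I\subset J_p\subset A_{wj_p}$ (contradicting that $w$ is associated to $I$) or $I$ extends past $J_p$, placing the right endpoint $y_1$ of $J_p$ in $I$. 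In the latter case $\arc{[y_1,c]}\subset I$: when $c$ is the right endpoint of $A_{wj_p}$, this sub-arc contains a complete complementary arc of $F_{l+r+1}$ by primitivity, contradicting Case~(II); otherwise it crosses the right endpoint of $A_{wj_p}$, producing a second $F_{l+1}$ point in $I$ and contradicting~(1). The argument for $J_q$ is symmetric.

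Finally, for (3) and (4), I will focus on $I^+$. Since $c\in F_{l+1}\subset F_{|w^+|}$, the point $c$ must be an endpoint of $A_{w^+}$, necessarily the left one, with level exactly $l+1$; also, uniqueness of $c$ from~(1) implies the right endpoint $i_2$ of $I^+$ lies in $\inter A_{wj^+}$, where $A_{wj^+}$ is the child of $A_w$ immediately to the right of $c$. If the right endpoint $p'$ of $A_{w^+}$ had level $\leq l$, then $p'\in F_l\cap A_{wj^+}=\{q\}$, so $A_{wj^+}$ is the last child, $A_{w^+}=A_{wj^+}$, and $|w^+|=l+1$; but then the first complementary arc $D_1$ of $F_{l+r+1}$ in $A_{wj^+}$ starting at $c$ is a proper sub-arc of $A_{wj^+}$ by primitivity, and satisfies either $D_1\subset I^+$ (contradicting Case~(II)) or $I^+\subset D_1$ (contradicting $|w^+|=l+1$, since then $w^+$ would extend). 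Hence $p'$ has level $\geq l+1$, the minimum endpoint level of $A_{w^+}$ is $l+1$, and the canonical splitting is $w^+=(w,u^+)$. For~(4), the leftmost child of $A_{w^+}$ has $c$ as its left endpoint; $I^+$ cannot lie inside it (else $w^+$ would extend further), so this child is contained in $I^+$. The $I^-$ cases are symmetric. The hardest step is the case analysis in Part~(I) and in~(3), where a child sub-arc may extend all the way to an endpoint of $A_w$; in each such degenerate configuration, primitivity must be invoked a second time to produce an interior complementary arc of $F_{l+r+1}$, which is then combined with the Case~(II) hypothesis or the defining property of the associated word to force a contradiction.
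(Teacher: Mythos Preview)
Your overall architecture is sound and close in spirit to the paper's, but the two proofs are organized around different auxiliary arcs: the paper fixes the point $c\in\inter I\cap F_{l+1}$ first and works with the complementary arcs $B_1^{\pm},B_2^{\pm}$ of $F_{l+r+1}$ adjacent to $c$ inside the two children $A^{\pm}$ of $A_w$ meeting at $c$, whereas you work with the arcs $J_p,J_q$ at the two ends of $A_w$ and run a case analysis. The paper's choice makes Part~(II)(3) a one-liner: since $I^{\pm}$ contains no complementary arc of $F_{l+r+1}$, necessarily $I^{\pm}\subset B_1^{\pm}$, hence $A_{w^{\pm}}\subset B_1^{\pm}$, and the non-$c$ endpoint of $B_1^{\pm}$ already has level $>l+1$ because $B_1^{\pm}\subsetneq A^{\pm}$ by primitivity. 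Your route is longer but legitimate, and your proof of (II)(2) actually matches the literal statement more closely than the paper's.

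There is, however, a genuine slip in your argument for (II)(3). From $p'=q$ you correctly deduce that $A_{w^+}$ and $A_{wj^+}$ have the same endpoints and hence coincide as arcs, but the inference ``$|w^+|=l+1$'' does not follow: a complementary arc of $F_{l+1}$ may have a unique child (equal to itself as a set), in which case the associated word of $I^+$ is strictly longer than $l+1$. Primitivity only guarantees a proper refinement after $r$ steps, so in general you only get $|w^+|\le l+r$. Your dichotomy $D_1\subset I^+$ versus $I^+\subset D_1$ is fine, but the stated reason for the contradiction in the second branch is the false equality $|w^+|=l+1$. The fix is immediate: if $I^+\subset D_1$ then, since $D_1$ is a complementary arc of $F_{l+r+1}$, the word associated to $I^+$ has length at least $l+r+1$; but $A_{w^+}=A_{wj^+}$ contains at least two complementary arcs of $F_{l+r+1}$ by primitivity, so its interior meets $F_{l+r+1}$, forcing $|w^+|\le l+r$. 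This contradiction replaces your appeal to $|w^+|=l+1$. (A smaller point: the case $l=0$ is not entirely vacuous---parts (II)(1), (3), (4) still have content---but your arguments for those parts do not actually use $l\ge 1$, and the assumption ``level $\le l$'' is empty when $l=0$, so (3) is in fact easier there.)
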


\begin{proof}
    Recall that $A_w$ is a complementary arc of $F_l$ and $A_w\supset I$. The children of $A_w$ are complementary arcs of $F_{l+1}$. Since $I$ is not contained in any child of $A_w$, there exists a point $c\in \inter I \cap F_{l+1}$. In particular, $A_w$ contains two complementary arcs $A^{+},A^{-}$ of $F_{l+1}$ having $c$ as an endpoint.  By Lemma \ref{lemma:primitive}, $A^{+}$ (resp.\ $A^-$) contains two complementary arcs $B^{+}_{i}$ (resp.\ $B^-_i$), $i=1,2$, of $F_{l+r+1}$, such that $c$ is an endpoint of $B_1^+$ (resp.\ $B^-_1$). See the bottom part of Figure \ref{fig:split}.  

    Suppose that $I$ contains a complementary arc of $F_{l+r+1}$, as in \ref{lemma:split:ai}. Then it must contain either $B_{1}^-$ or $B_1^+$. None of these arcs contains the endpoints of $A_w$, so the proof of this case is completed.

    Next, suppose that $I$ does not contain any complementary arc of $F_{l+r+1}$ as in \ref{lemma:split:aii}. Thus, the same holds for complementary arcs of $F_{l+1}$. As a consequence, there exists a unique point in $I\cap F_{l+1}$, namely the point $c\in \inter I\cap F_{l+1}$ chosen above. Denote by $I^{\pm}$ the closures of the components of $I\setminus \{c\}$, so that $I^{\pm}\cap A^{\pm}$ has non-empty interior. We have completed the proof of \ref{lemma:split:i}.

    By assumption, none of the complementary arcs $B_{i}^{\pm}$, $i=1,2$, of $F_{l+r+1}$ is contained in $I^{\pm}$. Hence, $B_2^{\pm}$ is disjoint from $I$.  This completes the proof of \ref{lemma:split:ii}.

    Since $B^{\pm}_1$ is not a complementary arc of $F_{l+1}$, the endpoint of $B^{\pm}_1$ that is different from $c$ has level larger than $l+1$. Therefore the minimum of the levels of the endpoints of $B^{\pm}_1$ us $l+1$. Now, if $w^{\pm}$ is the word associated to $I^{\pm}$, we have $A_{w^{\pm}}\subset B^{\pm}_1$. Hence, the minimum of the levels of the endpoints of $A_{w^{\pm}}$ is also $l+1$. Consider the canonical splitting $w^{\pm}=(v^{\pm}, u^{\pm})$, where $|v^{\pm}|=l=|w|$. Since $A_{w^{\pm}}\subset A_w$, the first $l$ letters of $w^{\pm}$ coincide with those of $w$. We conclude that $v^{\pm}=w$. This proves \ref{lemma:split:iii}.

    Finally, since $I^{+}\subset A_{w^{+}}$ and $c$ is an endpoint of $A_{w^{+}}$, the definition of $w^{+}$ implies that the child of $A_{w^+}$ that has $c$ as an endpoint is contained in $I^+$. The analogue of this statement is true for $I^-$, as required in \ref{lemma:split:iv}.    
\end{proof}

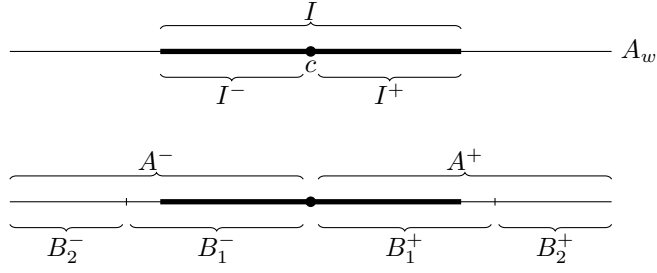
\begin{figure}
    \centering
    \begin{tikzpicture}
        \begin{scope}
            \draw (0,0)--(8,0) node[anchor=west] {$A_w$};
            \draw[line width=2pt] (2,0.0)--(6,0.0);
            \fill (4,0) circle (2pt) node[anchor=north] {$c$};

            \draw [decorate, decoration = {brace}] (2,0.3) --  (6,0.3) node[pos=0.5, anchor=south]{$I$};
            \draw [decorate, decoration = {brace,mirror}] (2,-0.3) --  (3.9,-0.3) node[pos=0.5, anchor=north]{$I^-$};
            \draw [decorate, decoration = {brace,mirror}] (4.1,-0.3) --  (6,-0.3) node[pos=0.5, anchor=north]{$I^+$};
        \end{scope}

        \begin{scope}[shift={(0,-2)}]
            \draw (0,0)--(8,0);
            \draw[line width=2pt] (2,0.0)--(6,0.0);
            \fill (4,0) circle (2pt);

            \draw [decorate, decoration = {brace}] (0,0.3) --  (3.9,0.3) node[pos=0.5, anchor=south]{$A^-$};
            \draw [decorate, decoration = {brace}] (4.1,0.3) --  (8,0.3) node[pos=0.5, anchor=south]{$A^+$};

            \draw [decorate, decoration = {brace,mirror}] (0,-0.3) --  (1.5,-0.3) node[pos=0.5, anchor=north]{$B_2^-$};
            \draw [decorate, decoration = {brace,mirror}] (1.6,-0.3) --  (3.9,-0.3) node[pos=0.5, anchor=north]{$B_1^-$};
            \draw [decorate, decoration = {brace,mirror}] (4.1,-0.3) --  (6.4,-0.3) node[pos=0.5, anchor=north]{$B_1^+$};
            \draw [decorate, decoration = {brace,mirror}] (6.5,-0.3) --  (8,-0.3) node[pos=0.5, anchor=north]{$B_2^+$};

            \draw (1.55,0.05)--(1.55,-0.05); 
            \draw (6.45,0.05)--(6.45,-0.05);
         \end{scope}    
    \end{tikzpicture}
    \caption{Illustration of the proof of Lemma \ref{lemma:split}.}
    \label{fig:split}
\end{figure}

For the proof of Theorem \ref{theorem:extension_generalization} we have to study the behavior of the conjugating homeomorphism $h$ in small arcs $I\subset \mathbb S^1$. Starting with an arbitrary non-degenerate arc $I\subset \mathbb S^1$, we map it quasisymmetrically, by applying a suitable iterate of $f$, to an arc $I'$ that is located near a point $a\in F_1$. This procedure is referred to as the \textit{quasiconformal elevator} and is described more precisely in Lemma \ref{lemma:newelevator}.  Note that there is a fundamental dichotomy: either $I'$ contains the point $a$ (as in  \ref{lemma:newelevator:2} below), or $I'$ lies only on one side of $a$ (as in \ref{lemma:newelevator:1} below).  We remark that the presence of parabolic points does not allow us to blow up the arc $I$ quasisymmetrically to an arc of large diameter, comparable to $1$.

\begin{lemma}[Quasiconformal elevator]\label{lemma:newelevator}
    Suppose that $\mathcal P(f;\{a_0,\dots,a_r\})$ is a primitive Markov partition that satisfies conditions \ref{condition:uv} and \ref{condition:qs}. There exist $M,\gamma\geq 1$ and a distortion function $\eta$ such that for any non-degenerate closed arc $I\subset \mathbb S^1$ one of the following alternatives holds. Let $w$ be the word associated to $I$.
    
    \begin{enumerate}[label=\normalfont ($A$-\roman*)]
    \item\label{lemma:newelevator:1} The arc $I$ contains a complementary arc of $F_{|w|+r+1}$. Then there exist consecutive complementary arcs $I_1,\dots,I_p$ of $F_{|w|+r+1}$, where $p\leq M$, with
    \begin{align*}
        I_{i_0} \subset I \subset A_w= \bigcup_{i=1}^{p} I_i \quad \textrm{for some $i_0\in \{2,\dots,p-1\}$. }
    \end{align*}
    Let $w=(v,u)$ be the canonical splitting of $w$, and set $m=|v|$ and $n=|u|$. Then $A_u$ has a point $a\in F_1$ as an endpoint and 
    \medskip
    \begin{enumerate}[label=\normalfont (\alph*)]
        \item $f^{\circ m}|_{A_w}\colon A_w\to A_u$ is  $\eta$-quasisymmetric or
        \item $f^{\circ m}|_{A_w}\colon A_w\to A_u$ is  $\eta_n$-quasisymmetric under condition \ref{condition:qs_strong}, where $\eta_n(t)=M \max\{t^{K_n}, t^{1/K_n} \}$ for $K_n=1+M(1+\log (n+1))^{-1}$.
    \end{enumerate}
   
\medskip

    \item\label{lemma:newelevator:2} The arc $I$ does not contain any complementary arc of $F_{|w|+r+1}$. Then there exist non-overlapping arcs $I^-,I^+$ such that $I=I^-\cup I^+$ and if $w^{\pm}$ is the word associated to $I^{\pm}$, then the canonical splitting is of the form $w^{\pm}=(w, u^{\pm})$. Moreover, for $m=|w|$, $f^{\circ m}$ maps the common endpoint of $I^-$ and $I^+$ to a point $a\in F_1$ and $I^{\pm}$ contains a complementary arc of $F_{|w^{\pm}|+r+1}$, so the first alternative \ref{lemma:newelevator:1} is applicable to each of $I^{\pm}$. Finally, $f^{\circ m}|_I$ is $\eta$-quasisymmetric.
    \end{enumerate}
In both cases, we have 
\begin{align}\label{lemma:newelevator:diam}  \displaystyle{\diam{f^{\circ m}(I)}\geq M^{-1} (\diam{I})^{\gamma}}.
\end{align}
\end{lemma}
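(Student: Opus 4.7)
The proof combines a combinatorial step using Lemma~\ref{lemma:split} with an analytic application of the distortion theorem, Theorem~\ref{theorem:qcqs}. Let $w$ be the word associated to $I$ and $l=|w|$. Applying Lemma~\ref{lemma:split}: if $I$ contains a complementary arc of $F_{l+r+1}$, part~(I) yields such an arc avoiding the endpoints of $A_w$, and enumerating the complementary arcs of $F_{l+r+1}$ inside $A_w$ (which number at most some $M$ depending only on $r$) produces the consecutive arcs $I_1,\dots,I_p$ with $I_{i_0}\subset I$ for an interior index $i_0$, placing us in case~\ref{lemma:newelevator:1}. Otherwise, part~(II) produces $c\in \inter I\cap F_{l+1}$ splitting $I$ into $I^\pm$ with associated words $w^\pm$ of canonical splitting $(w,u^\pm)$; the image $a:=f^{\circ|w|}(c)\in F_1$ is the low-level endpoint of each $A_{u^\pm}$. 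Lemma~\ref{lemma:primitive} then upgrades the complementary arc of $F_{|w^\pm|+1}$ inside $I^\pm$ (from part~(II)(4)) to a complementary arc of $F_{|w^\pm|+r+1}$ contained in $I^\pm$, so case~\ref{lemma:newelevator:1} applies to each $I^\pm$ separately.

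For case~\ref{lemma:newelevator:1} with canonical splitting $w=(v,u)$, $|v|=m$, $|u|=n$, the plan is to apply Theorem~\ref{theorem:qcqs} to the homeomorphism $f^{\circ m}\colon U_w\to U_u$, which is $K$-quasiconformal by \ref{condition:qs} (and $K_n$-quasiconformal under \ref{condition:qs_strong}; note $K_n\leq 1+C$, so \ref{condition:qs_strong} still implies \ref{condition:qs} with a fixed constant). The key geometric input is a pair of Jordan domains $X\subset Y\subset U_w$ with $A_w\subset X$, $\overline X\subset Y$, and $\mod(Y\setminus \overline X)\geq \delta$, uniformly in $w$. I produce these by fixing, for each $k\in\{0,\dots,r\}$, a universal modular annulus around $A_k$ inside the fixed neighborhood $U_k$, and pulling it back through the uniformly $K$-quasiconformal homeomorphism $f^{\circ(l-1)}\colon U_w\to U_{k_l}$, where $k_l$ is the last letter of $w$; the modulus is distorted by at most $K$, giving the uniform $\delta$. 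Theorem~\ref{theorem:qcqs} applied with the sharp constant $K_n$ for $f^{\circ m}$ then produces the $\eta_n$ of statement~(b), while under the weaker \ref{condition:qs} the same argument with constant $K$ yields the uniform $\eta$ in~(a).

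In case~\ref{lemma:newelevator:2}, the previous step applied to each $I^\pm$ shows that $f^{\circ m}|_{A_{w^\pm}}$ is $\eta$-quasisymmetric (with $m=|w|$ on both sides, by the form of the canonical splitting). The estimate on $I=I^-\cup I^+$ is then obtained by gluing the two one-sided quasisymmetries across the common point $c$: three-point comparisons involving $c$ are controlled directly by the one-sided estimates, while cross-side comparisons reduce via \eqref{quasisymmetry} together with the comparability of $\diam f^{\circ m}(I^\pm)$, which follows from the fact that each $I^\pm$ contains a complementary arc of $F_{|w^\pm|+r+1}$ adjacent to $c$. The lower bound \eqref{lemma:newelevator:diam} is deduced from \eqref{quasisymmetry} applied to $I_{i_0}\subset I\subset A_w$: because $\diam I_{i_0}/\diam A_w$ is bounded below by a uniform power (depending only on the combinatorial structure of the sub-arcs of $A_w$ at level $l+r+1$ and the uniform quasiconformality of $f^{\circ(l-1)}$), the power-type distortion of the quasisymmetry converts this into the required $M^{-1}(\diam I)^{\gamma}$ lower bound.

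The main analytic obstacle is producing the modular annulus $X\subset Y\subset U_w$ of uniformly bounded modulus around the potentially very small arc $A_w$. Without Koebe's distortion theorem available in the quasiconformal setting, this must be done by pulling back a fixed annulus through the long iterate $f^{\circ(l-1)}$, which is legitimate under \ref{condition:qs} (or under \ref{condition:qs_strong} via the bound $K_n\leq 1+C$). The subtle point under \ref{condition:qs_strong} is that the asymptotic improvement $K_n\to 1$ is used \emph{only} at the final application of Theorem~\ref{theorem:qcqs} to $f^{\circ m}$ and not for the modulus pullback, so the sharp exponent in $\eta_n$ is preserved. The gluing in case~\ref{lemma:newelevator:2} is a secondary obstacle, resolved by exploiting the local Markov structure on the two sides of $a\in F_1$.
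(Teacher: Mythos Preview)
Your combinatorial setup via Lemma~\ref{lemma:split} matches the paper's, but the analytic core of your argument has a genuine gap. You propose to apply Theorem~\ref{theorem:qcqs} to $f^{\circ m}\colon U_w\to U_u$ after constructing Jordan domains $X\subset Y\subset U_w$ with $A_w\subset X$ and $\mod(Y\setminus\overline X)\geq\delta$, by pulling back a ``universal modular annulus around $A_k$ inside $U_k$'' via $f^{\circ(l-1)}$. No such annulus exists: by condition~\ref{condition:uv} the regions $U_k$ are pairwise disjoint and are only neighborhoods of $\inter A_k$, so the endpoints $a_k,a_{k+1}$ do \emph{not} lie in $U_k$. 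Consequently $A_k\not\subset U_k$, and more generally $A_w\not\subset U_w$ (the endpoint of $A_w$ of minimum level maps under $f^{\circ m}$ to a point of $F_1$, which lies in no $U_j$). Your pair $X\subset Y$ therefore cannot be produced.

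The paper avoids this obstruction by moving one level up and building the annulus on the \emph{image} side. It applies Theorem~\ref{theorem:qcqs} to $f^{\circ m}\colon U_v\to V_j$ (with $v$ from the canonical splitting $w=(v,u)$), using Lemma~\ref{lemma:canonical_split} to guarantee $A_w\subset\inter A_v\subset U_v$. Since $A_u=f^{\circ m}(A_w)$ has an endpoint in $F_1$, it lies in the fixed compact set $B_j\subset V_j$ (the union of complementary arcs of $F_{r+1}$ compactly contained in $f(\inter A_j)$), and a fixed annulus $V_j'\setminus\overline{B_j'}$ with modulus $\geq\delta$ is chosen once and for all. This is exactly the point where the canonical splitting earns its keep. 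Similarly, in case~\ref{lemma:newelevator:2} the paper does not glue one-sided quasisymmetries as you suggest; instead Lemma~\ref{lemma:split}\,\ref{lemma:split:ii} provides buffer arcs separating $I$ from the endpoints of $A_w$, so $f^{\circ m}(I)\subset B_j$ and Theorem~\ref{theorem:qcqs} applies directly to $f^{\circ m}|_{U_w}$, yielding quasisymmetry on all of $I$ in one stroke. Your gluing argument is both unnecessary and rests on the same flawed construction for each $A_{w^\pm}$.
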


\begin{proof}
    For each $j\in \{0,\dots,r\}$, let $B_j$ be the union of all complementary arcs of $F_{r+1}$ that are (compactly) contained in the open arc $f(\inter A_j)\subset V_j$. Let $B_j',V_j'$ be topological disks such that $B_j\subset B_j'\subset \overline{B_j'}\subset V_j'\subset V_j$. We set
    $$ \delta =\min\{ \mod(V_j'\setminus \overline{B_j'}) : j=0,\dots,r \}.$$

    Suppose that $I$ satisfies alternative \ref{lemma:newelevator:1}. The arc $A_w$ is the union of at most $(r+1)^{r+1}$ consecutive complementary arcs of $F_{|w|+r+1}$. By Lemma \ref{lemma:split} \ref{lemma:split:ai}, one of these arcs that is disjoint from the endpoints of $A_w$ is contained in $I$. This completes the proof of the first part of \ref{lemma:newelevator:1} regarding the inclusions. 
    
    Consider the canonical splitting $w=(v,u)$, and set $m=|v|$ and $n=|u|$. Observe that the case $m=0$ is trivial, since then $f^{\circ m}$ is the identity map. Suppose that $m\geq 1$, in which case we must have $n\geq 1$; see Section \ref{section:canonical_splitting}.
    
    By \ref{condition:uv}, $f^{\circ m}$ is a homeomorphism from $U_v$ onto a region $V_j$, $j\in \{0,\dots,r\}$. By Lemma \ref{lemma:canonical_split}, we have $A_w\subset \inter A_v\subset U_v$. Thus,  $A_w$ is mapped under $f^{\circ m}$ onto $A_u\subset f(\inter A_j)\subset  V_j$. In addition, since $m+1$ is the minimum of the levels of the endpoints of $A_w$, the arc $A_u$ has a point of $F_1$ as an endpoint. Since $B_j$ contains the union of complementary arcs of $F_{r+1}$ that are contained in the open arc $f(\inter A_j)\subset V_j$, we conclude that $A_u\subset B_j$. Under condition \ref{condition:qs}, $f^{\circ m}|_{U_v}$ is $K$-quasiconformal for some uniform $K\geq 1$. Denote by $\omega$ the inverse of $f^{\circ m}|_{U_v}$, restricted to $B_j$. By Theorem \ref{theorem:qcqs}, $\omega$ and its inverse are $\eta$-quasisymmetric, where 
    \begin{align}\label{lemma:newelevator:eta}
        \eta(t)= C(K,\delta) \max\{t^K,t^{1/K}\}.
    \end{align}
    In particular, $f^{\circ m}|_{A_w}$ is $\eta$-quasisymmetric. If condition \ref{condition:qs_strong} is assumed instead, the same argument as above gives the desired conclusion with $\eta_n$ in place of $\eta$. 
    
    Next, assume that $I$ satisfies the second alternative \ref{lemma:newelevator:2}. By Lemma \ref{lemma:split} \ref{lemma:split:i}, there exist non-degenerate and non-overlapping arcs $I^-,I^+$ such that $I=I^-\cup I^+$, so that the common endpoint $c$ of $I^-$ and $I^+$ lies in $F_{m+1}$, where $m=|w|$.  By Lemma \ref{lemma:split} \ref{lemma:split:iii}, the canonical splitting of $w^{\pm}$ has the form $w^{\pm}=(w, u^{\pm})$. Note that $f^{\circ m}$ maps $c$ to a point of $F_1$. By Lemma \ref{lemma:split} \ref{lemma:split:iv}, $I^{\pm}$ contains a complementary arc of $F_{|w^{\pm}|+r+1}$, so the first alternative is applicable to each of $I^{\pm}$. 
        
    Next, we justify that $f^{\circ m}|_I$ is quasisymmetric. By \ref{condition:uv}, $f^{\circ m}$ is a homeomorphism from $U_w$ onto a region $V_j$, $j\in \{0,\dots,r\}$.  By Lemma \ref{lemma:split} \ref{lemma:split:ii}, $I$ is contained in a compact subset of $A_w$ that is separated from the endpoints of $A_w$ by two complementary arcs of $F_{|w|+r+1}$. Thus, $f^{\circ m}$ maps $I$ homeomorphically onto an arc $I'$ that is separated from the endpoints of the arc $f^{\circ m} (\inter A_w)=f(\inter A_j)$ by two complementary arcs of $F_{r+1}$. In particular, $I'\subset B_j$. By condition \ref{condition:qs}, $f^{\circ m}|_{U_w}$ is $K$-quasiconformal. Denote by $\omega$ the inverse of $f^{\circ m}|_{U_w}$, restricted to $B_j$.  Using Theorem \ref{theorem:qcqs} as above, we conclude that $\omega$ and its inverse are $\eta$-quasisymmetric with $\eta$ as in \eqref{lemma:newelevator:eta}. In particular, $f^{\circ m}|_{I}$ is $\eta$-quasisymmetric.
    
    We finally show \eqref{lemma:newelevator:diam}. In both alternatives \ref{lemma:newelevator:1} and \ref{lemma:newelevator:2}, the map $\omega$ is $\eta$-quasisymmetric on $B_j$ and $\omega(B_j)\supset I$.  By \eqref{quasisymmetry}, we have
     $$ \frac{\diam I}{\diam \omega(B_j)}\lesssim \max\left\{ \left(\frac{\diam \omega^{-1}(I)}{\diam B_j}\right)^{K} , \left(\frac{\diam \omega^{-1}(I)}{\diam B_j}\right)^{1/K} \right\}.$$
     Note that $\diam \omega(B_j) \lesssim 1$, $\diam B_j \simeq 1$, and $\diam \omega^{-1}(I) \lesssim 1$, where $\omega^{-1}(I)=f^{\circ m}(I)$. Hence, 
     $$\diam I \lesssim (\diam f^{\circ m}(I))^{1/K}.$$
     This completes the proof. 
\end{proof}

\subsection{Diameters of dynamical and non-dynamical arcs}The next lemma supplements the properties of hyperbolic and parabolic points; see Section \ref{section:hyperbolic_parabolic_points}. 

\begin{lemma}[Diameters of consecutive arcs]\label{lemma:qs_i2}
    Suppose that $\mathcal P(f;\{a_0,\dots,a_r\})$ is a primitive Markov partition that satisfies conditions \ref{condition:uv} and \ref{condition:qs}. For each $a\in F_1$, $p\in \N$ with $p\geq 2$, and for all sufficiently large $N_0\in \N$, there exists a distortion function $\eta$ and a constant $L\geq 1$ with the following properties. If $I_1,\dots,I_p$ are consecutive complementary arcs of $F_n$, $n\geq 1$, such that $a$ is an endpoint of $I_1$, then
    $$L^{-1}\leq \frac{\diam I_i}{\diam I_j}\leq L$$
    for $i,j\in \{2,\dots,p\}$. Furthermore, if $n\geq N_0$, then the map $f^{\circ(n-N_0)}$ is $\eta$-quasisymmetric on $\bigcup_{i=2}^p I_i$. 
\end{lemma}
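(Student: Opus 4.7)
The plan is to rescale $I_1,\dots,I_p$ dynamically to a bounded-combinatorics scale via an iterate of $f$, and then transfer quasisymmetric control back to the original arcs using the distortion theorem \ref{theorem:qcqs}. The point is that although $I_1$ can be much larger than $I_2,\dots,I_p$ in a ``parabolic'' situation, the arcs $I_2,\dots,I_p$ always sit inside a common coarse complementary arc near $a$, and enough iterates of $f$ push the configuration into one of only finitely many patterns at a fixed generation.

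\smallskip

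\emph{Combinatorial setup.} Choose $N_0\in\N$ so large that $2^{(N_0-1)/r}\geq 2p$; explicitly, $N_0=r\lceil\log_2(2p)\rceil+1$ suffices. Iterating Lemma \ref{lemma:primitive} $(N_0-1)/r$ times, every complementary arc of $F_{n-N_0+1}$ contains at least $2p$ complementary arcs of $F_n$. Applying this to the complementary arc $A_v$ of $F_{n-N_0+1}$ with $a$ as endpoint on the relevant side, I get $I_1,\dots,I_p\subset A_v$ with $|v|=n-N_0+1$. The same iteration applied at the first generation shows that every level-$1$ arc contains at least $2p$ complementary arcs of $F_{N_0}$, which will provide at least one intervening $F_{N_0}$-arc between $J'':=\bigcup_{i=2}^{p}I_i$ and each endpoint of the level-$1$ arc appearing below.

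\smallskip

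\emph{Iterate and finite configurations.} Let $g=f^{\circ(n-N_0)}=f^{\circ(|v|-1)}$. By \ref{condition:uv} and the nesting $U_{wj}\subset U_w$, $g$ is a homeomorphism from $U_v$ onto $U_{k_{|v|}}$ sending $A_v$ onto the level-$1$ arc $A_{k_{|v|}}$ and each $I_k$ onto the $k$-th complementary arc of $F_{N_0}$ after $g(a)$ inside $A_{k_{|v|}}$. Condition \ref{condition:qs} ensures $g|_{U_v}$ is $K$-quasiconformal. Since $F_1$ is $f$-invariant, $g(a)=f^{\circ(|v|-1)}(a)\in F_1$ takes only finitely many values as $n$ varies, and $k_{|v|}$ is then determined by $g(a)$ together with the orientation of $g$; hence $(g(I_1),\dots,g(I_p))$ realizes only finitely many combinatorial configurations. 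This yields a constant $C\geq 1$, depending only on $p,N_0,r$, and the partition, such that $C^{-1}\leq \diam g(I_i)/\diam g(I_j)\leq C$ for $i,j\in\{2,\dots,p\}$ and $\diam g(J'')\geq C^{-1}$.

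\smallskip

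\emph{Quasisymmetry and conclusion.} The main technical point, given that Koebe's theorem is unavailable, is producing a uniform modulus bound. By Step 1, the set $g(J'')$ is compactly contained in $\inter A_{k_{|v|}}\subset U_{k_{|v|}}$, with at least one intervening $F_{N_0}$-arc separating it from each endpoint of $A_{k_{|v|}}$. For each of the finitely many configurations of $(g(a),k_{|v|})$, I pre-select a topological disk $D$ with $g(J'')\subset D$, $\bar D\subset U_{k_{|v|}}$, and $\mod(U_{k_{|v|}}\setminus \bar D)\geq \delta$, taking $\delta>0$ uniform by finiteness. Applying Theorem \ref{theorem:qcqs} to the $K$-quasiconformal $g^{-1}\colon U_{k_{|v|}}\to U_v$ then gives that $g^{-1}|_D$, and hence $g|_{J''}$, is $\eta$-quasisymmetric with $\eta=\eta(K,\delta)$. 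This proves the quasisymmetry assertion; combining with inequality \eqref{quasisymmetry} applied to $g(I_i)\subset g(J'')$ and $g(I_j)\subset g(J'')$ and the bounds from Step 2 yields $L^{-1}\leq \diam I_i/\diam I_j\leq L$. For $1\leq n<N_0$ the set $F_n$ is finite, so only finitely many configurations of $(I_1,\dots,I_p)$ arise and the diameter ratios are bounded by some $L_0=L_0(N_0)$.
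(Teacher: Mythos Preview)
Your proof is correct and follows essentially the same approach as the paper: iterate $f^{\circ(n-N_0)}$ to push the configuration to a fixed generation, observe that only finitely many target patterns occur, and apply Theorem~\ref{theorem:qcqs} with a uniform modulus bound. The only bookkeeping difference is that the paper introduces an auxiliary arc $I_{p+1}$ and works with the word $w$ associated to $\bigcup_{i=1}^{p+1}I_i$ (so that excluding $I_1$ and $I_{p+1}$ separates $\bigcup_{i=2}^p I_i$ from both endpoints of the target arc), whereas you instead take $N_0$ large enough that the level-$(n-N_0+1)$ arc $A_v$ contains at least $2p$ arcs of $F_n$, providing the buffer on the far side automatically.
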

In essence, this lemma allows us to blow up quasisymmetrically the arcs $I_2,\dots,I_p$ to large complementary arcs of $F_{N_0}$; this type of conclusion is not provided by the quasiconformal elevator in Lemma \ref{lemma:newelevator}. Also, observe that the arc $I_1$ is intentionally excluded. In fact, we cannot guarantee that $I_1$ can be blown up quasisymmetrically to a large arc due to the presence of parabolic points.  

\begin{proof}
    Let $p\in \N$, $p\geq 2$, and consider $N_0\in \N$ so that
    $$N_0> \max\{1+\log (p+1) /\log (r+1),1+r(\log_2 (p+1)+1)\}.$$
    Also, let $n\geq N_0$ and $I_1,\dots,I_p,I_{p+1}$ be consecutive complementary arcs of $F_n$ so that $a\in F_1$ is an endpoint of $I_1$.   Suppose first that $\bigcup_{i=1}^{p+1} I_i$ is not a strict subset of a complementary arc of $F_1$. Then the number of complementary arcs of $F_n$ that are contained in a complementary arc of $F_1$ is at most $p+1$. Hence, $(r+1)^{n-1} \leq p+1$ and $n\leq 1+\log (p+1) /\log (r+1)<N_0$. This is a contradiction. 

    Thus, $I=\bigcup_{i=1}^{p+1} I_i$ is a strict subset of a complementary arc of $F_1$ having $a$ as an endpoint. Let $w$ be the word associated to $I$ and write $w=(j_1,\dots,j_l)$. Note that $n> l$ and there exists a complementary arc $A$ of $F_{l+1}$ that is contained in $I$ and has $a$ as an endpoint. By Lemma \ref{lemma:primitive}, for $i\in \N\cup \{0\}$, $A$ contains at least $2^i$ complementary arcs of $F_{l+1+ir}$. If $i$ is the largest index with $l+1+ir\leq n$, then $A$ contains at least $2^{(n-l-1)/r-1}$ complementary arcs of $F_n$. On the other hand, $A$ contains at most $p+1$ complementary arcs of $F_n$. We conclude that $n-l<1+r(\log_2(p+1)+1)$. In particular, $n-N_0<l<n$. 
    
    By condition \ref{condition:qs}, there exists a uniform constant $K\geq 1$ such that $f^{\circ (n-N_0)}|_{U_w}$ is a $K$-quasiconformal map onto $U_{j_{k}\dots j_l}$, where $k=n-N_0+1$. Note that $f^{\circ (n-N_0)}$ maps $J=\bigcup_{i=2}^p I_i$ into a fixed compact subset of $U_{j_{k}\dots j_l}$, namely into the union of all complementary arcs of $F_{N_0}$ that are contained in $\inter A_{j_k\dots j_l}$. It is important here that we exclude the first and last arcs $I_1$ and $I_{p+1}$. By Theorem \ref{theorem:qcqs}, $f^{\circ (n-N_0)}|_J$ is $\eta$-quasisymmetric for some distortion function $\eta$, depending on $U_{j_k\dots j_l}$.  Since $l-k< N_0-1$, there is a bounded number of possibilities for the region $U_{j_{k}\dots j_l}$, so $\eta$ may be chosen to be a uniform distortion function.  This proves the second conclusion of the lemma.

    The first conclusion of the lemma follows the fact that $f^{\circ (n-N_0)}|_J$ is quasisymmetric when $n\geq N_0$, combined with \eqref{quasisymmetry}, and the fact that there are finitely many complementary arcs of $F_n$ when $n\leq N_0$.
\end{proof}

The definitions of hyperbolic and parabolic points provide diameter bounds for {dynamical arcs}, i.e., complementary arcs of $F_n$. The next technical lemma provides diameter estimates for a \textit{non-dynamical arc} $I$ that is located near a point $a\in F_1$.

\begin{lemma}[Diameters of non-dynamical arcs]\label{lemma:one_sided_estimates}
Suppose that $\mathcal P(f;\{a_0,\dots,a_r\})$ is a primitive Markov partition that satisfies conditions \ref{condition:hp}, \ref{condition:uv}, and \ref{condition:qs}. For each $a\in F_1$, $p\in \N$ with $p\geq 2$, and $M\geq 1$, there exists $L\geq 1$ such that the following statement is true. If $I_1,\dots,I_p$ are consecutive complementary arcs of $F_n$, $n\geq 1$, $a$ is an endpoint of $I_1$, and $I_1 \subset \arc{[a,z_0]}$ for some $z_0\neq a$, then for each closed arc $I\subset \mathbb S^1$ with 
\begin{align*}
I\subset \bigcup_{i=1}^p I_i, \quad I\cap \bigcup_{i=2}^p I_i\neq \emptyset,\quad \textrm{and}\quad  \diam{I} \geq M^{-1} \diam{I_2}
\end{align*}
one of the following alternatives holds. 
\begin{itemize}
\item If $a^+$ is parabolic and $\alpha=1/N(a^+)$, then	
\begin{align*}
L^{-1} n^{-\alpha-1}\leq \frac{\diam{I}}{\min \{l+1,n\}}\leq L n^{-\alpha-1},
\end{align*}
where $l\in \N\cup \{0\}$ is the smallest integer, if there exists one, such that there exists a complementary arc of $F_{n+l}$ contained in $\arc{[a,z_0]}$ that has $a$ as an endpoint and does not intersect $I$; if no such integer exists, we set $l=\infty$.
\item If $a^+$ is hyperbolic, then 
$$L^{-1}\lambda(a^+)^{-n}\leq \diam{I} \leq L\lambda(a^+)^{-n}.$$ 
\end{itemize}
The corresponding estimates hold for $a^-$ if $I_1 \subset \arc{[z_0,a]}$ for some $z_0\neq a$.
\end{lemma}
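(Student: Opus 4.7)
The argument naturally splits according to whether $a^+$ is hyperbolic or parabolic; the case of $a^-$ follows by symmetry. The hyperbolic case is essentially immediate: Definition~\ref{definition:hyperbolic} gives $\diam I_1, \diam I_2 \simeq \lambda(a^+)^{-n}$, and Lemma~\ref{lemma:qs_i2}, applied with $p+1$ consecutive arcs so that the comparable range includes all of $I_2,\dots,I_p$, propagates this to $\diam I_i \simeq \lambda(a^+)^{-n}$ for every $i\in\{1,\dots,p\}$ with constants depending only on $p$. The lower bound $\diam I \geq M^{-1}\diam I_2$ is then the hypothesis, while the upper bound follows from $\diam I \leq \sum_{i=1}^p \diam I_i \lesssim p\lambda(a^+)^{-n}$.

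For the parabolic case, set $\alpha = 1/N(a^+)$. Definition~\ref{definition:parabolic} together with Lemma~\ref{lemma:qs_i2} give $\diam I_1 \simeq n^{-\alpha}$ and $\diam I_i \simeq n^{-\alpha-1}$ for $i\in\{2,\dots,p\}$. The additional key input is the same estimate applied at \emph{every} level: for each integer $l\geq 0$, letting $J_l$ be the unique complementary arc of $F_{n+l}$ contained in $\arc{[a,z_0]}$ with $a$ as an endpoint, Definition~\ref{definition:parabolic} applied with $n$ replaced by $n+l$ yields $\diam J_l \simeq (n+l)^{-\alpha}$, with constants uniform in $l$.

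Let $b$ be the endpoint of $I$ closer to $a$, $b'$ the other endpoint, and set $d=|b-a|$. Since $I$ meets some $I_j$ with $j\geq 2$, the above estimates give $|b'-a|\simeq n^{-\alpha}$. The integer $l$ in the statement is the smallest index for which $J_l\cap I=\emptyset$, which forces the far endpoint of $J_l$ to lie strictly on the $a$-side of $b$; consequently $d\simeq (n+l)^{-\alpha}$ when $l<\infty$, and $l=\infty$ precisely when $a\in I$. Hence $\diam I = |b'-a| - d \simeq n^{-\alpha} - (n+l)^{-\alpha}$ (with the convention $(n+\infty)^{-\alpha}=0$). The claimed bound then follows from a short case split: if $l=0$, then $I\subset\bigcup_{i\geq 2}I_i$, and the hypothesis $\diam I\geq M^{-1}\diam I_2$ together with $\diam I\leq\sum_{i\geq 2}\diam I_i$ give $\diam I\simeq n^{-\alpha-1}=\min\{l+1,n\}\cdot n^{-\alpha-1}$; if $1\leq l<n$, the mean value theorem applied to $x\mapsto x^{-\alpha}$ on $[n,n+l]$ yields $n^{-\alpha}-(n+l)^{-\alpha}\simeq l\cdot n^{-\alpha-1}\simeq(l+1)n^{-\alpha-1}$; and if $l\geq n$ (including $l=\infty$), then $(n+l)^{-\alpha}\leq 2^{-\alpha} n^{-\alpha}$, so $\diam I\simeq n^{-\alpha}=n\cdot n^{-\alpha-1}$, matching $\min\{l+1,n\}=n$.

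The main obstacle is the combinatorial identification of the integer $l$ in terms of the geometric position of the non-dynamical endpoint $b$, together with the bookkeeping needed to keep all implied constants uniform in $n$ and in the arc $I$, depending only on $a$, $p$, and $M$. The latter rests on applying Lemma~\ref{lemma:qs_i2} uniformly across levels so that the estimates \eqref{inequality:parabolic_1} of Definition~\ref{definition:parabolic}, which are formulated only for the first two consecutive arcs at the endpoint $a$, can be transported to all relevant scales near $a$; the hypothesis $\diam I\geq M^{-1}\diam I_2$ is precisely what is needed to secure the lower bound in the borderline case $l=0$, where the mean-value estimate degenerates.
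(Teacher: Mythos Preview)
Your treatment of the hyperbolic case is correct and matches what the paper has in mind. The parabolic case, however, has a genuine gap at the line ``Hence $\diam I = |b'-a| - d \simeq n^{-\alpha} - (n+l)^{-\alpha}$.'' You have established $|b'-a|\simeq n^{-\alpha}$ and $d\simeq (n+l)^{-\alpha}$, each only up to the multiplicative constant $L$ coming from Definition~\ref{definition:parabolic}; it does \emph{not} follow that the difference is comparable to $n^{-\alpha}-(n+l)^{-\alpha}$. Concretely, for $1\le l<n$ one has $n^{-\alpha}-(n+l)^{-\alpha}\simeq l\,n^{-\alpha-1}$, while all that your estimates yield for the upper bound is $|b'-a|-d \le L\,n^{-\alpha} - L^{-1}(n+l)^{-\alpha}$, and the right-hand side is of order $n^{-\alpha}$, not $l\,n^{-\alpha-1}$. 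The same obstruction ruins the lower bound when $l\ge n$: you need $|b'-a|-d\gtrsim n^{-\alpha}$, but $L^{-1}n^{-\alpha}-L(n+l)^{-\alpha}$ may well be negative. Your subsequent case analysis correctly computes the asymptotics of the \emph{target} expression $n^{-\alpha}-(n+l)^{-\alpha}$, but not of $\diam I$ itself.

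The fix, which is what the paper intends (see the sentence following the statement, and \cite{LyubichMerenkovMukherjeeNtalampekos:David}*{Lemma 4.20}), is to abandon the difference and estimate $\diam I$ by \emph{summing} the lengths of dynamical sub-arcs. For $l\ge 2$ the arc $I$ contains $J_0\setminus J_{l-1}=\bigcup_{k=1}^{l-1}(J_{k-1}\setminus J_k)$, and each $J_{k-1}\setminus J_k$ is a union of at most $r$ complementary arcs of $F_{n+k}$, each of diameter $\simeq (n+k)^{-\alpha-1}$ by Definition~\ref{definition:parabolic} together with Lemma~\ref{lemma:qs_i2}; hence $\diam I\gtrsim \sum_{k=1}^{l-1}(n+k)^{-\alpha-1}\simeq \min\{l,n\}\,n^{-\alpha-1}$. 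The hypothesis $\diam I\ge M^{-1}\diam I_2$ handles $l\in\{0,1\}$. The upper bound is obtained the same way, using $I\subset (J_0\setminus J_l)\cup\bigcup_{i\ge2}I_i$ and summing the corresponding pieces.
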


The proof of Lemma \ref{lemma:one_sided_estimates} is identical to the proof of Lemma 4.20 in \cite{LyubichMerenkovMukherjeeNtalampekos:David} and we omit it. Conditions \ref{condition:uv} and \ref{condition:qs}, are only used to apply Lemma \ref{lemma:qs_i2}, which guarantees that $\diam I_i\simeq \diam I_2$ for $i\in \{2,\dots,p\}$. The initial assumptions about the relative position of $I$ and the relative size of $I$, imply that the diameter of $I$ can be estimated using the dynamical complementary arcs of $F_n$. Specifically, the estimates for $\diam I$ follow from the hyperbolic and parabolic estimates for the point $a\in F_1$, as provided by Definitions \ref{definition:hyperbolic} and \ref{definition:parabolic}. 

The next lemma guarantees that if Lemma \ref{lemma:one_sided_estimates} is applicable to an arc $I$, then it is also applicable to the image $h(I)$ under a homeomorphism conjugating $f$ to $g$.

\begin{lemma}\label{lemma:qs_i}
    Suppose that $f$ and $g$ are expansive and satisfy conditions \ref{condition:uv} and \ref{condition:qs}. Let $h\colon \mathbb S^1\to \mathbb S^1$ be a homeomorphism that conjugates $f$ to $g$ and satisfies $h(a_k)=b_k$, $k\in \{0,\dots,r\}$. For each  $a\in F_1$, $p\in \N$ with $p\geq 2$, and $M\geq 1$, there exists $L\geq 1$ such that the following statement is true. If $I_1,\dots,I_p$ are consecutive complementary arcs of $F_n$, $n\geq 1$, and $a$ is an endpoint of $I_1$, then for each closed arc $I\subset \mathbb S^1$ with 
\begin{align*}
I\subset \bigcup_{i=1}^p I_i, \quad  I\cap \bigcup_{i=2}^p I_i\neq \emptyset, \quad \textrm{and}\quad  \diam{I} \geq M^{-1} \diam{I_2}
\end{align*}
we have $\diam{h(I)} \geq L^{-1} \diam{h(I_2)}$.
\end{lemma}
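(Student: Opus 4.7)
The plan is to combine Lemma \ref{lemma:qs_i2}, applied separately to $f$ and $g$, with a case analysis on the position of $I$ within $\bigcup_{i=1}^p I_i$. First I would invoke Lemma \ref{lemma:qs_i2} for $f$ to obtain $\diam I_i\simeq \diam I_2$ for $i\in\{2,\dots,p\}$ and, for $n\geq N_0=N_0(p,a)$, uniform quasisymmetry of $\phi^f:=f^{\circ(n-N_0)}$ on $\bigcup_{i=2}^p I_i$. Since $h$ is a conjugacy with $h(a_k)=b_k$, the images $I_i':=h(I_i)$ are consecutive complementary arcs of $F_n$ for $g$ with $b=h(a)\in F_1$ at the endpoint of $I_1'$, so Lemma \ref{lemma:qs_i2} applied to $g$ yields the analogous estimates with $\phi^g:=g^{\circ(n-N_0)}$. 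The range $n<N_0$ is handled by a finiteness argument: there are finitely many admissible configurations of level-$n$ arcs for $n<N_0$, and on each the set of arcs $I$ satisfying the hypotheses is compact while $I\mapsto \diam h(I)/\diam h(I_2)$ is continuous and positive, hence attains a positive infimum.

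For $n\geq N_0$, if $I\supset I_k$ for some $k\in\{2,\dots,p\}$, then $\diam h(I)\geq \diam h(I_k)\simeq \diam h(I_2)$; otherwise $I$ lies in at most two consecutive arcs of $\{I_1,\dots,I_p\}$. In the sub-case $I\subset \bigcup_{i=2}^p I_i$, the quasisymmetry of $\phi^f$ together with $\diam I\geq M^{-1}\diam I_2$ and the comparability of the $\diam I_i$ gives $\diam \phi^f(I)\geq c_1\diam \phi^f(I_2)\geq c_1\delta_0$, where $\delta_0$ is the minimum diameter among complementary arcs of $F_{N_0}$ for $f$. Expansiveness (property \ref{expansive:diameters}) then supplies a fixed $K\in \mathbb N$, depending only on uniform constants, such that every complementary arc of $F_{N_0+K}$ has diameter less than $c_1\delta_0/3$; hence $\phi^f(I)$ contains a full complementary arc $A_{v''}$ of $F_{N_0+K}$, and pulling back by $\phi^f$ places a full arc $A_{\tilde w}$ of $F_{n+K}$ inside $I$. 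Then $h(I)\supset A_{\tilde w}^g$, and applying the quasisymmetry of $\phi^g$ on $I_k'$ (where $k\in\{2,\dots,p\}$ is the index with $A_{\tilde w}\subset I_k$) compares $\diam A_{\tilde w}^g/\diam h(I_k)$ to $\diam A_{v''}^g/\diam A_{v_k}^g$, a ratio of diameters of arcs at fixed finite levels for $g$, hence uniformly bounded below. Combined with $\diam h(I_k)\simeq \diam h(I_2)$, this yields the desired estimate.

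The remaining, hardest sub-case is $I\subset I_1\cup I_2$ with $I\cap I_1\neq \emptyset$, so $I$ contains the common endpoint $v_2$. Writing $I=I'\cup J$ with $I'\subset I_1$, $J\subset I_2$, the triangle inequality at $v_2$ forces $\max(\diam I',\diam J)\geq (2M)^{-1}\diam I_2$. If $\diam J\geq (2M)^{-1}\diam I_2$, then $J$ satisfies the hypotheses of the previous sub-case (with $M$ replaced by $2M$), and $\diam h(I)\geq \diam h(J)$ suffices. The genuinely delicate situation is $\diam I'\geq (2M)^{-1}\diam I_2$, in which the bulk of $I$ sits inside $I_1$; since Lemma \ref{lemma:qs_i2} excludes $I_1$ from its quasisymmetry conclusion, one cannot directly blow up an arc sitting in $I_1$. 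To address this I would perform a recursive descent from $v_2$ inside $I_1$: for each $k\geq 1$, applying Lemma \ref{lemma:qs_i2} at level $n+k$ with special endpoint $a$ and a sufficiently long initial segment of consecutive level-$(n+k)$ arcs shows that the $k$-fold nested child $C_k\subset I_1$ adjacent to $v_2$ has diameter comparable to that of any level-$(n+k)$ child of $I_2$. By primitivity (Lemma \ref{lemma:primitive}), $I_2$ has at least $2^{\lfloor k/r\rfloor}$ such children, so $\diam C_k\lesssim \diam I_2\cdot 2^{-\lfloor k/r\rfloor}$ decays geometrically in $k$. Choosing $K=K(M,r)$ bounded so that $\diam I'\geq \diam C_K$ with margin forces $I'\supset C_K$, and the analogous estimates for $g$ (which use only the combinatorial Markov structure, preserved by $h$) give $\diam h(C_K)\gtrsim \diam h(I_2)/(r+1)^K$, a uniform fraction of $\diam h(I_2)$. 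Combining, $\diam h(I)\geq \diam h(I')\geq \diam h(C_K)\gtrsim \diam h(I_2)$, completing the proof. The main obstacle is precisely this last sub-case: the lack of direct quasisymmetric control on $I_1$ is circumvented by passing to a bounded combinatorial depth at which the nested child of $I_1$ near $v_2$ lies within the scope of Lemma \ref{lemma:qs_i2} applied at the deeper level.
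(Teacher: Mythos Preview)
Your approach is workable in its broad outline but substantially more laborious than the paper's, and the ``recursive descent'' in your hardest sub-case has a genuine quantitative gap. When you apply Lemma~\ref{lemma:qs_i2} at level $n+k$ with ``a sufficiently long initial segment,'' the required length $p$ of that segment grows with $k$: the nested child $C_k$ adjacent to $v_2$ sits at position $m_k$ from $a$, where $m_k$ is the number of level-$(n+k)$ sub-arcs of $I_1$, and by primitivity $m_k\ge 2^{\lfloor k/r\rfloor}$. The constant $L$ produced by Lemma~\ref{lemma:qs_i2} depends on $p$ (through the choice of $N_0$ and the finite collection of level-$N_0$ arcs), so your asserted bound $\diam C_k\lesssim \diam I_2\cdot 2^{-\lfloor k/r\rfloor}$ carries an implicit constant that itself depends on $k$. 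Without an a priori bound on how that constant grows, you cannot conclude that $\diam C_K$ eventually drops below $(2M)^{-1}\diam I_2$ for a \emph{fixed} $K=K(M,r)$. The same issue recurs on the $g$ side when you claim $\diam h(C_K)\gtrsim \diam h(I_2)/(r+1)^K$.

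The paper sidesteps the entire difficulty with a single refinement. By Lemma~\ref{lemma:primitive}, each $I_i$ contains at least two complementary arcs of $F_{n+r}$; list the resulting level-$(n+r)$ arcs as $J_1,\dots,J_{p'}$ from $a$, so that $J_1\cup J_2\subset I_1$ and $p'\le p(r+1)^{r}$ is uniformly bounded. Now only $J_1$ is adjacent to $a$, and $J_2$ acts as a buffer inside $I_1$: if $I$ meets $J_1$ then connectivity and $I\cap\bigcup_{i\ge 2}I_i\neq\emptyset$ force $J_2\subset I$; otherwise $I\subset\bigcup_{i\ge 2}J_i$. In either case $J\coloneqq I\cap\bigcup_{i\ge 2}J_i$ satisfies $\diam J\simeq\diam J_2\simeq\diam I_2$, and one applies Lemma~\ref{lemma:qs_i2} \emph{once}, at level $n+r$ with the fixed value $p'$, to both $f$ and $g$: blow up $J$ and $J_2$ to scale $\simeq 1$ via $f^{\circ(n+r-N_0)}$, transfer through the conjugacy $h$ (which is a homeomorphism at fixed level $N_0$), and contract via the quasisymmetry of $g^{\circ(n+r-N_0)}$ to get $\diam h(J)\simeq\diam h(J_2)\simeq\diam h(I_2)$. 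Your case analysis and recursion are thus unnecessary; the single refinement by $r$ levels moves the problematic endpoint-adjacent arc entirely out of the way.
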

    \begin{proof}
        By Lemma \ref{lemma:primitive} each of $I_1,\dots,I_p$ contains at least two complementary arcs of $F_{n+r}$. Denote by $J_i$, $i\in \{1,\dots,p'\}$, the family of consecutive complementary arcs of $F_{n+r}$ that are contained in $\bigcup_{i=1}^{p}I_i$, where $a$ is an endpoint of $J_1$ and $J_1\cup J_2\subset I_1$. Also, let $i_0\in \{3,\dots,p'\}$ such that $J_{i_0}\subset I_2$. Note that $p'\leq p(r+1)^{r-1}$. By Lemma \ref{lemma:qs_i2}, we have
        \begin{align*}
            \diam I_2 &\leq \diam\bigg( \bigcup_{i=2}^p I_i\bigg)\leq \diam  \bigg(\bigcup_{i=2}^{p'}J_i\bigg) \simeq \diam J_2 \simeq \diam J_{i_0} \lesssim \diam I_2.
        \end{align*}
        The same is true for the images under $h$, hence
        \begin{align}\label{lemma:qs_i_i2}
            \diam I_2\simeq \diam J_2 \quad \textrm{and} \quad \diam h(I_2)\simeq \diam h(J_2).
        \end{align}
        
        Let $I\subset \mathbb S^1$ be a closed arc as in the statement, in particular, satisfying $\diam{I} \geq M^{-1} \diam{I_2}$. If $I\cap J_1\neq \emptyset$, then we have $J_2\subset I$, since $I\cap \bigcup_{i=2}^p I_i\neq \emptyset$. If $I\cap J_1=\emptyset$, then $I\subset \bigcup_{i=2}^{p'}J_i$. In both cases, we have
        \begin{align}\label{lemma:qs_k2}
            \diam J\simeq \diam J_2, \quad \textrm{where} \quad J= I\cap \bigcup_{i=2}^{p'}J_i.
        \end{align}

        By Lemma \ref{lemma:qs_i2}, there exists $N_0\in \N$ and a distortion function $\eta$ such that if $n+r\geq N_0$, then $f^{\circ (n+r-N_0)}$ and $g^{\circ (n+r-N_0)}$ are $\eta$-quasisymmetric on $\bigcup_{i=2}^{p'}J_i$ and $h(\bigcup_{i=2}^{p'}J_i)$, respectively. Therefore, by \eqref{lemma:qs_k2} and \eqref{quasisymmetry} we have
        $$\diam f^{\circ (n+r-N_0)}(J) \simeq \diam f^{\circ (n+r-N_0)}(J_2).$$
        Since $f^{\circ (n+r-N_0)}(J_2)$ is a complementary arc of $F_{N_0}$, its diameter is comparable to $1$. The fact that $h$ is a homeomorphism conjugating $f$ to $g$ implies that
        $$\diam g^{\circ (n+r-N_0)}(h(J)) \simeq \diam g^{\circ (n+r-N_0)}(h(J_2))\simeq 1.$$
        Since $g^{\circ (n+r-N_0)}$ is $\eta$-quasisymmetric on $J$, $\diam h(J) \simeq \diam h(J_2)$. As $I\supset J$, we have $\diam h(I) \gtrsim \diam h(J_2)$. Finally, by \eqref{lemma:qs_i_i2}, we have $\diam h(J_2) \simeq \diam h(I_2)$. This completes the proof in the case that $n+r\geq N_0$. If $n+r<N_0$, then the desired conclusion follows immediately from the uniform continuity of $h$.       
    \end{proof}

\subsection{Distortion estimates and Proof of Theorem \ref{theorem:extension_generalization}}
Let $f,g$ be as in Theorem \ref{theorem:extension_generalization}. The expansivity  and Lemma \ref{lemma:expansive_conjugate_fg} imply that the map $h\colon \{a_0,\dots,a_r\}\to \{b_0,\dots,b_r\}$ from the statement of Theorem \ref{theorem:extension_generalization} extends to an orientation-pre\-serving homeomorphism $h$ of $\mathbb S^1$ that conjugates $f$ to $g$.

Let $I,J\subset \mathbb S^1$ be adjacent closed arcs each of which has length $t\in (0,1/2)$.  Consider points $a\in F_1$ and $b=h(a)$ arising by  applying Lemma \ref{lemma:newelevator} to the arc $I\cup J$.  We will show that 
\begin{align}\label{distortion:qs}
    \diam h(I) \simeq \diam h(J) \quad \textrm{under condition \ref{HH}}
\end{align}
for the points $a,b$ and that
\begin{align}\label{distortion:david}
    \max \left\{\frac{\diam h(I)}{\diam h(J)}, \frac{\diam h(J)}{\diam h(I)} \right\}\lesssim \log(1/t)\quad \textrm{under condition \ref{HP}}
\end{align}
for $a,b$. The above estimates, when combined with the Beurling--Ahlfors extension theorem \cite{BeurlingAhlfors:extension}, which provides a quasiconformal extension, or with Theorem \ref{theorem:extension_david}, which provides a David extension,  complete the proof of Theorem \ref{theorem:extension_generalization}.

Let $m$ be as in Lemma \ref{lemma:newelevator}, so that $f^{\circ m}|_{I\cup J}$ and $g^{\circ m}|_{h(I\cup J)}$ are $\eta$-quasisymmetric for some uniform distortion function $\eta$. We let $I'=f^{\circ m}(I)$ and $J'=f^{\circ m}(J)$. Since $\diam I=\diam J$, we have $\diam{I'}\simeq \diam{J'}$. We consider two main cases for the proof, depending on which of the two alternatives of Lemma \ref{lemma:newelevator} applies to $I\cup J$.

\subsubsection{Alternative \ref{lemma:newelevator:1}}
Suppose that alternative \ref{lemma:newelevator:1} holds for $I\cup J$. Then there exists $p\in \N$ that is uniformly bounded above, $n\in \N\cup \{0\}$, and consecutive complementary arcs $I_1',\dots,I_p'$ of $F_{s}$, where $s=n+r+1$, such that $a$ is an endpoint of $I_1'$ and
\begin{align}\label{rel_pos}
    I_{i_0}'\subset I'\cup J'\subset \bigcup_{i=1}^p I_i'
\end{align}
for some $i_0\in \{2,\dots,p-1\}$. Without loss of generality, suppose that $\bigcup_{i=1}^p I_i'\subset \arc{[a,z_0]}$ for some $z_0\neq a$, and that $I'$ is closer to $a$ than $J'$ within $\bigcup_{i=1}^p I_i'$. The relative position of $I'$ and $J'$ implies that $J'\cap \bigcup_{i=2}^p I_i'\neq \emptyset$; see Figure \ref{fig:distortion:ai}.

Let $k\in \N\cup \{0\}$ be the smallest integer such that a complementary arc of $F_{s+k}$ not having $a$ as an endpoint intersects $I'$. By the definition of $k$ there exist consecutive complementary arcs $J_1',\dots,J_{p'}'$ of $F_{s+k}$, where $p'\leq \max\{r+1,p\}$, such that $I'\subset \bigcup_{i=1}^{p'}J_i'$ and $I'\cap \bigcup_{i=2}^{p'}J_i'\neq \emptyset$; see Figure \ref{fig:distortion:ai}. Also, let $l_1\in \N\cup \{0,\infty\}$ (resp.\ $l_2\in \N\cup \{0\}$) be the smallest number such that there exists a complementary arc of $F_{s+k+l_1}$ (resp.\ $F_{s+l_2}$) contained in $\arc{[a,z_0]}$ that has $a$ as an endpoint and does not intersect $I'$ (resp.\ $J'$). Observe that $k\leq l_2\leq k+1$. 

\begin{figure}
    \centering
    \begin{tikzpicture}
        \draw (0,0)--(8,0);
        \fill (1,0) circle (2pt) node[anchor=north] {$a$};
        \draw[line width=2pt] (2,0)--(7,0);
        \draw (4,0.07)--(4,-0.07);
        \draw [decorate, decoration = {brace,mirror}] (2,-0.3) --  (3.95,-0.3) node[pos=0.5, anchor=north]{$I'$};
        \draw [decorate, decoration = {brace,mirror}] (4.05,-0.3) --  (7,-0.3) node[pos=0.5, anchor=north]{$J'$};

        \draw [decorate, decoration = {brace}] (1.,0.8) --  (4.5,0.8) node[pos=0.5, anchor=south]{$I_1'$};
        \draw [decorate, decoration = {brace}] (4.55,0.8) --  (6,0.8) node[pos=0.5, anchor=south]{$I_2'$};
        \draw [decorate, decoration = {brace}] (6.05,0.8) --  (8,0.8) node[pos=0.5, anchor=south]{$I_3'$};
        \draw [decorate, decoration = {brace}] (1.,0.2) --  (2.5,0.2) node[pos=0.5, anchor=south]{$J_1'$};
        \draw [decorate, decoration = {brace}] (2.55,0.2) --  (4.5,0.2) node[pos=0.5, anchor=south]{$J_2'$};
    \end{tikzpicture}   
    \caption{Relative positions of arcs in the case of \ref{lemma:newelevator:1}.}
    \label{fig:distortion:ai}
\end{figure}

\medskip
\noindent
\textbf{Case {H}:} $a^+$ is hyperbolic.  By Lemma \ref{lemma:one_sided_estimates}, $\diam J' \simeq \lambda(a^+)^{-s}$ and $\diam I'\simeq \lambda(a^+)^{-s-k}$. Since $\diam I'\simeq \diam J'$, we conclude that $k\lesssim 1$. Therefore,
\begin{align}\label{distortion:ai:hyperbolic}
    \diam I'\simeq \diam J' \simeq \lambda(a^+)^{-s}.
\end{align}
By Definition \ref{definition:hyperbolic}, we have $\diam J_2' \simeq \lambda(a^+)^{-s-k} \simeq \diam I'$.  Therefore, Lemma \ref{lemma:qs_i} is applicable, and 
\begin{align}
    \diam h(J')\gtrsim \diam h(I_2')\quad \textrm{and} \quad \diam h(I') \gtrsim \diam h(J_2'). 
\end{align}
These inequalities imply that Lemma \ref{lemma:one_sided_estimates} is applicable to the images $h(I')$, $h(J')$. 

\medskip
\noindent
\textbf{Case {H}$\to${H}:} $b^+$ is hyperbolic. By Lemma \ref{lemma:one_sided_estimates},
$$ \diam h(I')\simeq \lambda(b^+)^{-s-k}\simeq \lambda(b^+)^{-s}\simeq  \diam h(J').$$
Since $g^{\circ m}|_{h(I\cup J)}$ is quasisymmetric, \eqref{distortion:qs} follows.

\medskip
\noindent
\textbf{Case {H}$\to${P}:} $b^+$ is parabolic. By \eqref{lemma:newelevator:diam} in Lemma \ref{lemma:newelevator}, we have $$\diam I' \gtrsim (\diam I)^{\gamma} \simeq t^\gamma.$$
Combined with \eqref{distortion:ai:hyperbolic} and the assumption that $t\in (0,1/2)$, this gives $\log(1/t)\gtrsim s\simeq n+1$.  By Lemma \ref{lemma:one_sided_estimates}, combined with the facts that $k\lesssim 1$ and $k\leq l_2\leq k+1$, we have
\begin{align*}
    \diam h(I') &\simeq (s+k)^{-\beta-1}\min\{ l_1+1,s+k\} \simeq s^{-\beta -1} \min\{ l_1+1,s\}  \quad \textrm{and}\\
    \diam h(J') &\simeq s^{-\beta-1} \min\{l_2+1,s\} \simeq s^{-\beta-1} \min\{1,s\};
\end{align*}
here $\beta=1/N(b^+)$. Thus, 
$$ \max \left\{\frac{\diam h(I')}{\diam h(J')}, \frac{\diam h(J')}{\diam h(I')} \right\}\lesssim s \simeq n+1.$$
By the final part of \ref{lemma:newelevator:1} and assuming condition \ref{condition:qs_strong} for the map $g$, we see that $g^{\circ m}|_{h(I\cup J)}$ is $\eta_n$-quasisymmetric.  By \eqref{quasisymmetry}, we conclude that, for $K_n=1+M(1+\log (n+1))^{-1}$, we have
$$ \max \left\{\frac{\diam h(I)}{\diam h(J)}, \frac{\diam h(J)}{\diam h(I)} \right\}\lesssim (n+1)^{K_n} \simeq n+1 \lesssim \log(1/t).$$
Thus, \eqref{distortion:david} is true. 

\medskip
\noindent
\textbf{Case P$\to$P:} $a^+$ and $b^+$ are parabolic. Let $\alpha=1/N(a^+)$ and $\beta=1/N(b^+)$. By Lemma \ref{lemma:one_sided_estimates}, since $k\leq l_2\leq k+1$, we have
\begin{align*}
    s^{-\alpha-1}\min\{k+1,s\} \simeq \diam J' \simeq \diam I' \simeq (s+k)^{-\alpha-1}\min\{l_1+1,s+k\}.
\end{align*}
If $k+1\geq s$, we obtain
$$s^{-\alpha} \simeq (k+1)^{-\alpha-1}\min\{l_1+1,s+k\}\lesssim (k+1)^{-\alpha-1}(s+k) \simeq (k+1)^{-\alpha}.$$
We conclude that $k+1\lesssim s$. In any case, $0\leq k\lesssim s$. 

By Lemma \ref{lemma:one_sided_estimates} and Definition \ref{definition:parabolic},
$$ \diam I' \simeq (s+k)^{-\alpha-1} \min\{l_1+1,s+k\} \gtrsim (s+k)^{-\alpha-1} \simeq \diam J_2'.$$
Therefore, Lemma \ref{lemma:qs_i} can be applied to $I'$, so $\diam h(I')\gtrsim \diam h(J_2')$. The relative position of $I',J'$ and \eqref{rel_pos} imply that Lemma \ref{lemma:qs_i} applies to $J'$ as well, so  $\diam h(J')\gtrsim \diam h(I_2')$. We now apply Lemma \ref{lemma:one_sided_estimates} to $h(I')$ and $h(J')$ and obtain
\begin{align*} \frac{\diam h(J')}{\diam J'} &\simeq \frac{s^{-\beta-1} \min\{l_2+1,s\}}{s^{-\alpha-1} \min\{l_2+1,s\}} \simeq s^{\alpha- \beta}, \quad \textrm{and}\\
    \frac{\diam h(I')}{\diam I'}&\simeq  \frac{(s+k)^{-\beta-1} \min\{l_1+1,s+k\}}{(s+k)^{-\alpha-1}\min\{l_1+1,s+k\}} \simeq (s+k)^{\alpha-\beta} \simeq s^{\alpha-\beta}.
\end{align*}
Altogether, since $\diam I'\simeq \diam J'$, we have $\diam h(I')\simeq \diam h(J')$. The fact that $g^{\circ m}|_{h(I\cup J)}$ is quasisymmetric implies \eqref{distortion:qs}.

\subsubsection{Alternative \ref{lemma:newelevator:2}}
Suppose that alternative \ref{lemma:newelevator:2} holds for $I\cup J$. Without loss of generality assume that $J'\subset \arc{[a,z_0]}$ for some $z_0\neq a$ and $I'=I_1'\cup I_2'$, where $I_2'\subset \arc{[a,z_0]}$ and $I_1'\subset \arc{[z_0,a]}$. According to \ref{lemma:newelevator:2}, $I_2'\cup J'$ satisfies the conclusions of \ref{lemma:newelevator:1}. In particular, there exist $p\in \N$, uniformly bounded above, $n \in \N\cup \{0\}$, and consecutive complementary arcs $J_1',\dots, J_p'$ of $F_s$, where $s=n+r+1$, such that $a$ is an endpoint of $J_1'$ and 
\begin{align*}
    J_{2}'\subset I_2'\cup J' \subset \bigcup_{i=1}^p J_i'.
\end{align*}
See Figure \ref{fig:distortion:aii} for an illustration. For $i=1,2$, denote by $w_i$ the word associated to $I_i'$ and set $s_i=|w_i|+1$.  Let $l\in \N\cup \{0\}$ be the smallest integer such that there exists a complementary arc of $F_{s+l}$ contained in $\arc{[a,z_0]}$ that has $a$ as an endpoint and does not intersect $J'$. Observe that $s_2=s+l$.

\begin{figure}
    \centering
    \begin{tikzpicture}
        \draw (-1.5,0)--(8,0);
        \fill (1,0) circle (2pt) node[anchor=north] {$a$};
        \draw[line width=2pt] (-0.5,0)--(7,0);
        \draw (4,0.07)--(4,-0.07);
        \draw [decorate, decoration = {brace,mirror}] (1.05,-0.3) --  (3.95,-0.3) node[pos=0.5, anchor=north]{$I_2'$};
        \draw [decorate, decoration = {brace,mirror}] (-0.5,-0.3) --  (0.95,-0.3) node[pos=0.5, anchor=north]{$I_1'$};
        \draw [decorate, decoration = {brace,mirror}] (4.05,-0.3) --  (7,-0.3) node[pos=0.5, anchor=north]{$J'$};

        \draw [decorate, decoration = {brace}] (1.,0.3) --  (4.5,0.3) node[pos=0.5, anchor=south]{$J_1'$};
        \draw [decorate, decoration = {brace}] (4.55,0.3) --  (6,0.3) node[pos=0.5, anchor=south]{$J_2'$};
        \draw [decorate, decoration = {brace}] (6.05,0.3) --  (8,0.3) node[pos=0.5, anchor=south]{$J_3'$};
    \end{tikzpicture}   
    \caption{Relative positions of arcs in the case of \ref{lemma:newelevator:2}.}
    \label{fig:distortion:aii}
\end{figure}
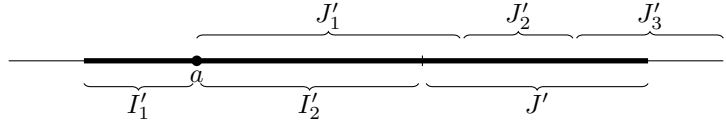

\medskip
\noindent 
\textbf{Case \ref{HH}:} There exists $\mu>0$ such that if $a^{\pm}$ is hyperbolic, then $b^{\pm}$ is also hyperbolic with $\lambda(a^\pm)^{\mu} =\lambda(b^{\pm})$ and if $a^{\pm}$ is parabolic, then $b^{\pm}$ is also parabolic with $\mu^{-1}N(a^{\pm})=N(b^{\pm})$. Our goal is to show that in all of these cases we have
\begin{align}\label{distortion:aii:main_claim}
    \diam h(J')\simeq (\diam J')^{\mu}\quad \textrm{and} \quad \diam h(I_i') \simeq (\diam I_i')^{\mu}
\end{align}
for $i=1,2$. These conditions imply that $\diam h(I')\simeq \diam h(J')$, so \eqref{distortion:qs} follows since the map $g^{\circ m}|_{h(I\cup J)}$ is a quasisymmetry.

\medskip
\noindent
\textbf{Case H$\to$H:} $a^+$ and $b^+$ are hyperbolic. By Definition \ref{definition:hyperbolic}, we have
\begin{align*}
    \diam I_2' \simeq \lambda(a^+)^{-s_2} \,\,\, \textrm{and}\,\,\,  \diam h(I_2') \simeq \lambda(b^+)^{-s_2}
\end{align*}
so $\diam h(I_2')\simeq (\diam I_2')^{\mu}$. Moreover, by Lemma \ref{lemma:one_sided_estimates}, we have
$$\diam J' \simeq \lambda(a^+)^{-s} \simeq \diam J_2'.$$
Thus, we apply Lemma \ref{lemma:qs_i} to $J'$ and obtain $\diam h(J') \gtrsim \diam h(J_2')$.  By Lemma \ref{lemma:one_sided_estimates} we now have
\begin{align*}
    \diam h(J') &\simeq \lambda(b^+)^{-s} \simeq (\diam J')^{\mu}.
\end{align*}
If $a^-$ and $b^-$ are hyperbolic, then with the same argument we have $\diam h(I_1') \simeq \diam  (I_1')^{\mu}$.

\medskip
\noindent
\textbf{Case P$\to$P:} $a^+$ and $b^+$ are parabolic. Let $\alpha^+=1/N(a^+)$ and $\beta^+=1/N(b^+)$, so $\mu \alpha^+=\beta^+$. By Definition \ref{definition:parabolic},
\begin{align}\label{distortion:aii:par_par}
    \diam I_2' \simeq s_2^{-\alpha^+} \quad \textrm{and}\quad \diam h(I_2') \simeq s_2^{-\beta^+}.
\end{align}
Hence, $\diam h(I_2')\simeq (\diam I_2')^{\mu}$. By Lemma \ref{lemma:one_sided_estimates}, we have
\begin{align*}
    \diam J'\simeq s^{-\alpha^+-1}\min\{l+1,s\}\quad \textrm{and}\quad \diam h(J') \simeq s^{-\beta^+-1} \min\{l+1,s\}.
\end{align*}
Given that $s_2=s+l$, by \eqref{distortion:aii:par_par} we obtain 
$$(s+l)^{-\alpha^+} \simeq \diam I_2' \lesssim \diam I' \simeq \diam J'\simeq   s^{-\alpha^+-1}\min\{l+1,s\},$$
which implies that $l+1\gtrsim s$. Therefore, 
$$\diam J' \simeq s^{-\alpha^+} \quad \textrm{and}\quad \diam h(J') \simeq s^{-\beta^+},$$
which give the desired $\diam h(J')\simeq (\diam J')^{\mu}$. If $a^-$ and $a^-$ are parabolic then with the same argument as in $I_2'$ we obtain $\diam h(I_1')\simeq (\diam I_1')^{\mu}$. We have completed the verification of \eqref{distortion:aii:main_claim}.

\medskip
\noindent 
\textbf{Case \ref{HP}:} $a$ is symmetrically hyperbolic and $b$ is symmetrically parabolic. We set $\lambda=\lambda(a)$ and $\beta=1/N(b)$. We can apply Lemma \ref{lemma:one_sided_estimates} and obtain
\begin{align*}
\diam{J'}\simeq \lambda^{-s} \quad &\textrm{and}\quad  \diam{h(J')} \simeq s^{-\beta-1} \min \{l+1,s\}, \\
\diam{I_2'} \simeq \lambda^{-s_2} \quad &\textrm{and}\quad \diam{h(I_2')} \simeq s_2^{-\beta}, \quad \textrm{and}\\
\diam{I_1'} \simeq \lambda^{-s_1} \quad &\textrm{and}\quad \diam{h(I_1')} \simeq s_1^{-\beta}.
\end{align*}
By \eqref{lemma:newelevator:diam} in Lemma \ref{lemma:newelevator}, we have $$\diam J' \gtrsim (\diam J)^{\gamma} \simeq t^\gamma.$$
Combined with the above and the assumption that $t\in (0,1/2)$, this gives $\log(1/t)\gtrsim s\simeq  n+1$. 

If $\diam I_2'\leq \diam I_1'$, then $\diam I_1'\simeq \diam J'$. Hence, $s_2\gtrsim s_1$ and $s_1\simeq s$. These conditions give $\diam h(I') \simeq s_1^{-\beta}\simeq s^{-\beta}$. If, instead, $\diam I_1'\leq \diam I_2'$, the same analysis gives $\diam h(I') \simeq s_2^{-\beta}\simeq s^{-\beta}$. In both cases, we have
\begin{align}\label{distortion:hp}
    \diam h(I') \simeq  \diam h(J_1') \simeq s^{-\beta}.
\end{align}
We conclude that
\begin{align*}
    \frac{\diam h(J')}{\diam h(J_1')} \simeq s^{-1}\min\{l+1,s\} \,\,\, \textrm{and}\,\,\, \max \left\{\frac{\diam h(J_1')}{\diam h(J')}, \frac{\diam h(J')}{\diam h(J_1')} \right\}\lesssim s.
\end{align*}
Under condition \ref{condition:qs_strong}, the map $g^{\circ m}|_{h(J_1'\cup J')}$ is $\eta_n$-quasisymmetric, so by \eqref{quasisymmetry}
$$ \max \left\{\frac{\diam h(J_1)}{\diam h(J)}, \frac{\diam h(J)}{\diam h(J_1)} \right\}\lesssim s^{K_n}\simeq  (n+1)^{K_n}\simeq n+1 \lesssim \log(1/t).$$
Finally, $g^{\circ m}|_{h(I\cup J)}$ is quasisymmetric, so \eqref{distortion:hp} gives $\diam h(J_1)\simeq \diam h(I)$. Thus,
$$ \max \left\{\frac{\diam h(I)}{\diam h(J)}, \frac{\diam h(J)}{\diam h(I)} \right\}\lesssim \log(1/t),$$
as desired in \eqref{distortion:david}. This completes the proof.

\section{Classification of piecewise quasiconformal circle maps}

\begin{theorem}\label{theorem:classification}
    Let $f\colon \mathbb S^1\to \mathbb S^1$ be an expansive covering map with a Markov partition $\mathcal P(f;\{a_0,\dots,a_r\})$ such that
    \begin{enumerate}[label=\normalfont(\roman*)]
        \item conditions \ref{condition:hp}, \ref{condition:uv}, and \ref{condition:qs} are satisfied,
        \item\label{class:ii} the restriction of $f$ to the arc $A_k=\arc{[a_k,a_{k+1}]}$ is injective and at most one of the endpoints of $A_k$ is periodic for each $k\in \{0,\dots,r\}$, and 
        \item each point of $\{a_0,\dots,a_r\}$ is either symmetrically hyperbolic or symmetrically parabolic. 
    \end{enumerate}
     Then there exists an expansive covering map $g\colon \mathbb S^1\to \mathbb S^1$ with a Markov partition $\mathcal P(g;\{b_0,\dots,b_r\})$ and an orientation-preserving homeomorphism $h\colon \mathbb S^1\to \mathbb S^1$ that conjugates $f$ to $g$ with $h(a_k)=b_k$ for each $k\in \{0,\dots,r\}$ and such that
     \begin{enumerate}[label=\normalfont(\Roman*)]
        \item conditions \ref{condition:hp}, \ref{condition:uv}, and \ref{condition:qs} are satisfied by $\mathcal P(g;\{b_0,\dots,b_r\})$,
        \item\label{class:II} the restriction of $g$ to the arc $B_k=\arc{[b_{k},b_{k+1}]}$ is the restriction of a M\"obius or anti-M\"obius transformation $M_k$ (depending on the orientation of $f$) of the unit disk for each $k\in \{0,\dots,r\}$, and
        \item\label{class:III} for each $a\in \{a_0,\dots,a_r\}$ and for $b=h(a)$, we can prescribe whether $b$ is symmetrically hyperbolic or symmetrically parabolic.  
    \end{enumerate}
\end{theorem}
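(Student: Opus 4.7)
The plan is to construct the target map $g$ directly as a piecewise (anti-)M\"obius covering of $\mathbb{S}^1$ with the same Markov combinatorics as $f$, and then to invoke Lemma \ref{lemma:expansive_conjugate_fg} to produce the conjugating homeomorphism $h$. First I would extract from $\mathcal P(f;\{a_0,\dots,a_r\})$ the transition matrix $B=(b_{kj})$ recording which arcs $A_j$ appear inside $f(A_k)$, and decompose $\{a_0,\dots,a_r\}$ into periodic orbits and preperiodic points falling onto them. I would then choose points $b_0,\dots,b_r\in \mathbb{S}^1$ in the same cyclic order as the $a_k$, set $B_k = \arc{[b_k, b_{k+1}]}$, and aim to realize $g|_{B_k}$ as the restriction of a (anti-)M\"obius self-map $M_k$ of $\D$ (with the orientation matching $f$) sending $B_k$ homeomorphically onto $\bigcup_{j:\, b_{kj}=1}B_j$.

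Next I would select the free parameters of the $M_k$. Once the endpoint images are fixed, the space of admissible $M_k$ is one real-parameter; I would parametrize it by the positive one-sided derivative $\tau_k = M_k'(b_k)$. The prescribed symmetric hyperbolic or parabolic type at a periodic point $b_{k_0}$ of period $q$ translates into the requirement that the two one-sided first-return derivatives at $b_{k_0}$ (each a product of certain $\tau_l$ along the orbit) coincide and equal $1$ (parabolic case) or exceed $1$ (hyperbolic, an open condition). Crucially, condition \ref{class:ii} implies that no arc $B_k$ has both endpoints periodic, so each arc is attached to at most one periodic orbit, and the constraints arising from distinct periodic orbits involve disjoint sets of parameters. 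For a single orbit of period $q$ there are $2q$ relevant $\tau_l$ and at most two real equations to satisfy, easily solved by tuning one or two parameters and leaving the rest free; the parameters attached to arcs with no periodic endpoint can be chosen arbitrarily.

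The verification of the three conditions for $g$ is then essentially automatic. For \ref{condition:uv} I would take $U_k$ to be a small open neighborhood of $\inter B_k$ inside $\D$ on which $M_k$ is already defined, arranging pairwise disjointness and the required inclusions $U_j\subset M_k(U_k)$ when $b_{kj}=1$; this is possible because each $M_k$ is a global homeomorphism of $\widehat \C$. Condition \ref{condition:qs} holds with $K=1$, since compositions of (anti-)M\"obius maps are conformal. For \ref{condition:hp}, at each periodic point the first-return map is an analytic boundary-fixing (anti-)M\"obius map of the prescribed kind, so the same local analytic argument that underlies Remark \ref{remark:hyp_par_analytic} yields the symmetric hyperbolic or parabolic behavior in the sense of Definitions \ref{definition:hyperbolic} and \ref{definition:parabolic}; at preperiodic points, pulling back through the local (anti-)conformal diffeomorphism $g^{\circ m}$ transfers the diameter estimates with bounded distortion. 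Expansivity of $g$ then follows from property \ref{expansive:diameters}, as complementary arcs contract geometrically near hyperbolic orbits and polynomially near parabolic orbits, and Lemma \ref{lemma:expansive_conjugate_fg} furnishes the homeomorphism $h$ with $h(a_k)=b_k$ conjugating $f$ to $g$.

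The main anticipated obstacle is the derivative-matching analysis in the second step: one must check that, for each periodic orbit, the derivative equations can indeed be solved by tuning a single positive parameter $\tau$, which reduces to showing that the resulting first-return derivative is a surjective positive real-analytic function of $\tau$ on $(0,\infty)$. The decoupling guaranteed by condition \ref{class:ii} is the essential structural input that prevents different periodic orbits from imposing incompatible constraints on a shared parameter.
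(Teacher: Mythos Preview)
Your proposal is correct and follows essentially the same route as the paper. The paper simplifies your second step by directly setting $M_k'(b_k)=2$ (hyperbolic) or $M_k'(b_k)=1$ (parabolic) at the unique periodic endpoint $b_k$ of each arc (condition \ref{class:ii} ensures there is at most one, so this pins down $M_k$), which makes the two one-sided first-return multipliers equal by construction and avoids any equation-solving; it then verifies \ref{condition:uv} explicitly using disks orthogonal to $\mathbb S^1$ and deduces expansivity of $g$ from Theorem~\ref{theorem:expansive_criterion} rather than from \ref{expansive:diameters} directly.
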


Recall that the indices are taken in modulo $r+1$; e.g.\ $a_{r+1}=a_0$ and $a_{-1}=a_r$.  

\begin{remark}\label{remark:classification}
If in \ref{class:III} we prescribe that $b=h(a)$ is symmetrically hyperbolic (resp.\ parabolic) if and only if $a$ is symmetrically hyperbolic (resp.\ parabolic) for each $a\in \{a_0,\dots,a_r\}$, then by Theorem \ref{theorem:extension_generalization} the conjugacy $h$ is quasisymmetric. Hence, Theorem \ref{theorem:classification} provides a classification of piecewise quasiconformal circle maps up to quasisymmetric conjugacy. 
\end{remark}

Before proving the theorem, we establish a general criterion for the expansivity of a map.

\begin{theorem}\label{theorem:expansive_criterion}
    Let $f\colon \mathbb S^1\to \mathbb S^1$ be a covering map and $\mathcal P(f;\{a_0,\dots,a_r\})$ be a primitive Markov partition satisfying conditions \ref{condition:uv} and \ref{condition:qs}. Suppose that $f$ is expansive at each periodic point of $\{a_0,\dots,a_r\}$. Then $f$ is expansive. 
\end{theorem}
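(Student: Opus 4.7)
The proof is by contradiction. Assume $f$ is not expansive. By property \ref{expansive:diameters}, there exist $\varepsilon_0>0$ and admissible words $w_n$ with $|w_n|=n\to\infty$ and $\diam A_{w_n}\geq \varepsilon_0$. Since the collection of admissible words whose Markov arcs have diameter at least $\varepsilon_0$ is closed under taking prefixes (as $A_{w}\supset A_{wj}$), a standard K\"onig's lemma argument applied to the tree of admissible words yields an infinite admissible sequence $\mathbf{j}^*=(j_1^*,j_2^*,\dots)$ with $\diam A_{(j_1^*,\dots,j_k^*)}\geq \varepsilon_0$ for every $k\in\N$.

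The plan is to apply the quasiconformal elevator to each arc $I_k=A_{(j_1^*,\dots,j_k^*)}$ and transfer the non-shrinking behavior to a neighborhood of a periodic point of $F_1=\{a_0,\dots,a_r\}$, where the hypothesis of expansivity at periodic points yields a contradiction. By primitivity (Lemma \ref{lemma:primitive}), each $I_k$ contains complementary arcs of $F_{k+r+1}$, so alternative \ref{lemma:newelevator:1} of Lemma \ref{lemma:newelevator} applies: writing the canonical splitting $(j_1^*,\dots,j_k^*)=(v_k,u_k)$ with $m_k=|v_k|$, the map $f^{\circ m_k}|_{I_k}$ is $\eta$-quasisymmetric onto $A_{u_k}$, which has an endpoint $a_k\in F_1$ and satisfies $\diam A_{u_k}\gtrsim \varepsilon_0^{\gamma}$ by the diameter estimate at the end of Lemma \ref{lemma:newelevator}. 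Passing to a subsequence, we may assume that $a_k=a$ is a fixed element of $F_1$ and that $|u_k|$ is either unbounded or bounded.

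In the principal case $|u_k|\to\infty$, we obtain arbitrarily long admissible words whose Markov arcs share the endpoint $a$ and have diameter bounded below by some fixed $\varepsilon_1>0$. Since $F_1$ is finite and $f$-invariant, the forward orbit of $a$ eventually enters a periodic cycle; condition \ref{condition:qs} combined with Theorem \ref{theorem:qcqs} furnishes uniform distortion control on the bounded number of additional iterates needed to reach this cycle, so we may assume from the start that $a$ is periodic, of some period $q$. A short combinatorial analysis shows that on each side of $a$ there is a unique periodic infinite admissible word whose prefix arcs persistently have $a$ as an endpoint, and that each sufficiently long $u_k$ must be a prefix of one of these two words. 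Applying Lemma \ref{lemma:expansive_point} to $f^{\circ q}$ at $a$ then produces an inverse branch of $f^{\circ q}$ whose iterates converge uniformly to the constant $a$ on a fixed neighborhood, which forces $\diam A_{u_k}\to 0$ as $|u_k|\to\infty$ and contradicts the lower bound.

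The remaining possibility is that $|u_k|$ stays bounded, equivalently $m_k\to\infty$ with the arcs $A_{u_k}$ drawn from a finite family, which corresponds to both endpoints of $I^*=\bigcap_k I_k$ having infinite level. This case is the main technical obstacle: to handle it, one must use recurrence in the space of admissible sequences together with the uniform quasiconformality from condition \ref{condition:qs} to extract a combinatorial pattern that reduces the situation back to the principal case, after which the same contradiction with Lemma \ref{lemma:expansive_point} applies.
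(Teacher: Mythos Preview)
Your outline tracks the paper's proof in its broad strokes---reduce to a nested family of large Markov arcs, apply the quasiconformal elevator, and land near a periodic point of the partition---but it leaves two genuine gaps.

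The first and most serious is the case $|u_k|$ bounded, which you flag as ``the main technical obstacle'' but do not resolve. The paper does not treat this as a separate case at all: it shows directly that $|u(n)|\to\infty$ must hold, and the mechanism is Lemma~\ref{lemma:qs_i2}. One decomposes $J_n=A_{w(k_n)}=I_1(n)\cup\dots\cup I_{p_n}(n)$ into its consecutive complementary sub-arcs of level $|w(k_n)|+r+1$, with $I_1(n)$ the arc whose image under $f^{\circ m(n)}$ touches the point $a(n)\in F_1$. By Lemma~\ref{lemma:qs_i2} the images of $I_2(n),\dots,I_{p_n}(n)$ have mutually comparable diameters, hence so do $I_2(n),\dots,I_{p_n}(n)$ themselves by the $\eta$-quasisymmetry of the elevator. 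Since the nested tail $\{J_l\}_{l>n}$ eventually lives in a single sub-arc $I(n)$, one has $\diam I(n)\to\delta$ while $\diam J_n\to\delta$; if $I(n)$ were one of $I_2,\dots,I_{p_n}$ then at least two of these comparable arcs would each have diameter near $\delta$, forcing $\diam J_n$ strictly above $\delta$. So $I(n)=I_1(n)$, whence $\diam(J_n\setminus I_1(n))\to 0$, and by quasisymmetry $\diam(A_{u(n)}\setminus I_1'(n))\to 0$, which forces $|u(n)|\to\infty$. Your appeal to ``recurrence in the space of admissible sequences'' does not substitute for this argument.

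The second gap is in your principal case. You invoke Lemma~\ref{lemma:expansive_point} to conclude that inverse branches of $f^{\circ q}$ converge to $a$ and hence $\diam A_{u_k}\to 0$. But Lemma~\ref{lemma:expansive_point} only produces convergence on some arc $J$ with $a\in\inter J$, and there is no a priori reason the arcs $A_{u_k}$ (which have diameter $\geq\varepsilon_1$) ever fit inside $J$. The paper closes this by working in the plane: conditions \ref{condition:uv} and \ref{condition:qs} yield uniformly $K$-quasiconformal inverse branches $G_n\colon W_1\to W_{n+1}$ on a planar region $W_1\supset\inter C_1$; a normal-families argument for quasiconformal maps extracts a subsequential limit $G$, and Lemma~\ref{lemma:expansive_point} is used only to identify $G\equiv a$ on a small sub-arc, hence everywhere. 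The contradiction then comes from evaluating $G_n$ at the non-$a$ endpoint $b_{k_0}$, which lies in $W_1$ by primitivity, giving $b_n\to a$ against $b_n\to b\neq a$. This planar normal-families step is essential and is missing from your sketch.
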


\begin{proof}
By replacing $f$ with $f\circ f$ and changing appropriately the Markov partition, we may assume that $f$ is orientation-preserving; see \ref{expansive:iterate_iff}.

Suppose that $f$ is not expansive. By \ref{expansive:diameters} there exists $\delta>0$, $j_i\in \{0,\dots,r\}$, $i\in \N$, and $w(n)=(j_1,\dots,j_n)$ such that $\diam A_{w(n)}\to \delta$ as $n\to\infty$. We will first reduce to the case that there exists a point $a\in F_1$ and a nested sequence of complementary arcs of $F_n$ that have $a$ as an endpoint and have large diameter. 

Since the Markov partition is primitive, by Lemma \ref{lemma:primitive}, there exists a subsequence $\{w(k_n)\}_{n\in \N}$ of $\{w(n)\}_{n\in \N}$ such that $J_n=A_{w(k_n)}$ is a complementary arc of $F_{|w(k_n)|}$ but not of $F_{|w(k_n)|+1}$, $n\in \N$. Thus, $w(k_n)$ is the word associated to $J_n$. By Lemma \ref{lemma:newelevator} \ref{lemma:newelevator:1} applied to $J_n$, if we consider the canonical splitting $w(k_n)=(v(n),u(n))$ and $m(n)=|v(n)|$, then $A_{u(n)}$ has a point $a(n)$ of $F_1$ as an endpoint and $f^{\circ m(n)}|_{J_n}\colon J_n\to A_{u(n)}$ is $\eta$-quasisymmetric for some uniform distortion function $\eta$. Moreover, there exist consecutive complementary arcs $I_1(n),\dots,I_{p_n}(n)$, $3\leq p_n\leq M$, of $F_{|w(k_n)|+r+1}$ such that $J_n=\bigcup_{i=1}^{p_n} I_i(n)$.  The notation is such that $I_1(n)$ is the arc whose image $I_1'(n)$ under $f^{\circ m(n)}$ has $a(n)$ as an endpoint. We also denote by $I(n)$ the arc among $I_1(n),\dots,I_{p_n}(n)$ that contains all but finitely many arcs $J_l$, $l\in \N$, and hence $\diam I(n)\to \delta$ as $n\to\infty$. In addition, by \eqref{lemma:newelevator:diam} in Lemma \ref{lemma:newelevator}, there exists $\delta'>0$ such that  $\diam A_{u(n)} \geq \delta'$ for all $n\in \N$.

By Lemma \ref{lemma:qs_i2}, the images of $I_2(n),\dots,I_{p_n}(n)$ under $f^{\circ m(n)}$ have comparable diameters. Since $f^{\circ m(n)}|_{J_n}$ is $\eta$-quasisymmetric, by \eqref{quasisymmetry} we conclude that $I_2(n),\dots,I_{p_n}(n)$ have comparable diameters. Given that $\diam I(n)\to \delta$ and $\diam J_n\to\delta$ as $n\to\infty$, we conclude that $I_1(n)=I(n)$ for all sufficiently large $n\in \N$. Moreover, $\diam (J_n\setminus I_1(n))\to 0$ as $n\to\infty$. Hence, by \eqref{quasisymmetry}, we have $\diam ( A_{u(n)}\setminus I_1'(n)) \to 0$. This shows, that $|u(n)|\to\infty$ as $n\to\infty$.

Thus, there exists  $a\in F_1$ and for each $n\in \N$ a complementary arc $B_n$ of $F_n$ having $a$ as an endpoint such that $B_{n+1}\subset B_n$ and $\diam B_n \geq \delta'$ for each $n\in \N$. By applying finitely many iterates of $f$, and using the uniform continuity of $(f|_{B_n})^{-1}$, we may assume that $a$ is a periodic point with period $p$. Let $C_1=B_1$ and $C_{n}=B_{np+1}$, so that $f^{\circ p}$ maps $C_{n+1}$ onto $C_{n}$.

By \ref{condition:uv} and \ref{condition:qs}, for each $n\in \N$ there exists a region $W_{n}$ containing $\inter C_n$ such that $W_{n+1}\subset W_n$ and $f^{\circ p}|_{W_{n+1}}\colon W_{n+1}\to W_n$ is $K$-quasiconformal for a uniform constant $K\geq 1$. Consider the inverse map $g_n\colon W_{n}\to W_{n+1}$. We also define $G_n=g_n\circ \dots\circ g_1\colon W_{1}\to W_{n+1}$, which is $K$-quasiconformal by \ref{condition:qs}. Since $W_{n}\subset W_1$ for each $n\in \N$, $G_n$ omits all values in the complement of $W_1$. By \cite{LehtoVirtanen:quasiconformal}*{Theorem II.5.2}, $G_n$ converges locally uniformly, after passing to a subsequence, to a constant map or to a $K$-quasiconformal embedding $G\colon W_1\to \C$.  By Lemma \ref{lemma:expansive_point}, we conclude that $G$ is the constant map $z\mapsto a$.

For $n\in \N$, let $b_n$ be the endpoint of $C_n$ that is different from $a$ and observe that $f^{\circ p}(b_{n+1})=b_n$, $n\in \N$. Since $\diam C_n\geq \delta'$ for each $n\in \N$, $b_n$ converges to a point $b\neq a$. Since the Markov partition is primitive, by Lemma \ref{lemma:primitive} we conclude that $b_{k_0}\in W_1$ for some $k_0\in \N$. Note that as $n\to\infty$, the points $G_n(b_{k_0})= b_{k_0+n}$ converge to $G(b_{k_0})=a$. Thus, $a=b$, a contradiction. 
\end{proof}

\begin{proof}[Proof of Theorem \ref{theorem:classification}]
    Suppose that $a_k$ is a periodic point of $f$. By assumption \ref{class:ii}, $f|_{A_k}$ is injective and the points $a_{k-1},a_{k+1}$ are not periodic. If $a_{k-1}=a_{k+1}$, then we necessarily have $f(a_{k+1})=a_k$, which contradicts injectivity. Thus, $a_{k-1}\neq a_{k+1}$. This implies that there are at least 3 points in the Markov partition $\{a_0,\dots,a_r\}$. 
    
    Consider points $b_0,\dots,b_r\in \mathbb S^1$ with the same cyclic order as the points $a_0,\dots,a_r$. We require that the length of $B_k$ is less than $\pi$ for $i\in \{0,\dots,r\}$; this is possible since there are at least 3 arcs. Define $g(b_i)=b_j$ whenever $f(a_i)=a_j$, $i,j\in \{0,\dots,r\}$. For each $k\in \{0,\dots,r\}$ we will define  $g|_{B_k}$ to be an appropriate (anti-)M\"obius transformation such that $B_i\subset g(B_k)$ if and only  $A_i\subset f(A_k)$. 

    For simplicity, suppose that $f$ is orientation-preserving. Let $k\in \{0,\dots,r\}$ and suppose that $f$ maps the arc $A_k$ onto an arc $\arc{[a_i,a_j]}$. By assumption, $f|_{A_k}$ is injective, so $a_i\neq a_j$. We consider two cases.

    \medskip
    \noindent   
    \textbf{Case 1:} The endpoints of $A_k$ are not periodic. We define $g|_{B_k}$ to be the restriction of a M\"obius transformation $M_k$ of the unit disk that maps the arc $B_k$ onto $\arc{[b_i,b_j]}$. 

    \medskip
    \noindent   
    \textbf{Case 2:} One of the endpoints of $A_k$, say $a_k$, periodic. By assumption, $a_{k+1}$ is not periodic.  If $b_k$ is prescribed in \ref{class:III} to be symmetrically hyperbolic, we define $g|_{B_k}$ to be the restriction of a M\"obius transformation  $M_k$ of the unit disk that maps $B_k$ to $\arc{[b_i,b_j]}$ and such that $(g|_{B_k})'(b_k)=2>1$. If, instead $b_k$ is prescribed to be symmetrically parabolic, then we require that $g|_{B_k}$ is a M\"obius transformation $M_k$ of the unit disk that maps $B_k$ to $\arc{[b_i,b_j]}$ such that $(g|_{B_k})'(b_k)=1$ and $B_k$ defines a repelling direction. 

    \medskip

    By construction, $g$ is a covering map that is expansive at each periodic point of $\{b_0,\dots,b_r\}$. Consider the Markov partition $\mathcal P(g;\{b_0,\dots,b_r\})$. Since the corresponding Markov partition for $f$ is primitive, we conclude that the same is true for $g$. By Theorem \ref{theorem:expansive_criterion}, we conclude that $g$ is expansive on $\mathbb S^1$, once we verify conditions \ref{condition:uv} and \ref{condition:qs}.

    Let $D\subset \C$ be an open disk that contains for each $k\in \{0,\dots,r\}$ the planar disk that is orthogonal to the unit circle at the points $b_k$ and $b_{k+1}$; here we use the assumption that the length of $B_k$ is less than $\pi$. For $k\in \{0,\dots, r\}$, let $V_k\subset \C$ be the intersection of $D$ with a disk in the sphere $\widehat\C$ that is orthogonal to the unit circle at the points $M_k(b_k)$ and $M_k(b_{k+1})$, and such that $M_k(B_k) \subset \overline{V_k}$. Then define $U_k=M_k^{-1}(V_k)$, which is contained in the planar disk that is orthogonal to the unit circle at $b_k$ and $b_{k+1}$. Observe that $U_k\subset D$ by the choice of $D$. Moreover, if $B_i\subset g(B_k)$ then, by construction, $U_i$ is contained in $V_k$, so \ref{condition:uv} holds. Condition \ref{condition:qs} is automatic. 

    Finally, we verify condition \ref{condition:hp} and show that each point of $\{b_0,\dots,b_r\}$ is symmetrically hyperbolic or symmetrically parabolic. Since $g\colon \mathbb S^1\to \mathbb S^1$ is bi-Lipschitz, it suffices to consider periodic points. Let $b_k$ be a periodic point with period $p$. Suppose that $b_k$ is prescribed in \ref{class:III} to be symmetrically hyperbolic. Then there exists a neighborhood $W$ of $b_k$ such that $g^{\circ p}|_{B_k\cap W}$ and $g^{\circ p}|_{B_{k-1}\cap W}$ are restrictions of two hyperbolic M\"obius transformations with a repelling point at $b_k$ and with the same multiplier, equal to $2^p$. We now use \cite{LyubichMerenkovMukherjeeNtalampekos:David}*{Lemma 4.17}, which implies that $b_k$ is indeed symmetrically hyperbolic.   
    
    Suppose that $b_k$ is prescribed to be symmetrically parabolic. By construction, there exists a neighborhood $W$ of $b_k$ such that $g^{\circ p}|_{B_k\cap W}$ (resp.\ $g^{\circ p}|_{B_{k-1}\cap W}$) is the restriction of a parabolic M\"obius transformation such that $B_k$ (resp.\ $B_{k-1}$) defines a repelling direction for the fixed point $b_k$. The parabolic multiplicity for both transformations is $2$. We now use \cite{LyubichMerenkovMukherjeeNtalampekos:David}*{Lemma 4.18}, which implies that $b_k$ is indeed symmetrically parabolic.   
\end{proof}

\section{Cusps and obstructions for David homeomorphisms}\label{section:cusp}

In this section we prove the first part of Theorem \ref{theorem:basins}. The main result in \cite{IOZ21} implies that there exists no David homeomorphism of the sphere that maps a Jordan region that is smooth except at a quadratic cusp to the unit disk. This result does not apply immediately in our setting because the boundary of a parabolic basin is not necessarily smooth. Our proof is by contradiction and follows similar length-area estimates as in \cite{IOZ21}. 

\subsection{Parabolic estimates}
Let $R$ be a rational map. Let $\Omega$ be an immediate basin of a parabolic fixed point $a$ of multiplicity $2$ such that $R'(a)=1$.  After normalizing, we assume that $\overline \Omega \subset \C$, $a = 0$ is a fixed point, and $R(z) = z+z^2+O(z^3)$ in a neighborhood of $0$. We also assume that $\Omega$ is a Jordan region.

We introduce a coordinate change $w= M(z) = z^{-1}$, which sends the fixed point $0$ to $\infty$. We fix a large $C>0$ and real values $y^-<y^+$ to be specified later. Let $V = M^{-1}(\{w \colon \Re(w) > C\})$, $\alpha^\pm= M^{-1}(\{x+iy^\pm\colon x>C\})$, and $W \subset V$ be the region bounded by $\alpha^\pm$. Note that $\alpha^{\pm}$ are parabolas meeting at $0$. Also, for $t>0$ define $\widetilde J_t= \{\frac{1}{t}+iy: y\in \R\}$ and $J_t=M^{-1}(\widetilde J_t)$.   See Figure \ref{fig:parabolic} for an illustration. The next lemma is independent of the dynamics. 
\begin{lemma}\label{lem:compM}
     For each Borel function $\rho\colon W\to [0,\infty]$ we have
    $$ \int_0^{C^{-1}} \int_{J_t\cap W} \rho \, d\mathcal H^1 dt \simeq  \int_W \rho.$$
\end{lemma}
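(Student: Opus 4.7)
The plan is to change coordinates via $w = M(z) = 1/z$, under which $W$ corresponds to the rectangular strip $M(W) = \{w = x + iy : x > C,\; y^- < y < y^+\}$ by construction of $V$ and $\alpha^\pm$. In these coordinates both sides of the claimed equivalence become iterated integrals over $M(W)$, and the desired comparability will follow from a pointwise comparison of the resulting integrands.

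First I would rewrite the area integral on the right using the conformal change of variables with Jacobian $|(M^{-1})'(w)|^2 = |w|^{-4}$ and Fubini:
\[
\int_W \rho \;=\; \int_C^\infty \int_{y^-}^{y^+} \rho(M^{-1}(x+iy)) \, \frac{dy\, dx}{(x^2+y^2)^2}.
\]
For the left side, I would parametrize $J_t = M^{-1}(\widetilde J_t)$ by $y \mapsto M^{-1}(1/t + iy)$ with $y \in [y^-,y^+]$; its arclength element equals $|(M^{-1})'(1/t+iy)|\,dy = ((1/t)^2+y^2)^{-1}\, dy$. Then the substitution $x = 1/t$ (with $dt = -dx/x^2$) in the outer integral yields
\[
\int_0^{C^{-1}} \int_{J_t \cap W} \rho \, d\mathcal H^1 \, dt \;=\; \int_C^\infty \int_{y^-}^{y^+} \rho(M^{-1}(x+iy)) \, \frac{dy\, dx}{x^2(x^2+y^2)}.
\]

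Finally I would compare the two integrands: their ratio equals $(x^2+y^2)/x^2 = 1 + (y/x)^2$, which on the strip $\{x \geq C,\; y^- \leq y \leq y^+\}$ is bounded between $1$ and $1 + (\max(|y^-|,|y^+|)/C)^2$. This yields the comparability up to constants depending only on $C$ and $y^\pm$, as claimed. There is essentially no obstacle here; the statement is a routine Fubini-plus-change-of-variables identity, and the only point requiring mild care is ensuring that $y$ stays bounded while $x \geq C$, so that the factor $1 + (y/x)^2$ remains uniformly bounded on the strip.
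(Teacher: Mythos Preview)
Your proof is correct. The approach differs from the paper's: the paper stays in the $z$-coordinates and applies the coarea formula to the projection $G(z)=1/\Re(1/z)$, computing that $|\nabla G|\simeq 1$ on $W$ (using that $y=O(x^2)$ along the parabolas $\alpha^\pm$). You instead push everything forward by $M$ to the rectangular strip and reduce both sides to explicit iterated integrals, comparing the integrands $|w|^{-4}$ and $x^{-2}|w|^{-2}$ directly. The two computations are dual to each other --- your ratio $1+(y/x)^2$ is exactly the quantity controlling $|\nabla G|$ in the paper's argument --- but your route is slightly more elementary in that it only uses Fubini and the conformal change-of-variables formula, whereas the paper invokes the coarea formula. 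The paper's route has the minor advantage of being coordinate-free and making the geometric reason for the comparability (the level sets $J_t$ are nearly orthogonal to the ``radial'' direction) more transparent.
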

\begin{proof}
    Consider the projection map $G\colon V \to (0, C^{-1})$ defined by sending a point $z \in J_t$ to $t$.
Note that $G(z) = \frac{1}{\Re(\frac{1}{z})} = x + \frac{y^2}{x}$, where $z = x+iy$. Since $W$ is bounded by the parabolas $\alpha^\pm$, we have $y = O(x^2)$ on $W$. Thus, 
$$\nabla G = \begin{bmatrix}
    1 - \frac{y^2}{x^2}, & \frac{2y}{x}
\end{bmatrix} = \begin{bmatrix}
    1+O(x^2), & O(x)
\end{bmatrix}.$$
Thus, $|\nabla G|$ is comparable to $1$ on $W$. By the coarea formula \cite{Federer:gmt}*{Theorem 3.2.11}, for each Borel function $\rho\colon W\to [0,\infty]$ we have
$$ \int_0^{C^{-1}} \int_{J_t\cap W} \rho \, d\mathcal H^1 dt=\int_0^{C^{-1}} \int_{G^{-1}(t)\cap W} \rho \, d\mathcal H^1 dt = \int_W \rho |\nabla G| \simeq  \int_W \rho.$$
This completes the proof. 
\end{proof}

The map $R$ is conjugate to $F(w) = w - 1 + cw^{-1} + O(w^{-2})$ near $\infty$ for some constant $c\in \C$. Let $\widetilde\Omega$ be the Fatou component $\Omega$ in this coordinate.
Consider the right half-plane $\{w \colon \Re(w) > C\}$ for some constant $C>0$ sufficiently large.

\begin{lemma}\label{lem:estimate}
    The following statements are true for a sufficiently large $C>0$.
    \begin{enumerate}[label=\normalfont(\arabic*)]
        \item\label{est:1} There exist two disjoint arcs $\widetilde{\gamma}^\pm$ in $ \partial\widetilde\Omega \cap \{w \colon \Re(w) > C \}$ each having one endpoint at $\infty$ and one endpoint at $\{w:\Re(w)=C\}$ that are backward invariant under $F$. 
        \item\label{est:2} There exist values $y^\pm \in \R$ so that $\widetilde{\gamma}^\pm\subset \{w: \Im(w)\in (y^-, y^+)\}$.
    \end{enumerate}
    For $t\in (0,C^{-1})$ let  $\widetilde U_t$ be the unbounded component of $\{w:\Re(w) > \frac{1}{t}\}\setminus \overline{\widetilde \Omega}$ and  $\widetilde {\mathbb I}_t$ be the interior of the linear set $\widetilde J_t\cap \partial \widetilde U_t$. For all $t\in (0,C^{-1})$ we have
    \begin{enumerate}[label=\normalfont(\arabic*)]\setcounter{enumi}{2}
        \item\label{est:3} $\widetilde U_t\cup {\widetilde {\mathbb I}_t}\subset \{w: \Im(w)\in (y^-,y^+)\}$ and $l(\widetilde {\mathbb I}_t) \simeq 1$. 
        \smallskip 
    \end{enumerate}
    Let  $\mathbb{I}_t = M^{-1}(\widetilde{\mathbb{I}}_t)$, $\gamma^\pm = M^{-1}(\widetilde{\gamma}^\pm)$, and $U_t = M^{-1}(\widetilde{U}_t)$. For $t\in (0,C^{-1})$ we have
    \begin{enumerate}[label=\normalfont(\arabic*)]\setcounter{enumi}{3}
        \item\label{eqn:I_t} $l(\mathbb{I}_t)  \simeq l(J_t\cap W)\simeq  t^2$ and
        \smallskip
        \item\label{eqn:U_t} $|U_t|  \simeq t^{3}.$
    \end{enumerate}
\end{lemma}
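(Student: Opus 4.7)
My plan is to verify the five claims successively, using the local dynamics of $F$ near the parabolic fixed point at $\infty$ (where $F(w) = w - 1 + cw^{-1} + O(w^{-2})$) and then transporting the estimates back to the $z$-coordinate via $M^{-1}$.

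For \ref{est:1}, I would exploit that $\widetilde\Omega$ is a Jordan region with $\infty \in \partial\widetilde\Omega$, so the boundary is a Jordan curve passing through $\infty$ with two sides meeting there. Picking a point $p^\pm$ with $\Re p^\pm = C$ on each side and iteratively applying the local branch of $F^{-1}$ fixing $\infty$, for $C$ sufficiently large I obtain backward-invariant arcs $\widetilde\gamma^\pm \subset \partial\widetilde\Omega \cap \{\Re w > C\}$ with endpoints at $p^\pm$ and $\infty$, since $F^{-1}(w) = w + 1 + O(1/w)$ shifts $\Re w$ to the right by approximately $1$ per iteration.

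For \ref{est:2}, the plan is to work in a Fatou coordinate $\Phi$ conjugating $F$ to $w \mapsto w-1$ near $\infty$, in which backward orbits are horizontal shifts with constant imaginary part. Using the asymptotic expansion $\Phi(w) = w + c\log w + O(1)$, together with careful bookkeeping of the imaginary contributions per iteration, one selects $\widetilde\gamma^\pm \subset \{y^- < \Im w < y^+\}$ for appropriate $y^\pm$. Then \ref{est:3} follows: $\widetilde U_t$ is bounded between $\widetilde\gamma^\pm$ and $\widetilde J_t$, so $\widetilde U_t \cup \widetilde{\mathbb I}_t$ lies in the same strip, and $l(\widetilde{\mathbb I}_t)$ is comparable to the gap between the two arcs at $\Re w = 1/t$, which is uniformly $\simeq 1$ for small $t$ since the arcs are disjoint and asymptotic to distinct parallel horizontal lines.

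For \ref{eqn:I_t} and \ref{eqn:U_t}, I apply the change of variables $z = M^{-1}(w) = 1/w$, whose derivative has modulus $|w|^{-2}$ and whose Jacobian is $|w|^{-4}$. On $\widetilde U_t \cup \widetilde{\mathbb I}_t$ we have $|w| \simeq \Re w \geq 1/t$, so
\[
l(\mathbb I_t) = \int_{\widetilde{\mathbb I}_t} |w|^{-2}\,d\mathcal{H}^1 \simeq t^2 \cdot l(\widetilde{\mathbb I}_t) \simeq t^2,
\]
and similarly $l(J_t \cap W) \simeq t^2$ using $l(\widetilde J_t \cap \widetilde W) \simeq 1$, where $\widetilde W = M(W)$. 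The area satisfies
\[
|U_t| = \int_{\widetilde U_t} |w|^{-4}\,dA_w \simeq \int_{1/t}^{\infty} (y^+ - y^-)\, x^{-4}\,dx \simeq t^3.
\]

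The main technical obstacle will be \ref{est:2}: controlling the imaginary part of the arcs $\widetilde\gamma^\pm$. The formal Fatou coordinate contains a term $c\log w$, so if $\Im c \neq 0$ a naive per-iteration estimate would produce a logarithmic drift in $\Im w$ along backward orbits. Confining the arcs to a bounded horizontal strip therefore seems to require a refined asymptotic analysis combined with the specific location of $\widetilde\gamma^\pm$ on $\partial\widetilde\Omega$ coming from the Jordan region hypothesis on $\Omega$.
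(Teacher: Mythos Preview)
Your proposal follows the same route as the paper: Fatou coordinates for \ref{est:2}--\ref{est:3}, and the derivative of $M^{-1}$ for \ref{eqn:I_t}. For \ref{eqn:U_t} you compute the area directly in the $w$-coordinate via $\int_{1/t}^\infty x^{-4}\,dx$, whereas the paper instead invokes Lemma~\ref{lem:compM} with $\rho=\chi_{U_t}$ and integrates $l(U_t\cap J_s)\simeq s^2$ over $s\in(0,t)$; your direct argument is the more elementary of the two and works equally well.

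The obstacle you flag for \ref{est:2} is dispatched more simply than you expect. The paper does not track the imaginary part iteration by iteration. Instead: since $\widetilde\gamma^\pm$ is backward invariant under $F$, the image $\Phi(\widetilde\gamma^\pm)$ is invariant under $z\mapsto z+1$, hence generated by integer translates of a single compact fundamental arc, and therefore lies in a horizontal strip in the $\Phi$-plane automatically. One then transfers this back via the asymptotic $\Phi(w)=w+O(\log w)$ together with the observation that $\Im(\log w)=\arg w$ is bounded on $\{\Re w>C\}$. This global periodicity argument sidesteps the per-step logarithmic drift you were worried about; no refined asymptotic analysis and no additional input from the Jordan hypothesis is needed. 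The same reasoning (that $\arg w\to 0$ as $\Re w\to\infty$ within the strip) gives the uniform lower bound on $l(\widetilde{\mathbb I}_t)$ in \ref{est:3}.
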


\begin{figure}
    \centering
    \begin{tikzpicture}
        \begin{scope}
        \draw (0,-1.5) node[below]{$\scriptstyle \Re(w)=C$}--(0,1.5);
        \draw[rounded corners, yshift=-0.3cm] (0,1)--(0.5,1.2)--(1,0.7)--(1.5,1)--(2,1)--(2.5,1.2)--(3,0.7)--(3.5,1) node[right ] {$\widetilde \gamma^+$};
        \draw[rounded corners,yshift=0.3cm] (0,-0.8)--(0.5,-0.5)--(1,-0.9)--(1.5,-1.2)--(2,-0.8)--(2.5,-0.5)--(3,-0.9)--(3.5,-1) node[right] {$\widetilde \gamma^-$}; 
        \draw[blue] (0,1)--(3.5,1) node[above,text=black] {$\Im(w)=y^+$};
        \draw[blue] (0,-1)--(3.5,-1) node[below,text=black] {$\Im(w)=y^-$};

        \draw (1.5,-1.5) node[below]{$\widetilde J_t$}--(1.5,1.5);
        \draw[red, line width =2pt] (1.5,-0.85)--(1.5,0.7) node[pos=0.5,left, text=black]{$\widetilde {\mathbb I}_t$};

        \node at (2.5,0.2) {$\widetilde U_t$};
    \end{scope}

    \draw[->] (4.5,0)--(6,0) node[pos=0.5,above] {$M^{-1}$};
        \begin{scope}[shift={(7,0)}]
            \draw[blue]   plot[smooth,domain=0:3.23] (\x, 0.15*\x*\x);
            \draw[blue]   plot[smooth,domain=0:3.23] (\x, -0.15*\x*\x);
            \fill[black] (0,0) circle (1pt) node[left] {$0$};
            \draw[black]   plot[smooth,domain=0:3.65] (\x, {0.08*\x*\x+0.03*\x*cos(1000*\x)});
            \draw[black]   plot[smooth,domain=0:3.65] (\x, {-0.08*\x*\x-0.03*\x*sin(1000*\x)});
            
            \draw (2,0) circle (2cm);
            \draw (1.3,0) circle (1.3cm);
            \draw [red,line width=2pt,domain=-22:23] plot ({1.3+1.3*cos(\x)}, {1.3*sin(\x)});

            \node[below] at (2,-2) {$V$};
            \node at (1.1,1.1) {$J_t$};
            \node at (2.7,1.4) {$\alpha^-$};
            \node at (2.7,-1.4) {$\alpha^+$};
            \node at (3.4, 1.1) {$\scriptstyle \gamma^-$};
            \node at (3.4, -1.1) {$\scriptstyle \gamma^+$};
            \node[right] at (2.6,0) {$\mathbb I_t$};
            \node at (2,0) {$U_t$};
        \end{scope}
    \end{tikzpicture}
    \caption{Illustration of the setup in Lemmas \ref{lem:estimate} and \ref{lem:compM}.}
    \label{fig:parabolic}
\end{figure}
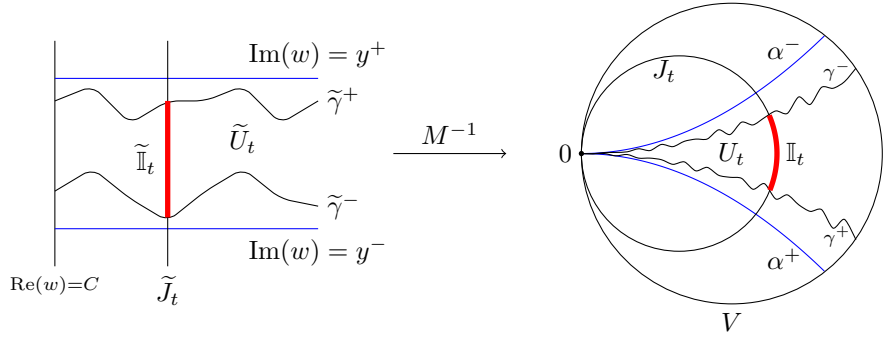

Here the length $l$ is measured by means of the Hausdorff $1$-measure, and $|U_t|$ is the area of $U_t$. See Figure \ref{fig:parabolic} for an illustration of the conclusions of the lemma.

\begin{proof}
Since the parabolic multiplicity of $R$ at $0$ is equal to $2$, for each $\delta>0$ there exists a neighborhood $N$ of $0$ such that $\partial\Omega\cap N$ is contained in the cone $\{re^{i\theta}\colon |\theta| < \delta\}$. Thus, if $C$ is sufficiently large, there exist two disjoint arcs $\widetilde{\gamma}^\pm$ in $ \partial\widetilde\Omega \cap \{w \colon \Re(w) > C\}$ with an endpoint at $\infty$ and an endpoint at $\{w:\Re (w)=C\}$ that are backward invariant under $F$.
This justifies Part \ref{est:1}.
We assume that $\widetilde{\gamma}^-$ lies below $\widetilde{\gamma}^+$, as in Figure \ref{fig:parabolic}.

    We consider the Fatou coordinate.
    Specifically, by \cite{Shi00}*{Proposition 2.2.1}, if $C$ is sufficiently large, there exists an injective holomorphic function $\Phi\colon \{w: \Re(w) >C\}\to \C$ so that $\Phi(F(w)) = \Phi(w) - 1$, and $\Phi$ has asymptotic expansion $w + O(\log w)$ as $w\to\infty$ {within a sector $\{x+iy: x>b-k|y|\}$}.
    Since the arcs $\Phi(\widetilde{\gamma}^\pm)$ are backward invariant under $z\mapsto z-1$, it is easy to check that they are contained in a horizontal strip and $l(\Phi(\widetilde{\mathbb{I}}_t))$ is uniformly bounded from below for $t\in (0,C^{-1})$. Note that $\Im(\log w)$ is bounded, so $\widetilde{\gamma}^\pm$ are contained in some horizontal strip $\{w: \Im(w)\in (y^-, y^+)\}$.
    This justifies Part \ref{est:2}, which also implies the first claim in \ref{est:3}.  Since on a horizontal strip we have $\Im(\log w) \to 0$ as $\Re(w) \to \infty$, we see that $l(\widetilde{\mathbb{I}}_t)$ is uniformly bounded from below for $t\in (0,C^{-1})$. This justifies Part \ref{est:3}.

    Part \ref{eqn:I_t} follows from \ref{est:3} and the observation that  $|(M^{-1})'(z)|\simeq t^2$ when $z\in \widetilde J_t\cap \{w: \Im(w)\in (y^-,y^+)\}$. For part \ref{eqn:U_t} note that $J_s\cap W \supset U_t\cap J_s \supset \mathbb I_s$ for $0<s<t<C^{-1}$. By \ref{eqn:I_t} we have $l(U_t\cap J_s) \simeq t^2$. Now, Lemma \ref{lem:compM}, applied to the characteristic function $\rho=\chi_{U_t}$, gives
    $$|U_t|= \int_W \chi_{U_t} \simeq \int_0^{t} \int_{J_s\cap W} \chi_{U_t}\, d\mathcal H^1 ds \simeq \int_0^t l(U_t\cap J_s) \, ds \simeq t^3.$$
    Thsi completes the proof. 
\end{proof}

\subsection{\texorpdfstring{$L^p$-distortion and $L^p$-quasidisks}{Lᵖ-distortion and Lᵖ-quasidisks}}
We first introduce some notation and basic properties of maps of finite $L^p$-distortion.
We refer the readers to \cite{IOZ21} for more details. A homeomorphism $f\colon \C\to \C$ in the Sobolev class $W^{1,1}_{\loc}(\C)$ is said to have \textit{finite distortion} if there exists a measurable function $K\colon \C\to [1,\infty)$ such that $|Df(z)|^2\leq K(z)J_f(z)$ for a.e.\ $z\in \C$. The smallest function $K(z)\geq 1$ with the above property is denoted by $K_f(z)$.
\begin{definition}Let $1\leq p\leq \infty$.
    A homeomorphism $f\colon \C\to \C$  is said to be a \textit{map of $L^p$-distortion} if $f$ is a map of finite distortion and $K_f \in L^p_{\loc}(\C)$. A domain $\mathbb{X} \subset \C$ is called an \textit{$L^p$-quasidisk} if there exists a homeomorphism $f\colon \C \to \C$ of $L^p$-distortion such that $f(\mathbb{X}) = \D$.
\end{definition}

Note that a homeomorphism $f\colon \C\to \C$ of class $W^{1,1}_{loc}(\C)$ is quasiconformal if and only if it is a map of $L^\infty$-distortion.
Moreover, a David homeomorphism is a map of $L^p$-distortion for all $1\leq p < \infty$, as a consequence of \eqref{exp_integral}.

Suppose $\mathbb{X} \subset \C$ is an $L^p$-quasidisk. Let $f\colon \C \to \C$ be a homeomorphism of $L^p$-distortion. We extend $f$ to $\widehat \C$ so that $f(\infty)=\infty$. Let $\Psi(z) = \frac{1}{\bar{z}}$ be the reflection along the unit circle. Then the map
\begin{align}\label{eqn:reflection}
g= f^{-1} \circ \Psi \circ f \colon \widehat \C\to \widehat \C
\end{align}
is a \textit{reflection} in the boundary $\partial \mathbb{X}$, i.e.,
    \begin{itemize}
        \item $g(\mathbb{X}) = \widehat\C \setminus \overline{\mathbb{X}}$, and
        \item $g(z) = z$ for $z \in \mathbb{X}$.
    \end{itemize}
The following result is a consequence of \cite{IOZ21}*{Theorem 3.2 and  (5.7)}.
\begin{lemma}\label{lemma:reflection}
    Let $\mathbb{X} \subset \C$ be an $L^p$-quasidisk, and let $g$ be the reflection in $\partial \mathbb{X}$ given by \eqref{eqn:reflection}.
    Then for any bounded domain $U \subset \C$ so that $f^{-1}(0) \notin \overline{U}$, we have $g \in W^{1,1}(U)$ and
    \begin{align*}
    \left(\int_U \frac{\left|Dg\right|^p}{\left|J_g\right|^{\frac{p-1}{2}}}\right)^{\frac{1}{p}} \left(\int_U \left|J_g\right|^{\frac{1}{2}}\right)^{\frac{p-1}{p}}  &\leq \| K_f \|^{\frac{1}{2}}_{L^p(U)}\| K_f \|^{\frac{1}{2}}_{L^p(g(U))} \left|U\right|^{\frac{p-1}{2p}}\left|g(U)\right|^{\frac{p-1}{2p}}.
    \end{align*}
\end{lemma}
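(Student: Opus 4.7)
\textbf{Proof plan for Lemma \ref{lemma:reflection}.} The plan is to unpack the definition of $g$ through the chain rule, rewrite the integrand in terms of the distortion function $K_g$, and then split the resulting expression by two applications of Cauchy--Schwarz, with an intermediate change of variables via $g$.

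First, I would verify the regularity statement $g \in W^{1,1}(U)$. Since $f^{-1}(0)\notin \overline U$, both $f$ and $\Psi$ are regular on the relevant sets: $\Psi$ is smooth away from $0$, and $f,f^{-1}$ are homeomorphisms of finite distortion, hence differentiable a.e.\ with $f\in W^{1,1}_{\loc}$ and $f^{-1}\in W^{1,1}_{\loc}$. Composition with $\Psi$ away from $0$ preserves the Sobolev class on the bounded domain $U$, and the chain rule applies a.e. This is exactly the content of Theorem 3.2 in \cite{IOZ21}.

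Next, I would compute the distortion of $g$. Setting $w=f(z)$ and using that $\Psi$ is an anti-M\"obius involution with $|D\Psi(w)|^2=|J_\Psi(w)|=|w|^{-4}$, and the planar identity $K_{f^{-1}}(f(z))=K_f(z)$ (which holds for planar maps of finite distortion), the chain rule yields the pointwise bound
\begin{equation*}
K_g(z)\;\leq\; K_f(z)\,K_f(g(z)) \qquad \text{a.e.\ in }U.
\end{equation*}
The algebraic identity $|Dg|^2 = K_g\, |J_g|$ then gives the key rewriting
\begin{equation*}
\frac{|Dg|^p}{|J_g|^{(p-1)/2}}\;=\;K_g^{p/2}\,|J_g|^{1/2}.
\end{equation*}

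For the main estimate, I would apply Cauchy--Schwarz to split $K_g^{p/2}\leq K_f(z)^{p/2}K_f(g(z))^{p/2}$:
\begin{equation*}
\int_U K_g^{p/2}|J_g|^{1/2}\;\leq\;\left(\int_U K_f(z)^p\,dz\right)^{1/2}\left(\int_U K_f(g(z))^p\,|J_g(z)|\,dz\right)^{1/2}.
\end{equation*}
The second factor is then handled by the area/change-of-variables formula applied to the homeomorphism $g$ (valid by the finite distortion regularity), yielding $\int_U K_f(g(z))^p|J_g(z)|\,dz=\int_{g(U)}K_f^p$. Raising to the power $1/p$ gives exactly the factor $\|K_f\|_{L^p(U)}^{1/2}\|K_f\|_{L^p(g(U))}^{1/2}$. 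For the remaining $\bigl(\int_U|J_g|^{1/2}\bigr)^{(p-1)/p}$, another Cauchy--Schwarz together with $\int_U|J_g|=|g(U)|$ produces the factor $|U|^{(p-1)/(2p)}|g(U)|^{(p-1)/(2p)}$. Multiplying gives the claimed inequality.

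The main obstacle I anticipate is the technical justification of the chain-rule and area-formula steps for maps of mere $L^p$-distortion (as opposed to quasiconformal maps), particularly showing that $g$ inherits enough regularity from $f$ to satisfy the pointwise identity $|Dg|^2=K_g|J_g|$ a.e.\ and the change-of-variables formula $\int_U K_f(g)^p |J_g|=\int_{g(U)} K_f^p$. This requires invoking the Lusin $(N)$-property and related absolute continuity results for planar Sobolev homeomorphisms, which are precisely packaged in \cite{IOZ21}*{Theorem 3.2 and (5.7)}; modulo this black box, the rest of the argument is the algebraic manipulation and the double Cauchy--Schwarz described above.
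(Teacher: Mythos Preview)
The paper does not give its own proof of this lemma; it simply records the statement as ``a consequence of \cite{IOZ21}*{Theorem 3.2 and (5.7)}''. Your proposal correctly reconstructs the argument behind that citation: the pointwise bound $K_g(z)\le K_f(z)K_f(g(z))$, the algebraic rewriting $|Dg|^p/|J_g|^{(p-1)/2}=K_g^{p/2}|J_g|^{1/2}$, and the two Cauchy--Schwarz splits with the change of variables $\int_U K_f(g)^p|J_g|=\int_{g(U)}K_f^p$ are exactly the steps that produce the stated inequality, and your identification of the technical input (Sobolev regularity, chain rule, Lusin $(N)$ for the composition) with \cite{IOZ21}*{Theorem 3.2} is accurate.
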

Here, integration is with respect to Lebesgue measure in the plane; also, in the end of the line, $|\cdot |$ denotes the area of a planar set.

\begin{lemma}\label{lem:subseq}
    Let $\{a_n\}_{n\in \N}$ be a sequence of positive real numbers and $r>0$.  Then  one of the following alternatives holds.
    \begin{enumerate}[label=\normalfont(\arabic*)]
        \item $a_{n}\leq r^{n}$ for all but finitely many $n\in \N$.
        \item There exists a subsequence $\{a_{k_n}\}_{n\in \N}$ of $\{a_n\}_{n\in \N}$ and a constant $M>0$ such that $r^{k_n}< a_{k_n}\leq M a_{k_n+1}$ for each $n\in \N$.
    \end{enumerate}
\end{lemma}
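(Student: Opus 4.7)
The plan is to prove Lemma \ref{lem:subseq} by contrapositive: assume that alternative (1) fails, and construct the subsequence and constant required in (2). Assuming (1) fails, the set $S:=\{n\in\N : a_n>r^n\}$ is infinite; enumerate it as $S=\{s_1<s_2<\cdots\}$. The goal is to extract a subsequence $\{k_n\}\subset S$ and a uniform $M>0$ with $a_{k_n}\leq M\,a_{k_n+1}$ for every $n$.

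The natural first step is a dichotomy on the one-step ratios $\rho_n:=a_n/a_{n+1}$ for $n\in S$. Either (a) there exists some $M>0$ such that the set $\{n\in S:\rho_n\leq M\}$ is infinite, in which case one simply enumerates this set as $\{k_n\}$ and alternative (2) follows immediately with this same $M$; or (b) for every $M>0$, the set $\{n\in S:\rho_n\leq M\}$ is finite, which is equivalent to saying $\rho_n\to\infty$ as $n\to\infty$ through $S$. Thus the sequence $a$ must drop by an unbounded factor at each sufficiently late element of $S$.

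The core of the proof is to show that case (b) leads to a contradiction with the infinitude of $S$, so that only case (a) is possible. The plan here is to telescope: between two consecutive elements $s_n,s_{n+1}\in S$, one has $a_{s_n+1}<a_{s_n}/\rho_{s_n}$ with $\rho_{s_n}\to\infty$, while simultaneously $a_{s_{n+1}}>r^{s_{n+1}}$. Writing
\[
\frac{a_{s_{n+1}}}{a_{s_n}} = \frac{a_{s_n+1}}{a_{s_n}}\cdot\frac{a_{s_{n+1}}}{a_{s_n+1}}
\]
and using the geometric lower bound $a_{s_{n+1}}>r^{s_{n+1}}$ together with the upper bound $a_{s_n}>r^{s_n}$, one expects to obtain a recovery factor $a_{s_{n+1}}/a_{s_n+1}$ that must grow at least like $\rho_{s_n}\cdot r^{s_{n+1}-s_n-1}$ in order to sustain $S$.

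The main obstacle is precisely in making this last step quantitative and rigorous: the sequence $\{a_n\}$ is unconstrained between consecutive elements of $S$, so in principle it may drop arbitrarily and then jump back up at the next $S$-index. One must therefore track carefully how the unbounded drops $\rho_{s_n}\to\infty$ interact with the gaps $s_{n+1}-s_n$ to preclude such recovery from occurring infinitely often. If this quantitative accounting can be pushed through — for instance by iterating the drop-and-recovery relation and comparing the total decay to the geometric rate $r^n$ — one obtains the desired contradiction, and case (a) then delivers the subsequence $\{k_n\}$ and constant $M$ of alternative (2).
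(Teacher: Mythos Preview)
Your approach coincides with the paper's: assume (1) fails so that $S=\{n:a_n>r^n\}$ is infinite, and split according to whether the ratios $\rho_n=a_n/a_{n+1}$ stay bounded along some infinite subset of $S$. Case~(a) is handled correctly and is exactly what the paper does (in more telegraphic form).

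Your hesitation in case~(b) is justified, and in fact the argument cannot be completed: the lemma as stated is false. Take $r=\tfrac12$, $a_{2k-1}=1$, $a_{2k}=2^{-2k-1}$ for $k\in\N$. Then $a_{2k-1}=1>r^{2k-1}$ for every $k$, so alternative~(1) fails; yet the only indices with $a_n>r^n$ are the odd ones, and for $n=2k-1$ one has $a_n/a_{n+1}=2^{2k+1}\to\infty$, so alternative~(2) also fails. The telescoping you propose cannot produce a contradiction precisely because between consecutive elements of $S$ the sequence is free to jump back up arbitrarily, as in this example. The paper's proof has the very same gap: the sentence ``by passing to a subsequence, we assume that this is the case for all $n\in\N$'' is not a valid reduction, and the subsequent implication ``$a_{n+1}/a_n\to 0$, so $a_n\le r^n$ for large $n$'' tacitly uses control of the ratios at \emph{all} indices, not just those in $S$. (One can also build monotone decreasing counterexamples by taking $s_j=j^2$, $a_{s_j}=2r^{s_j}$, and $a_n=2r^{s_{j+1}}$ for $s_j<n<s_{j+1}$, so the monotonicity present in the paper's application does not by itself rescue the statement.)
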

\begin{proof}
    Suppose $a_n>r^{n}$ for infinitely many $n\in \N$. By passing to a subsequence, we assume that this is the case for all $n\in \N$. If $\lim_{n\to\infty}a_n/a_{n+1}=\infty$, then $\lim_{n\to\infty}a_{n+1}/a_n=0$, so $a_{n}\leq r^{n}$ for all sufficiently large $n\in \N$. This is  a contradiction. Therefore, $\liminf_{n\to\infty}a_n/a_{n+1}<\infty$ and there exists a constant $M>0$ and a subsequence $\{a_{k_n}\}_{n\in \N}$ of $\{a_n\}_{n\in \N}$ such that $a_{k_n}\leq Ma_{k_n+1}$.
\end{proof}

We are ready to prove the following more precise statement which immediately implies Theorem \ref{theorem:basins} \ref{theorem:basins:no_david}.

\begin{theorem}
    Let $R$ be a rational map, $a$ be a parabolic fixed point with $R'(a)=1$ that has parabolic multiplicity $2$, and $\Omega$ be an immediate basin of $a$. Then $\Omega$ is not an $L^p$-quasidisk for $p \geq 5$.
\end{theorem}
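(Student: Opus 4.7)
The plan is to argue by contradiction, following the length-area strategy of \cite{IOZ21} for Jordan regions with inward quadratic cusps, with the parabolic geometry estimates of Lemma~\ref{lem:estimate} replacing the cusp geometry. Suppose $\Omega$ were an $L^p$-quasidisk for some $p\geq 5$ and let $f\colon\C\to\C$ be a homeomorphism of $L^p$-distortion with $f(\Omega)=\D$, which after a harmless composition we may arrange so that $f^{-1}(0)\neq 0=a$. Form the reflection $g=f^{-1}\circ\Psi\circ f$ in $\partial\Omega$. The regions $U_t$ and bottleneck arcs $\mathbb{I}_t$ of Lemma~\ref{lem:estimate} will be the test domains for Lemma~\ref{lemma:reflection}; recall that $|U_t|\simeq t^3$, $l(\mathbb{I}_t)\simeq t^2$, and that the two endpoints of $\mathbb{I}_t$ lie on $\partial\Omega$ at Euclidean distance $\simeq t^2$, so they are fixed by $g$.

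The first main step will be to establish a master inequality by applying Lemma~\ref{lemma:reflection} with $U=U_t$ for small $t$. Writing $a_t=|g(U_t)|$ and using H\"older's inequality with exponents $p$ and $p/(p-1)$ to observe
\[\int_{U_t}|Dg|\,\leq\,\Big(\int_{U_t}\tfrac{|Dg|^p}{|J_g|^{(p-1)/2}}\Big)^{1/p}\Big(\int_{U_t}|J_g|^{1/2}\Big)^{(p-1)/p},\]
Lemma~\ref{lemma:reflection} yields $\int_{U_t}|Dg|\lesssim\|K_f\|_{L^p(U_t)}^{1/2}\|K_f\|_{L^p(g(U_t))}^{1/2}\,t^{3(p-1)/(2p)}\,a_t^{(p-1)/(2p)}$. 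For the lower bound on $\int_{U_t}|Dg|$, the coarea formula of Lemma~\ref{lem:compM} gives $\int_{U_t}|Dg|\simeq\int_0^t\int_{\mathbb{I}_s}|Dg|\,d\mathcal{H}^1\,ds$, and since $g$ fixes $\partial\Omega$ pointwise, the arc $g(\mathbb{I}_s)$ joins two $g$-fixed points on $\partial\Omega$ at distance $\simeq s^2$, so $\int_{\mathbb{I}_s}|Dg|\,d\mathcal{H}^1\geq l(g(\mathbb{I}_s))\gtrsim s^2$, giving $\int_{U_t}|Dg|\gtrsim t^3$. The resulting master inequality reads
\[a_t\,\|K_f\|_{L^p(U_t)}^{p/(p-1)}\|K_f\|_{L^p(g(U_t))}^{p/(p-1)}\,\gtrsim\,t^{3(p+1)/(p-1)}.\]

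To convert this into a contradiction I would employ the dichotomy of Lemma~\ref{lem:subseq} applied to $a_n=a_{r^n}$ for a small $r\in(0,1)$ and a ratio parameter $r^{\alpha}$ with $\alpha>0$ to be chosen. In Case 1 the rapid decay $a_{r^n}\leq r^{\alpha n}$, substituted into the master inequality, forces $\|K_f\|_{L^p(U_{r^n})}\|K_f\|_{L^p(g(U_{r^n}))}$ to exceed a negative power of $r^n$; since $\int_A K_f^p\to 0$ as $|A|\to 0$ by dominated convergence, this contradicts $K_f\in L^p_{\loc}$. In Case 2 we obtain a subsequence with $a_{r^{n_k+1}}\geq a_{r^{n_k}}/M>r^{\alpha n_k}/M$, so by monotonicity $a_s$ is uniformly bounded below on $s\in[r^{n_k+1},r^{n_k}]$; one then applies Lemma~\ref{lemma:reflection} to the annulus $U_{r^{n_k}}\setminus\overline{U_{r^{n_k+1}}}$ and strengthens the length bound via the isoperimetric improvement $l(g(\mathbb{I}_s))\gtrsim\sqrt{a_s}-l(\beta_s)$, where $\beta_s\subset\partial\Omega$ has length $\simeq s$ by Lemma~\ref{lem:estimate}. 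For $\alpha$ small enough that $\sqrt{a_s}$ dominates $l(\beta_s)$ on this range, the annular inequality becomes inconsistent with $\|K_f\|_{L^p}$ being bounded, again a contradiction.

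The main obstacle is the exponent chase that pinpoints the threshold $p=5$: the pointwise lower bound $a_t\gtrsim t^{3(p+1)/(p-1)}$ coming from the master inequality must be balanced against the isoperimetric improvement in the annular step so that Case 1 and Case 2 of Lemma~\ref{lem:subseq} both fail for the same choice of $\alpha$. The role of Lemma~\ref{lem:subseq} is exactly to pass to a scale at which $a_t$ is neither too small (so Case 1 is eliminated by the pointwise bound) nor decreasing too fast (so the isoperimetric bound can be exploited in Case 2), and the critical value $p=5$ is precisely the one at which these two regimes can simultaneously be controlled, in direct analogy with \cite{IOZ21}*{Theorem 1.4} for Jordan regions with inward quadratic cusps.
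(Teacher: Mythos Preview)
Your proposal has a genuine gap: the lower bound $l(g(\mathbb{I}_s))\gtrsim s^2$ that you extract from the fact that $g$ fixes the endpoints of $\mathbb{I}_s$ (which are $\simeq s^2$ apart) is too weak to close the argument at $p=5$. With that bound, your master inequality reads $a_t\,C(t)\gtrsim t^{3(p+1)/(p-1)}$, where $C(t)=(\|K_f\|_{L^p(U_t)}\|K_f\|_{L^p(g(U_t))})^{p/(p-1)}\to 0$. For $p=5$ the exponent is $9/2$. In your Case~1 ($a_{r^n}\le r^{\alpha n}$) a contradiction requires $\alpha>3(p+1)/(p-1)=9/2$. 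In your Case~2 you need the isoperimetric term $\sqrt{a_s}$ to dominate $l(\beta_s)\simeq s$, i.e.\ $r^{\alpha n/2}\gg r^{n}$, which forces $\alpha<2$. No single $\alpha$ satisfies both constraints, so the dichotomy of Lemma~\ref{lem:subseq} does not yield a contradiction.

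The missing idea is a \emph{topological} lower bound on $l(g(\mathbb{I}_t))$. Since $U_t$ lies in $\widehat\C\setminus\overline\Omega$ and $g(U_t)\subset\Omega$, and since $g$ fixes the arcs of $\partial\Omega$ bounding $U_t$, the closure of $\mathbb{I}_t\cup g(\mathbb{I}_t)$ contains a Jordan curve that encloses both $U_t\cup g(U_t)$ and the parabolic point $0$. As $\dist(0,\mathbb{I}_t)\simeq t$, this curve has diameter $\gtrsim t$, whence $l(\mathbb{I}_t)+l(g(\mathbb{I}_t))\gtrsim t$; since $l(\mathbb{I}_t)\simeq t^2$, one gets $l(g(\mathbb{I}_t))\gtrsim t$, a full order stronger than your $t^2$. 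With this bound the paper obtains simultaneously $\epsilon^2\lesssim\int_0^\epsilon l(g(U_\epsilon\cap J_t))\,dt$ and, via isoperimetry, $\epsilon\,|g(U_{\epsilon/2})|^{1/2}$ bounded by the same integral. Feeding both into Lemma~\ref{lemma:reflection} yields
\[
\max\{\epsilon^{2p},\,|g(U_{\epsilon/2})|^{p}\}\;\lesssim\;C(\epsilon)\,\epsilon^{\,p-3}\,|g(U_\epsilon)|^{\,p-1},
\]
and now the dichotomy of Lemma~\ref{lem:subseq} with $a_n=|g(U_{2^{-n}})|$ and $r=1/4$ closes cleanly: in Case~1 ($a_n\le\epsilon_n^2$) one gets $\epsilon_n^{5-p}\lesssim C(\epsilon_n)\to 0$, and in Case~2 ($\epsilon_{k_n}^2<a_{k_n}\lesssim a_{k_n+1}$) one gets $a_{k_n}\lesssim C(\epsilon_{k_n})\epsilon_{k_n}^{p-3}$, hence again $\epsilon_{k_n}^{5-p}\lesssim C(\epsilon_{k_n})$. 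Both are contradictions precisely when $p\ge 5$.
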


In the proof we use the notation introduced above. 

\begin{proof}
    Suppose, for the sake of contradiction, that $\Omega$ is an $L^p$-quasidisk for some $p \geq 5$. Let $\epsilon \in (0,C^{-1})$ and $t\in (0,\epsilon)$. Consider the reflection $g$ defined above. The closure of the set $\mathbb I_t\cup g(\mathbb I_t)$ contains a Jordan curve that bounds the set $U_t\cup g(U_t)$ and the point $0$. We have $\dist (0, \mathbb{I}_t) \simeq t$ because $\widetilde{\mathbb I}_t\subset \{w: \Re(w)=1/t, \, \Im(w)\in (y^-,y^+)\}$. Thus 
    $$l(\mathbb I_t)+ l(g(\mathbb I_t)) \geq l(\mathbb I_t \cup g(\mathbb I_t)) \geq  \diam (\mathbb I_t\cup g(\mathbb I_t))\gtrsim t.$$
    By Lemma \ref{lem:estimate} \ref{eqn:I_t}, we have $l(\mathbb{I}_t) \simeq t^2$, so  $l(g(\mathbb{I}_t)) \geq l (\mathbb{I}_t)$ and $l(g(\mathbb I_t)) \gtrsim t$ for all sufficiently small $t>0$. 
    This implies  that $t \lesssim l(g(\mathbb{I}_{t})) \lesssim l(g(U_\epsilon\cap J_t))$, given that $\mathbb I_t\subset U_\epsilon\cap J_t$. Therefore,  for all sufficiently small $\epsilon>0$ and for $t\in (0,\epsilon)$, we have
    \begin{align}\label{theorem:lp_qdisk:l1}
     \epsilon^2\lesssim \int_0^{\epsilon} l(g(U_\epsilon\cap J_t))\, dt.   
    \end{align}
    Next, by the isoperimetric inequality, $|g(U_t)|^{\frac{1}{2}} \lesssim l(g(\mathbb{I}_t))\lesssim l(g(U_\epsilon\cap J_t))$. Integrating this from $\epsilon/2$ to $\epsilon$, we obtain  
    \begin{align}\label{theorem:lp_qdisk:l2}
        (\epsilon - \epsilon/2)  |g(U_{\epsilon/2})|^{\frac{1}{2}} \lesssim \int_{\epsilon/2}^{\epsilon} l(g(U_\epsilon\cap J_t))\, dt \lesssim \int_{0}^{\epsilon} l(g(U_\epsilon\cap J_t))\, dt.
    \end{align}
    Combining \eqref{theorem:lp_qdisk:l1} and \eqref{theorem:lp_qdisk:l2}, we obtain
    \begin{align}\label{theorem:lp_qdisk:l3}
        \max\{\epsilon^2, \epsilon |g(U_{\epsilon/2})|^{1/2} \}\lesssim \int_0^\epsilon l(g(U_\epsilon\cap J_t))\, dt.
    \end{align}
    By H\"older's inequality (see \cite{IOZ21}*{Lemma 5.2}), we have that the following length estimate for a.e.\ $t\in (0,C^{-1})$:
\begin{align}\label{theorem:lp_qdisk:l4}
    l(g(U_\epsilon\cap J_t)) &\leq \int_{U_\epsilon\cap J_t} \left| Dg \right|\, d\mathcal H^1 = \int_{U_\epsilon\cap J_t} \frac{\left|Dg\right|}{\left|J_g\right|^{\frac{p-1}{2p}}} \left|J_g\right|^{\frac{p-1}{2p}}\,d\mathcal H^1 \\
    &\leq \left(\int_{U_\epsilon\cap J_t} \frac{\left|Dg\right|^p}{\left|J_g\right|^{\frac{p-1}{2}}}\,d\mathcal H^1\right)^{\frac{1}{p}} \left(\int_{U_\epsilon\cap J_t} \left|J_g\right|^{\frac{1}{2}}\,d\mathcal H^1\right)^{\frac{p-1}{p}}. \nonumber
\end{align}
    Combining \eqref{theorem:lp_qdisk:l3}, \eqref{theorem:lp_qdisk:l4}, and H\"older's inequality, we obtain 
    \begin{align*}
        \max\{\epsilon^2, \epsilon & |g(U_{\epsilon/2})|^{1/2} \}  \lesssim \int_0^{\epsilon} l(g(U_\epsilon\cap J_t))\, dt \\
        &\lesssim \left(\int_0^{\epsilon} \int_{U_\epsilon\cap J_t} \frac{\left|Dg\right|^p}{\left|J_g\right|^{\frac{p-1}{2}}}\,d\mathcal H^1 \, dt \right)^{\frac{1}{p}} \left(\int_0^{\epsilon}\int_{U_\epsilon\cap J_t} \left|J_g\right|^{\frac{1}{2}}\,d\mathcal H^1\, dt\right)^{\frac{p-1}{p}}.
    \end{align*}
    By Lemma \ref{lem:compM}, we have
    \begin{align*}
        \max\{\epsilon^2, \epsilon |g(U_{\epsilon/2})|^{1/2} \} &\lesssim \left(\int_{U_{\epsilon}} \frac{\left|Dg\right|^p}{\left|J_g\right|^{\frac{p-1}{2}}}\right)^{\frac{1}{p}} \left(\int_{U_{\epsilon}} \left|J_g\right|^{\frac{1}{2}}\right)^{\frac{p-1}{p}}.
    \end{align*}   
    Therefore, by the area estimate in Lemma \ref{lemma:reflection} we have
    \begin{align*}
         \max\{\epsilon^2, \epsilon |g(U_{\epsilon/2})|^{1/2} \} &\lesssim \| K_f \|^{\frac{1}{2}}_{L^p(U_{\epsilon})}\| K_f \|^{\frac{1}{2}}_{L^p(g(U_{\epsilon}))} \left|U_{\epsilon}\right|^{\frac{p-1}{2p}}\left|g(U_{\epsilon})\right|^{\frac{p-1}{2p}}.
    \end{align*}
    Combined with Lemma \ref{lem:estimate} \ref{eqn:U_t}, this gives
    \begin{align}\label{theorem:lp_qdisk:l5}
         \max\{\epsilon^{2p},  |g(U_{\epsilon/2})|^{p} \}&\lesssim C(\epsilon) \epsilon^{p-3}\left|g(U_{\epsilon})\right|^{p-1},
    \end{align}
    where $C(\epsilon)= \| K_f \|^{p}_{L^p(U_{\epsilon})}\| K_f \|^{p}_{L^p(g(U_{\epsilon}))}$, which satisfies $\lim_{\epsilon\to 0}C(\epsilon)=0$.

    We apply \eqref{theorem:lp_qdisk:l5} for $\epsilon_n=2^{-n}$, where $n\in \N$, and we set $a_n=|g(U_{2^{-n}})|$. Then, for each $n\in \N$, we have
    \begin{align}\label{theorem:lp_qdisk:l6}
        \max\{ \epsilon_n^{2p},   a_{n+1}^{p}\} \lesssim C(\epsilon_n)  \epsilon_n^{p-3} a_n ^{p-1}.
    \end{align}
    We apply Lemma \ref{lem:subseq} with $r=1/4$. If the first alternative of the lemma holds, then $a_n\leq \epsilon_n^2$ for all but finitely many $n\in \N$. Therefore, by \eqref{theorem:lp_qdisk:l6}, for all large $n\in \N$ we have
    $$\epsilon_n^{2p}\lesssim C(\epsilon_n) \epsilon_n^{p-3} \epsilon_n^{2p-2}.$$
    Since $p\geq 5$, this leads to a contradiction. Thus, the second alternative of Lemma \ref{lem:subseq} holds and there exists a subsequence $\{a_{k_n}\}_{n\in \N}$ of $\{a_n\}_{n\in \N}$ such that $\epsilon_{k_n}^2\lesssim a_{k_n}\lesssim a_{k_n+1}$ for all $n\in \N$. By \eqref{theorem:lp_qdisk:l6}, we conclude that
    $$ a_{k_n}^{p} \lesssim  a_{k_n+1}^{p} \lesssim C(\epsilon_{k_n}) \epsilon_{k_n}^{p-3} a_{k_n}^{p-1}$$
    so 
    $$1\lesssim C(\epsilon_{k_n}) \epsilon_{k_n}^{p-3} a_{k_n}^{-1}\lesssim C(\epsilon_{k_n}) \epsilon_{k_n}^{p-3} \epsilon_{k_n}^{-2}$$
    for all $n\in \N$.  Since $p\geq 5$, this leads again to a contradiction.
\end{proof}

\section{Uniformization of parabolic basins}\label{section:qdF}

In this section we prove parts \ref{theorem:basins:david} and \ref{theorem:basins:qc} of Theorem \ref{theorem:basins}. We first prove the following preliminary statement.
\begin{lemma}\label{lem:expand}
     Let $R$ be a rational map, $a$ be a fixed point with $R'(a) = 1$ that has parabolic multiplicity $\nu\geq 2$, and $\Omega$ be an immediate basin of $a$. Suppose that $\Omega$ is a Jordan region and the critical and post-critical sets intersect $\partial \Omega$ only at the point $a$. For $\epsilon>0$ let $N_\epsilon$ be the $\epsilon$-neighborhood of $\gamma\coloneqq\partial \Omega \setminus  \{a\}$ in the hyperbolic metric $d_X$ of $X\coloneqq\widehat\C \setminus  P_R$.
     \begin{enumerate}[label=\normalfont(\arabic*)]
         \item\label{lem:expand:1} For all sufficiently small $\epsilon>0$ and for each component $I$ of $R^{-1}(\gamma) \cap \gamma$, the component $N_{\epsilon, I}$ of $R^{-1}(N_\epsilon)$ containing $I$ is a subset of $N_\epsilon$.
         \item\label{lem:expand:2} Suppose that $\nu\geq 3$. Then for all sufficiently small $\epsilon>0$, $N_\epsilon$ is contained in a simply connected domain in $\widehat\C \setminus \mathcal{V}$, where $\mathcal{V}$ is the set of critical values of $R$. In particular, the map $R\colon N_{\epsilon, I} \to N_\epsilon$ is a conformal isomorphism.
     \end{enumerate}
\end{lemma}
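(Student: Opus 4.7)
The foundational tool is the Schwarz--Pick lemma applied to the holomorphic covering $R\colon R^{-1}(X) \to X$. Since $R(P_R) \subset P_R$, we have $R^{-1}(X) \subset X$, and the inclusion is a hyperbolic contraction. Combined with the fact that a covering map is a local isometry in the hyperbolic metrics of its domain and codomain, this yields the expansion inequality $\ell_X(\alpha) \leq \ell_X(R \circ \alpha)$ for every path $\alpha$ in $R^{-1}(X)$. In particular, $R$ pointwise expands the metric $d_X$.

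For Part \ref{lem:expand:1}, I would fix $w \in N_{\epsilon, I}$ and take the $d_X$-geodesic $\sigma$ from $R(w) \in N_\epsilon$ to its nearest point $p \in \gamma$, which has length $<\epsilon$ and lies inside $N_\epsilon$ by the triangle inequality. Using the covering property, lift $\sigma$ through $R$ to a path $\tilde\sigma$ starting at $w$; the expansion inequality gives $\ell_X(\tilde\sigma) < \epsilon$, so its endpoint $\tilde p$ satisfies $d_X(w, \tilde p) < \epsilon$. It then remains to show $\tilde p \in \gamma$. Since the critical set of $R$ is disjoint from $\gamma$ by hypothesis, $R$ is a local biholomorphism in a neighborhood of each point of $\gamma$; hence for $\epsilon$ small enough that $\tilde\sigma$ remains in such a neighborhood, $R^{-1}(\partial\Omega)$ coincides locally with $\partial\Omega$, forcing $\tilde p \in \partial\Omega \cap X = \gamma$. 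This gives $w \in N_\epsilon$, establishing $N_{\epsilon, I} \subset N_\epsilon$.

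For Part \ref{lem:expand:2}, since $\mathcal V \subset P_R$, the set $N_\epsilon \subset X$ automatically avoids $\mathcal V$. The real content is to embed $N_\epsilon$ in a simply connected subdomain of $\widehat\C \setminus \mathcal V$. The analysis takes place in local Fatou coordinates near $a$, where $R$ is conjugate to $w \mapsto w-1$ on attracting and repelling petals of angular width $\asymp 2\pi/\nu$. The boundary $\gamma$ enters $a$ along the two repelling directions bounding the attracting petal of $\Omega$, at angular separation $2\pi/\nu$. For $\nu \geq 3$ this gap is at most $2\pi/3$, so there is an angular sector at $a$ of width $>\pi$ lying outside $\overline\Omega$ and disjoint from $\gamma$; because $P_R \cap \partial\Omega = \{a\}$ and $\mathcal V$ is finite, this sector is free of critical values in a sufficiently small neighborhood of $a$. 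I would then adjoin this exterior sector to a tubular neighborhood of $\gamma$ to obtain a simply connected open set in $\widehat\C \setminus \mathcal V$ that contains $N_\epsilon$ for $\epsilon$ small. Given such an embedding, $R \colon N_{\epsilon,I} \to N_\epsilon$ is a covering into a simply connected domain disjoint from $\mathcal V$, and since $N_{\epsilon,I}$ is connected, the covering degree is one, i.e., $R\colon N_{\epsilon,I} \to N_\epsilon$ is a conformal isomorphism.

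The main obstacle is Part \ref{lem:expand:2}: the hyperbolic geometry of $X$ near $a$ is not the standard isolated-cusp geometry, because $P_R$ typically accumulates at $a$ from inside $\Omega$ along the attracting orbit of critical points in $\Omega$. Controlling how $N_\epsilon$ reaches into the cusp region, and then verifying that one can cap it off by a simply connected piece outside $\overline\Omega$ without enclosing any critical value, requires carefully combining the Fatou-coordinate picture with the crucial hypothesis that the accumulation of $P_R$ sits on the $\Omega$-side of $\gamma$, well separated from the repelling sector used to form the cap.
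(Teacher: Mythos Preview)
Your overall strategy for Part~\ref{lem:expand:1} matches the paper's: lift a geodesic through the covering $R\colon R^{-1}(X)\to X$ and use Schwarz--Pick on the inclusion $R^{-1}(X)\hookrightarrow X$. However, your justification that the lifted endpoint $\tilde p$ lands on $\gamma$ is incorrect as written. The fact that $R$ is a local biholomorphism near each point of $\gamma$ says only that $\gamma$ is \emph{one} sheet of $R^{-1}(\partial\Omega)$ there; it does not prevent other components of $R^{-1}(\gamma)$ from passing arbitrarily close by. The paper closes this gap with an explicit separation step: since $R$ has no critical point on $\partial\Omega$, any component $\gamma'$ of $R^{-1}(\gamma)$ disjoint from $\gamma$ has closure disjoint from $\partial\Omega$, so one can choose $\epsilon$ with $d_Y(\gamma,\gamma')\geq 3\epsilon$ for all such $\gamma'$ (here $Y=R^{-1}(X)$). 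This forces $N_{\epsilon,I}\cap R^{-1}(\gamma)=N_{\epsilon,I}\cap\gamma$, and since the lift $\tilde\sigma$ stays in $N_{\epsilon,I}$, its endpoint must lie on $\gamma$.

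For Part~\ref{lem:expand:2} you have correctly identified both the geometric picture and the main obstacle, but the obstacle is not resolved. The paper's key ingredient, which your sketch lacks, is a quantitative lower bound on the hyperbolic density $\rho_X$ along two carefully chosen segments $L^+\subset\Omega\cap X$ and $L^-\subset X\setminus\overline\Omega$ emanating from $a$ and avoiding the cones that contain $P_R$ near $a$. Specifically, the paper invokes the Beardon--Pommerenke estimate to show $\rho_X(z)\gtrsim \delta_X(z)^{-1}$ on these segments, which gives $d_X(\gamma,L^\pm)>0$ uniformly. This is exactly what prevents $N_\epsilon$ from crossing $L^\pm$ and hence from wrapping around $a$; the simply connected region $W$ is then built as the domain bounded by $L^+\cup L^-\cup\{a\}$ together with two Jordan curves hugging $\partial\Omega$ on either side. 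Your proposal to ``adjoin an exterior sector to a tubular neighborhood of $\gamma$'' does not by itself yield simple connectivity, because without the $L^\pm$ barriers you have no control over how $N_\epsilon$ behaves as both ends of $\gamma$ approach $a$. (A minor point: the angular separation of the two repelling directions bounding $\Omega$ is $2\pi/(\nu-1)$, not $2\pi/\nu$, though this does not affect the conclusion for $\nu\geq 3$.)
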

\begin{proof}
Let $P_R$ be the post-critical set of $R$.
To prove \ref{lem:expand:1}, by choosing an appropriate normalization, we may assume that $X \subset \C$, i.e., $\infty \in P_R$.
    We set $Y=R^{-1}(X)$ and note that $R\colon Y \to X$ is a covering map.

    Since $R$ has no critical point on $\partial \Omega$, if $\gamma'$ is a component of $R^{-1}(\gamma)$ that is disjoint from $\gamma$, then $\overline{\gamma'}$ is disjoint from $\partial \Omega$. Thus, we can choose $\epsilon>0$ small enough so that for any component $\gamma'$ of $R^{-1}(\gamma)$ that is disjoint from $\gamma$, we have
    \begin{align}\label{lem:expand:3e}
        d_Y(\gamma\cap Y, \gamma') \geq  3\epsilon.
    \end{align}

    Let $U$ be the $\epsilon$-neighborhood of $R^{-1}(\gamma)$ in the hyperbolic metric of $Y$. Note that $U$ may be disconnected as $R^{-1}(\gamma)$ may be disconnected.
    Since $R\colon Y \to X$ is a local isometry, {we have $U=R^{-1}(N_\epsilon)$,} so  $N_{\epsilon, I}$ is the component of $U$ that contains $I$. Let $\gamma'$ be a component of $R^{-1}(\gamma)$ that is disjoint from $\gamma$. By \eqref{lem:expand:3e}, the $\epsilon$-neighborhood of $\gamma'$ in $Y$ is disjoint from the $\epsilon$-neighborhood of $R^{-1}(\gamma) \cap \gamma$ in $Y$. Thus, we have $N_{\epsilon, I} \cap R^{-1}(\gamma) = N_{\epsilon, I} \cap \gamma$. 
    
    Let $x \in N_{\epsilon, I}$, so $R(x)\in N_\epsilon$.  Let $\alpha$ be a geodesic in $X$ that connects $R(x)$ and $\gamma$ so that the hyperbolic length $l_X(\alpha)$ is equal to $d_X(R(x), \gamma)<\epsilon$ and $\alpha$ lies in $N_\epsilon$. 
    Let $\widetilde{\alpha}$ be the lift of $\alpha$ starting at $x$. Then $\widetilde \alpha$ lies  in a component of $R^{-1}(N_\epsilon)$, so it must lie in $N_{\epsilon, I}$. Moreover,  $\widetilde \alpha$ a geodesic in $Y$ connecting $x$ and $R^{-1}(\gamma)$ and $l_Y(\widetilde{\alpha}) = l_X(\alpha)$. Since $N_{\epsilon, I} \cap R^{-1}(\gamma) = N_{\epsilon, I} \cap \gamma$, $\widetilde \alpha$ connects $x$ and $R^{-1}(\gamma) \cap \gamma$.  Thus, $d_{X}(x, \gamma)\leq l_X(\widetilde \alpha)$. Note that $Y \subset X$, so by the Schwarz--Pick lemma \cite{BeardonMinda}*{Theorem 10.5} the inclusion map decreases distances. Thus, $l_X(\widetilde{\alpha}) \leq l_Y(\widetilde{\alpha})$ (c.f.\ the argument in \cite{McM94}*{Theorem 3.5}). Therefore, we have 
    $$
        d_{X}(x, \gamma)\leq l_X(\widetilde \alpha)\leq l_Y(\widetilde \alpha)=l_X(\alpha)= d_{X}(R(x), \gamma) < \epsilon.
    $$
    Thus, $x\in N_\epsilon$. This proves part \ref{lem:expand:1}.
    
    To prove \ref{lem:expand:2}, we note that by the theory of parabolic points there exist $2(\nu-1)$ vectors at $a$ corresponding to the attracting and repelling directions. For a small $\delta>0$ we consider, for each attracting and repelling direction $v$, an open cone centered at $a$ of radius $1$ and total angle $\delta$ that is symmetric with respect to the vector $v$. Then there exists a small $r>0$ such that the union $D_\delta$ of these cones contains the part of the Julia set that lies inside $B(a,r)$, except for the point $a$, and also contains the tail of the orbit of each critical point that is attracted to $a$; see Figure \ref{fig:parabolic_cones}. In particular, the post-critical set in $B(a,r)$ lies in $D_\delta\cup \{a\}$.  By choosing $r$ sufficiently small, we may assume that $B(a, r)$ contains no critical value of $R$, except possibly for $a$. 

    \begin{figure}
        \centering
        \begin{tikzpicture}
            \begin{scope}
            \clip (-2.7,-2.6) rectangle (3,2.5);
            \node at (0.3,-0.3) {\includegraphics[width=0.5\linewidth]{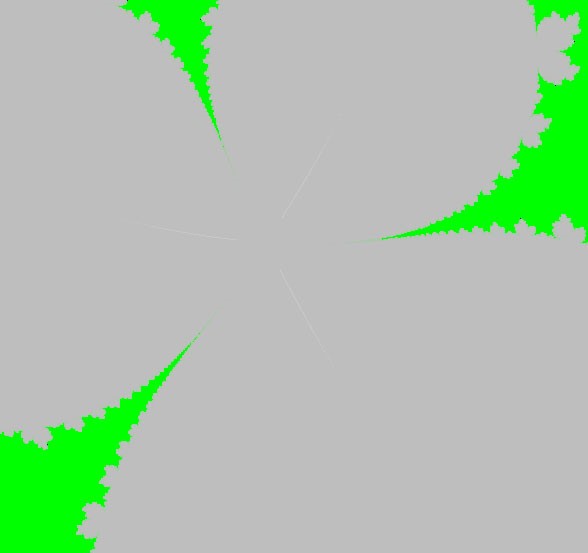}};
            \fill (0,0) circle (1.5pt) node[shift={(-0.3,0.13)}] {$a$};
            \node at (1.3,1.1) {$\Omega$};
            
            \draw[pattern=north east lines] (0,0) --(3.8,0.7)--(3.8,0)--cycle;
            \draw[pattern=north east lines, pattern color=red] (0,0)-- (2.3,2.7)--(1.3,2.7)--cycle;
            \draw[pattern=north east lines]  (0,0)--(-0.7,2.7)--(-1.7,2.7)--cycle;
            \draw[pattern=north east lines, pattern color=red] (0,0) --(-3.8,-0.7)--(-3.8,0)--cycle;
            \draw[pattern=north east lines] (0,0)-- (-2.3,-2.7)--(-1.3,-2.7)--cycle;
            \draw[pattern=north east lines,pattern color=red]  (0,0)--(0.7,-2.7)--(1.7,-2.7)--cycle;

            \draw[pattern=north east lines,pattern color=blue] (0,0)--(3.05,1.3)--(3, 1.8)--cycle;
            \draw[pattern=north east lines,pattern color=blue] (0,0)--(-3,2.8)--(-2.5, 3)--cycle;
            \end{scope}
            
            \node at (3.3,1.6) {$E^+$};
            \node at (-2.1,2.7) {$E^-$};
            
        \begin{scope}[shift={(6,0)}]
            \clip (-2,-2.6) rectangle (3,2.5);
            \node at (0.3,-0.3) {\includegraphics[width=0.5\linewidth]{parabolic_cones.jpg}};
            \fill (0,0) circle (1.5pt) node[anchor=north east] {$a$};
            
            \fill[rounded corners=10, pattern=north east lines,pattern color=purple, opacity=0.3] (-0.8,1)--(-1,0)--(1,-1)--(2,-0.5)--(2.5,0.3)--(3,1)--(3,3)--(-0.6,3)--(-1,2)--(-0.8,1);
            \draw[rounded corners=10] (-0.8,1)--(-1,0)--(1,-1)--(2,-0.5)node[anchor=south east,opacity=1]{$W$}--(2.5,0.3)--(3,1)--(3,3)--(-0.6,3)--(-1,2)--(-0.8,1);

            \definecolor{mygray}{RGB}{190,190,190}
            \draw[fill=mygray] (1,1.4) circle (0.9cm);

            \fill (-0.8,1) circle (1.5pt);
            \fill (1,0.5) circle (1.5pt);
            \draw (0,0)--(1,0.5) node[below] {$L^+$};
            \draw (0,0)--(-0.8,1) node[left] {$L^-$};
        \end{scope}
        \end{tikzpicture}
        \caption{Left: An immediate basin $\Omega$ of a parabolic point $a$ with multiplicity $\nu=4$. Shown are the six cones $D_\delta$ (with black and red stripes) containing $P_R\setminus \{a\}$ and the two cones $E^\pm$ containing the segments $L^\pm$. Right: The segments $L^\pm$ and the simply connected region $W$ that contains $N_\epsilon$.}
        \label{fig:parabolic_cones}
    \end{figure}

    Consider a line segment $L^+$ (resp.\ $L^-$) of length $r/2$ in $\Omega\cap X$ (resp.\ in $X\setminus \overline \Omega$) with an endpoint at $a$ so that $L^+$ (resp.\ $L^-$) is disjoint from $D_\delta$; see Figure \ref{fig:parabolic_cones}. The existence of $L^-$ is precisely what requires that the multiplicity $\nu$ satisfies $\nu\geq 3$.  Consider cones $E^{\pm}$ centered at $a$, of radius $1$ and angle $\delta$, in the direction of $L^\pm$ and symmetric with respect to $L^\pm$. If $\delta$ is small, we can ensure that the cones $E^\pm$ are disjoint from $D_\delta$. 
    For $z\in X$, let $\delta_X(z)$ be the Euclidean distance between $z$ and $\partial X = P_R$. Then $\delta_X(z)$ is proportional to $|z-a|$ for $z \in E^{\pm}\cap B(a,r)$.
    Let $x_z \in \partial X$ be a point so that $|z-x_z| = \delta_X(z)$.
    By the parabolic dynamics near $a$, we can find a point $y_z \in \partial X$ so that $|y_z - x_z|$ is proportional to $|z-x_z|$ for $z \in E^\pm \cap B(a,r)$ (in fact, for any point $y$ that is attracted to the parabolic point $a$ we have $|a-R^k(y)| \simeq k^{-\nu+1}$ for all $k\in \N$).
    Therefore, the quantity
    $$
    \beta_X (z) \coloneqq \inf\left\{\left|\log \frac{|z-u|}{|v-u|}\right|\colon u, v\in \partial X, |z-u| = \delta_X(z)\right\}
    $$
    is uniformly bounded from above in $E^\pm \cap B(a,r)$. By \cite{BeardonPommerenke}*{Theorem 1}, there exists $M>0$ such that the hyperbolic density $\rho_X$ satisfies $\rho_X(z) \geq M^{-1}\delta_X(z)^{-1}$ for $z\in E^\pm \cap B(a,r)$. This implies that the hyperbolic distance in $X$ between $\gamma$ and $L^\pm$ is uniformly bounded from below. As a consequence, we have $N_\epsilon  \subset X\setminus (L^+\cup L^-)$ provided that $\epsilon$ is small enough.  We consider a Jordan curve that surrounds $\overline \Omega\cup L^-$ and passes through the endpoint of $L^-$ that is different from $a$. We also consider a Jordan curve inside $\Omega\setminus L^+$ that passes through the endpoint of $L^+$ that is different from $a$. Let $W$ be the simply connected region that is bounded by these two Jordan curves and $L^+\cup L^-\cup \{a\}$; see Figure \ref{fig:parabolic_cones}. Then we may ensure (by choosing the above Jordan curves very close to $\partial \Omega$) that $W$ does not contain any critical value of $R$; this uses the assumption that $P_R\cap \partial \Omega=\{a\}$. Finally, if we choose $\epsilon$ to be small enough, we can ensure that $N_\epsilon \subset W$. This proves part \ref{lem:expand:2}.    
\end{proof}

\begin{proof}[Proof of Theorem \ref{theorem:basins}: \ref{theorem:basins:david} and \ref{theorem:basins:qc}]
Note that $\Omega$ is invariant under $R$. Let $\phi\colon  \D \to \Omega$ be a conformal map, so that its continuous extension to the boundaries satisfies $\phi(1) = a$.
Then $f\coloneqq\phi^{-1} \circ R \circ \phi$ is a Blaschke product with a parabolic fixed point at $1$. We denote its degree by $r+1$. Similarly, let $\Omega^{*}\coloneqq \widehat\C \setminus \overline{\Omega}$. Let $\psi\colon  \widehat \C\setminus \overline \D \to \Omega^{*}$ be a conformal map so that $\psi(1) = a$. Since $R$ leaves $\Omega$ invariant and has no critical points on $\partial \Omega$, for every point $x\in \partial \Omega$, there exists a neighborhood $U_x$ so that $R(U_x \cap \Omega^{*}) \subset \Omega^{*}$. Thus $g\coloneqq\psi^{-1} \circ R \circ \psi$ is defined in a neighborhood of $\mathbb S^1$ in $\widehat \C \setminus \overline \D$. By the Schwarz reflection principle, $g$ defines an analytic covering map on $\mathbb S^1$ of degree $r+1$. Note that the preimages $f^{-1}(1) = \{a_0 = 1,a_1,\dots , a_r\}$ and $g^{-1}(1) = \{b_0 = 1,b_1,\dots , b_r\}$, written in cyclic order, define Markov partitions for $f$ and $g$, respectively. The map $h=\psi^{-1}\circ \phi$ on $\mathbb S^1$ is a topological conjugacy between $f$ and $g$ that maps $a_k$ to $b_k$, $k\in \{0,\dots,r\}$.

\begin{lemma}
    If $a$ has parabolic multiplicity $\nu\geq 3$, then $g$ has a parabolic fixed point at $1$.    
    If $\nu=2$, then $g$ has a repelling fixed point at $1$.
\end{lemma}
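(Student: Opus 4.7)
The idea is to read off $g'(1)$ from the existence or non-existence of points of $\Omega^*$ near $a$ that are attracted to $a$ under $R$, transporting this information to $g$ via $\psi$. Since $\Omega$ is a Jordan region, Carath\'eodory's theorem upgrades $\psi$ to a homeomorphism $\overline{\widehat\C\setminus\D}\to\overline{\Omega^*}$ with $\psi(1)=a$, so orbits $\{g^n(w_0)\}\subset\widehat\C\setminus\overline\D$ accumulating at $1$ correspond via $\psi$ to orbits $\{R^n(\psi(w_0))\}\subset\Omega^*$ accumulating at $a$. I view $g$ as extended by Schwarz reflection to a neighborhood of $\mathbb S^1$ in $\C$, satisfying $g\circ\sigma=\sigma\circ g$ with $\sigma(w)=1/\bar w$. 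As a preliminary step I would verify that $g'(1)\ge 1$: if $g'(1)<1$ the fixed point $1$ would attract a planar neighborhood of itself, which pushed forward through $\psi$ would produce a neighborhood of $a$ in $\Omega^*$ whose points are attracted to $a$ under $R$; but such a neighborhood necessarily meets the repelling petal of $R$ at $a$ (present in $\Omega^*$ for every $\nu\ge 2$), producing orbits starting in the repelling petal yet converging to $a$, contradicting the escape property of a repelling petal. Combined with orientation-preservation on $\mathbb S^1$, one has $g'(1)\in\R_{\ge 1}$, and $g$ at $1$ is either parabolic ($g'(1)=1$) or strictly repelling ($g'(1)>1$).

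For $\nu\ge 3$, the parabolic point $a$ admits $\nu-1\ge 2$ immediate basins, so there is an immediate basin $\Omega'\ne\Omega$ of $a$ lying in $\Omega^*$. Picking $z_0\in\Omega'$ sufficiently close to $a$ yields $R^n(z_0)\to a$, whence $w_0:=\psi^{-1}(z_0)\in\widehat\C\setminus\overline\D$ is close to but distinct from $1$ and satisfies $g^n(w_0)=\psi^{-1}(R^n(z_0))\to 1$. Since a repelling fixed point admits no nearby orbit converging to it, this rules out $g'(1)>1$ and forces $g'(1)=1$, yielding the parabolic fixed point at $1$.

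For $\nu=2$, the unique immediate basin of $a$ is $\Omega$. I would first show that no point of $\Omega^*$ sufficiently close to $a$ is attracted to $a$: any orbit converging to $a$ must eventually enter an attracting petal of $R$ at $a$, and for $\nu=2$ the unique such petal lies in $\Omega$, but the local forward-invariance $R(U_a\cap\Omega^*)\subset\Omega^*$ from the setup of $g$ keeps any orbit starting in $\Omega^*$ close to $a$ inside $\Omega^*$ for all time, yielding a contradiction. Now if $g'(1)=1$, the Fatou flower theorem provides attracting petals of $g$ at $1$; the identity $g\circ\sigma=\sigma\circ g$ makes the union of these petals $\sigma$-invariant, hence it must meet $\widehat\C\setminus\overline\D$ in a nonempty open set, and any $w_0$ there produces $\psi(w_0)\in\Omega^*$ close to $a$ attracted to $a$, contradicting the previous claim. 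Hence $g'(1)>1$ and $g$ is repelling at $1$. The main delicate step is the ``no attraction from $\Omega^*$'' claim for $\nu=2$, which combines the uniqueness of the immediate basin with the local forward-invariance of $\Omega^*$; the remainder is standard Fatou-petal and Schwarz-reflection bookkeeping.
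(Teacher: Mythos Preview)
Your proposal is correct and follows essentially the same strategy as the paper: both arguments read off the nature of the fixed point $1$ for $g$ from the presence or absence of attracting petals for $R$ inside $\Omega^*$, the paper phrasing this via set inclusions $U\cap\Omega^*\subset R(U\cap\Omega^*)$ versus $R(V\cap\Omega^*)\subset V\cap\Omega^*$ and you via individual converging orbits. One small point worth tightening: the local forward-invariance $R(U_a\cap\Omega^*)\subset\Omega^*$ keeps an orbit in $\Omega^*$ only while it remains in $U_a$, so your claim that orbits ``stay in $\Omega^*$ for all time'' needs the orbit never to leave $U_a$; this is automatic for the specific orbits you actually use (those coming from a small attracting petal of $g$, which is forward-invariant and can be taken inside $\psi^{-1}(U_a)$), so the argument goes through once stated that way.
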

\begin{proof}
    Suppose that $\nu\geq3$.
    Since $g$ maps the circle  $\mathbb S^1$ to itself and $g$ is analytic at $1$, we see that $g'(1)$ is a real number. Suppose that $|g'(1)|\neq 1$, so $1$ is either a repelling or an attracting fixed point for $g$. In the first case, there exists some neighborhood $U$ of $a$ so that $U \cap \Omega^{*} \subset R(U \cap \Omega^{*})$,  while in the second case, there exists some neighborhood $V$ of $a$ so that $R(V \cap \Omega^{*}) \subset V \cap \Omega^{*}$. Either is not possible, as $a$ has both attracting and repelling petals in $\Omega^{*}$. Thus, $g$ has a parabolic fixed point at $1$. 
    The proof for the other case is similar.
\end{proof}

We verify that $f$ satisfies conditions \ref{condition:hp}, \ref{condition:uv} and \ref{condition:qs}. 
As remarked, $f$ is a Blaschke product with a parabolic fixed point at $1$ and $f|_{\overline \D}$ is conjugate to $R|_{\overline \Omega}$. Points in the unit disk and its exterior lie in the Fatou set of $f$ and are attracted to $1$. Thus, the Julia set is contained in $\mathbb S^1$. By the conjugacy, we see that the Julia set of $f$ is all of $\mathbb S^1$. This implies that the multiplicity of the parabolic point $1$ of $f$ is equal to $3$. Moreover, $f|_{\mathbb S^1}$ is expansive; see the discussion in \cite{LyubichMerenkovMukherjeeNtalampekos:David}*{Example 3.5}. We apply Lemma \ref{lem:expand} to $f$ to find a neighborhood $Y$ of $\mathbb S^1\setminus \{1\}$ so that $f^{-1}(Y)\subset Y$. For $k\in \{0,\dots,r\}$ let $X_k$ be the component of $f^{-1}(Y)$ that contains the arc $\arc{(a_k, a_{k+1})}$. Then by Lemma \ref{lem:expand}, $X_k \subset Y$ and $f\colon X_k \to Y$ is a conformal map.
Therefore, $\mathcal{P}(f; \{a_0,\dots, a_r\})$ satisfies condition \ref{condition:uv}.
Conditions \ref{condition:hp} and \ref{condition:qs} are satisfied as $f$ is analytic and expansive; see Remark \ref{remark:hyp_par_analytic}. Since $f$ is conjugate to $g$ on $\mathbb S^1$, we conclude that $g$ is also expansive.

In order to prove part \ref{theorem:basins:qc} of the theorem, by the fundamental theorem of conformal welding \cite{AstalaIwaniecMartin:quasiconformal}*{Theorem 5.10.1}, it suffices to show that the conjugacy $h$ is a quasisymmetry. 
We choose $\epsilon$ small enough so that Lemma \ref{lem:expand} applies to the map $R$ and let $\widetilde{V}\coloneqq \psi^{-1}(N_\epsilon\cap \Omega^{*})$.
Let $V\coloneqq \widetilde{V}\cup (\mathbb S^1\setminus \{1\}) \cup \sigma(\widetilde{V})$, where $\sigma$ is the reflection along the circle $\mathbb S^1$. For $k\in \{0,\dots,r\}$ let $U_k$ be the component of $g^{-1}(V)$ that contains the arc $\arc{(b_k, b_{k+1})}$. Then by Lemma \ref{lem:expand}, $U_k \subset V$ and $g\colon U_k \to V$ is a conformal map.
Therefore, $\mathcal{P}(g; \{b_0,\dots , b_r\})$ satisfies condition \ref{condition:uv}.
Conditions \ref{condition:hp} and \ref{condition:qs} are satisfied as $g$ is analytic and expansive on $\mathbb S^1$.  Finally, note that the conjugacy $h$ maps the parabolic point $1$ of $f$ to the parabolic point $1$ of $g$. By Theorem \ref{theorem:extension_generalization}, the map $h$ is quasisymmetric. This completes the proof of \ref{theorem:basins:qc}.

Next, we prove part \ref{theorem:basins:david}. Since $R$ is locally univalent on $\partial \Omega$, we conclude that $g$ is locally univalent on $\mathbb S^1$, so it has no critical points on $\mathbb S^1$. For a small $\delta>0$ consider two circles $C^{\pm}$ centered at $0$ and of radii $1\pm \delta$, so that the annular region $A_\delta$ between $C^{\pm}$ lies in the range of $g$. 
Since $g$ has no critical points on $\mathbb  S^1$, we may choose a smaller $\delta>0$ so that  the component $A_\delta'$ of $g^{-1}(A_\delta)$ that contains $\mathbb S^1$ has no critical points. Thus, $g\colon  A_\delta' \to A_\delta$ is a covering map.
We use the local dynamics at the repelling point $1$ of $g$ and choose a smaller $\delta$ if necessary to find a Jordan arc $\mathcal L$ passing through the point $1$, so that $\mathcal{L} \subset g(\mathcal{L})$ and the endpoints of $\mathcal L$ are on $C^{\pm}$, but otherwise $\mathcal L$ is disjoint from $C^{\pm}$. Then $C^{\pm}$ and $\mathcal L$ bound a simply connected region $Z$. By applying Lemma \ref{lem:expand} \ref{lem:expand:1} to $R$, upon choosing a small $\epsilon$, we can find a neighborhood $V'$ of $\mathbb S^1\setminus \{1\}$ so that $g^{-1}(V') \subset V'$ and $V'\setminus \mathcal L\subset Z$. We define $V\coloneqq V'\setminus \mathcal{L}$. For $k\in \{0,\dots,r\}$ let $U_k$ be the component of $g^{-1}(V)$ that contains the arc $\arc{(b_k, b_{k+1})}$. Then by construction, $U_k \subset V$, and $g\colon U_k \to V$ is a conformal map, because $Z$ is simply connected. So $\mathcal{P}(g; \{b_0,\dots, b_r\})$ satisfies conditions \ref{condition:hp}, \ref{condition:uv} and \ref{condition:qs}. Note that the conjugacy $h^{-1}$ maps the hyperbolic point $1$ of $g$ to the parabolic point $1$ of $f$. By Theorem \ref{theorem:extension_generalization}, the map $h^{-1}$ extends to a David map on the disk, which we denote by $h^{-1}$. 

Define a Beltrami coefficient $\mu$ as follows. In $\D$ we let $\mu$ be the pullback of the standard complex structure under $h^{-1}$. In $\widehat \C\setminus \overline \D$ we define $\mu$ to be the standard complex structure. By the David integrability theorem (Theorem \ref{theorem:integrability_david}) there exists a David homeomorphism $H$ of $\widehat \C$ with $\mu_H=\mu$. Consider the map
$$\alpha = \begin{cases} \phi \circ h^{-1}\circ H^{-1} & \textrm{in $H(\D)$}\\
\psi\circ H^{-1} & \textrm{in $H(\widehat \C\setminus \D)$.}
\end{cases}$$
The two definitions agree on $J=H(\mathbb S^1)$ because $h=\psi^{-1}\circ \phi$, so $\alpha$ is a homeomorphism of $\widehat \C$.  By Theorem \ref{theorem:stoilow}, $h^{-1}\circ H^{-1}$ is a conformal map, so $\alpha$ maps conformally $H(\D)$ onto $\Omega$. Also, $H^{-1}$ is conformal in $H(\widehat \C\setminus \overline \D)$ so $\alpha$ maps conformally $H(\widehat \C\setminus \overline \D)$ onto $\Omega^*$. By Theorem \ref{theorem:removable}, $\alpha$ is conformal on $\widehat \C$ and thus it is a M\"obius transformation. The map $\alpha \circ H$ is a David homeomorphism (e.g.\ by Proposition \ref{prop:david_qc_invariance}) that maps the unit disk onto $\Omega$. This proves part \ref{theorem:basins:david}.
\end{proof}

\section{Blaschke products}\label{section:blaschke}

\begin{proof}[Proof of Theorem \ref{theorem:blaschke_conditions}]
We will verify condition \ref{condition:uv}. Assuming that, conditions \ref{condition:hp}, \ref{condition:qs} and \ref{condition:qs_strong} are trivially satisfied; see Remark \ref{remark:hyp_par_analytic}. To verify the condition \ref{condition:uv}, we consider the hyperbolic and parabolic cases separately.

\subsection*{Hyperbolic case}

Choose $r_0 < 1$, and let $C_{r_0}$ be the circle centered at the origin with radius $r_0$. Let $A_{r_0}$ be the annulus bounded by $C_{r_0}$ and $\mathbb S^1$. Since $f$ is hyperbolic, $f$ is expanding near $\mathbb S^1$. Thus, for $r_0$ sufficiently close to $1$, we have $A'_{r_0}\coloneqq f^{-1}(A_{r_0}) \subset A_{r_0}$. Since $a_k$ is either a repelling periodic point or is eventually mapped to some repelling periodic point, we can inductively construct proper arcs $\mathcal{L}_k \subseteq A'_{r_0}$ connecting $a_k$ and $\partial A'_{r_0}$ so that $\mathcal{L}_j \subset f(\mathcal{L}_k)$ if $f(a_k) = a_j$.
These proper arcs cut $A'_{r_0}$ into $r+1$ regions $\widetilde{U_k}$, where $\partial \widetilde{U_k} \cap \mathbb S^1 = \arc{[a_k,a_{k+1}]}$, $k=0,\dots,r$.
Let $U_k\coloneqq \widetilde{U_k} \cup \inter(A_k) \cup r(\widetilde{U_k})$ where $r$ is the reflection along $\mathbb{S}^1$, and $V_k\coloneqq  f(U_k)$.
Then it is easy to check that condition \ref{condition:uv} is satisfied in this case.

\subsection*{Parabolic case}
Note that $a_0$ has parabolic multiplicity $3$; see the discussion in \cite{LyubichMerenkovMukherjeeNtalampekos:David}*{Example 3.5}. By Lemma \ref{lem:expand}, there exists a neighborhood $V'$ of $\mathbb{S}^1\setminus \{a_0\}$ so that for each component $U'$ of $f^{-1}(V')$, we have $f: U' \to V'$ is conformal and $U' \subset V'$. Note that $a_0 \in \partial V'$, as otherwise $V'$ contains $\mathbb S^1$, and the map $f\colon U' \to V'$ has degree at least $2$.
Let $l$ be the smallest number so that for any $a_k \in \{a_0,\dots, a_r\}$ in the grand orbit of the parabolic fixed point $a_0$, we have $f^l(a_k) = a_0$.
Let $U'_0,\dots, U'_m$ be the components of $f^{-l}(V')$ in cyclic order.
Observe that $f\colon U_i' \to f(U_i')$ is conformal and $f(U_i')$ contains $U_j'$ whenever $U_j' \cap f(U_i') \neq \emptyset$.

We take a small round disk $D_{a_0}$ centered at $a_0$ of radius $\delta$. Let $a \in f^{-l}(a_0)$ with pre-period $q$. Denote by $D_a$ the component of $f^{-q}(D_{a_0})$ that contains $a$.
By choosing $\delta$ small, we may assume that
\begin{itemize}
    \item if $a \in f(U_i')$, then $D_a \subset f(U_i')$ and
    \item if $a \neq a_0$, then $f: D_a \to D_{f(a)}$ is a conformal isomorphism.
\end{itemize}
Let $U_0'',\dots, U_n''$ be the components of the open set
$$
\bigcup_{i=0}^m U_i' \cup \bigcup_{a \in f^{-l}(a_0)\setminus \{a_0,\dots, a_r\}} D_a.
$$
Then by construction, we have that
\begin{itemize}
    \item $\partial U_i'' \cap \mathbb{S}^1 \subset \{a_0,\dots, a_r\}$,
    \item $f\colon U_i'' \to f(U_i'')$ is conformal (because this is a proper map onto a subset of $V'$), and
    \item $f(U_i'')$ contains $U_j''$ whenever $U_j'' \cap f(U_i'') \neq \emptyset$.
\end{itemize}

Let $a_k \in \{a_0,\dots, a_r\}$ be a point not in the grand orbit of the parabolic fixed point $a_0$. Then $a_k$ is eventually mapped to a repelling periodic point. We have $a_k \in U''_i$ for some unique $i\in \{0,\dots,n\}$. We can inductively construct proper arcs $\mathcal{L}_k \subset U_i''$ passing through $a_k$ so that $\mathcal{L}_j \subset f(\mathcal{L}_k)$ if $f(a_k) = a_j$.
By cutting each $U''_i$ into finitely many pieces using these proper arcs $\mathcal{L}_k$, we obtain a collection of $r+1$ open neighborhoods $U_k$ of $A_k=\arc{(a_k,a_{k+1})}$, $k \in \{ 0, \dots , r\}$. 
Let $V_k= f(U_k)$. Then one can verify that \ref{condition:uv} is satisfied.
\end{proof}

\begin{proof}[Proof of Theorem \ref{theorem:intro_mating}]
    We follow closely the steps of the proof of \cite{LyubichMerenkovMukherjeeNtalampekos:David}*{Theorem 5.2}. Let $d$ be the degree of $f$ and $g$ and let $P(z)=z^d$.  By \ref{expansive:conjugate}, there exist homeomorphisms $h_1,h_2\colon \mathbb S^1\to \mathbb S^1$ that conjugate $P$ to $f,g$, respectively. Note that both $h_2$ and $h\circ h_1$ conjugate $P$ to $g$. By the uniqueness part in \ref{expansive:conjugate}, we may precompose $h_2$ with a rotation so that it agrees with $h\circ h_1$. See the diagram in Figure \ref{figure:mating}.

    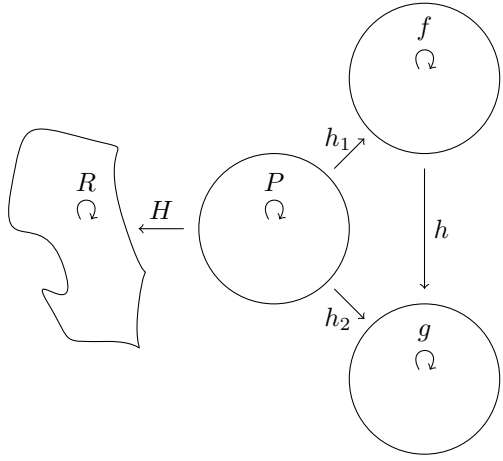
\begin{figure}
        \centering
        \begin{tikzpicture}
            \draw (0,0) circle (1cm) node (A) {};
            \draw (2,2) circle (1cm) node (B) {};
            \draw (2,-2) circle (1cm) node (C) {};

            \draw[->] (A) edge[loop above, out=120, in=60, looseness=7] node {$P$} (A);
            \draw[->] (B) edge[loop above, out=120, in=60, looseness=7] node {$f$} (B);
            \draw[->] (C) edge[loop above, out=120, in=60, looseness=7] node {$g$} (C);

            \draw[->] (0.8,0.8) -- node[pos=0.5, xshift=-0.15cm, yshift=0.2cm]{$h_1$} (1.2,1.2) ;
            \draw[->] (0.8,-0.8) -- node[pos=0.5, xshift=-0.15cm, yshift=-0.2cm]{$h_2$} (1.2,-1.2) ;
            \draw[->] (2,0.8)--node[pos=0.5, right]{$h$}(2,-0.8);

            \draw plot [smooth cycle, tension=2] coordinates{ (-3.5,0.5) (-2.5,1.2) (-2,0.1) (-1.8,-1) (-2.2,-1.5) (-3,-1) (-2.8,-0.5)};
            \node (F) at (-2.5,0) {};
            \draw[->] (F) edge[loop above, out=120, in=60, looseness=7] node {$R$} (F);      
            \draw[->] (-1.2,0)--node[pos=0.5, above]{$H$}(-1.8,0);
        \end{tikzpicture}
        \caption{The conjugacies in the proof of Theorem \ref{theorem:intro_mating}.}
        \label{figure:mating}
    \end{figure}

    We claim that $h_1$ has a David extension to the disk. Consider a Markov partition associated to $P$ that contains all fixed points. Then $h_1$ induces a Markov partition associated to $f$ that contains the parabolic fixed point, if any. Both Markov partitions of $P$ and $f$ satisfy conditions \ref{condition:hp}, \ref{condition:uv}, \ref{condition:qs}, and \ref{condition:qs_strong} by Theorem \ref{theorem:blaschke_conditions}. Moreover, each point of the Markov partition of $P$ is symmetrically hyperbolic. By Theorem \ref{theorem:extension_generalization}, $h_1$ has a David extension to $\bar \D$, which we still denote by $h_1$. With the same argument, $h_2$ has a David extension to $\widehat \C\setminus  \D$. 
    
    We define a Beltrami coefficient $\mu$ as follows. In $\D$ we let $\mu$ be the pullback of the standard complex structure under $h_1$. In $\widehat\C\setminus \bar \D$, $\mu$ is defined to be the pullback of the standard complex structure under $h_2$. By the David integrability theorem (Theorem \ref{theorem:integrability_david}), there exists a David homeomorphism $H$ of $\widehat \C$ with $\mu_H=\mu$. Consider the map
    \begin{align*}
        R=\begin{cases}
            H\circ h_1^{-1} \circ f \circ h_1\circ H^{-1} & \text{in}\,\, H(\D)\\
             H\circ h_2^{-1}\circ g \circ h_2\circ H^{-1}& \text{in}\,\, H(\widehat \C\setminus \D).
        \end{cases}
    \end{align*}
    Note that the two definitions agree on $J=H(\mathbb S^1)$ because $h$ conjugates $f$ to $g$ and $h_2= h\circ h_1$ on $\mathbb S^1$. 
    
    By Theorem \ref{theorem:stoilow}, $h_1\circ H^{-1}$ is a conformal map from $H(\D)$ onto $\D$. Let $\phi$ be the inverse of that map. Similarly, $h_2\circ H^{-1}$ is a conformal map from $H(\widehat\C\setminus \bar \D)$ onto $\widehat \C\setminus \overline \D$, and we let $\psi$ be its inverse. By Carath\'eodory's theorem, $\phi$ and $\psi$ extend homeomorphically to the closures of their domains and conjugate $f$ and $g$ to $R$, respectively.  Therefore, $R$ is holomorphic outside $H(\mathbb S^1)$. Moreover, $\phi=\psi\circ h$ on $\mathbb S^1$. 

    It remains to show that $R$ is holomorphic on $H(\mathbb S^1)$, which will imply that $R$ is rational. Note that $R$ is a branched cover with no critical points on $H(\mathbb S^1)$. Thus, each point of $H(\mathbb S^1)$ has a neighborhood in which $R$ is conformal. By Theorem \ref{theorem:removable}, $H(\mathbb S^1)$ is locally conformally removable, so $R$ is holomorphic on $H(\mathbb S^1)$.

    Finally, we show the ultimate uniqueness statement in Theorem \ref{theorem:intro_mating}. Suppose that there exist another Jordan curve $J_0$ with complementary regions $A_0,B_0\subset \widehat\C\setminus J_0$, a rational map $R_0$, and conformal maps $\phi_0\colon \overline \D\to \overline A_0$, $\psi\colon \overline \D\to \overline B_0$ such that $\phi_0$ conjugates $f$ to $ R_0$, $\psi_0$ conjugates $g$ to $R_0$, and $\phi=\psi\circ h$ on $\mathbb S^1$. We define 
    \begin{align*}
        G(z)=
        \begin{cases}
          \phi_0\circ \phi^{-1}(z), & z\in \overline A\\
          \psi_0\circ \psi^{-1}(z), & z\in \overline B
        \end{cases}
    \end{align*}
    and note that the two definitions agree on $J=\partial A=\partial B$. The map $G$ is a homeomorphism of $\widehat \C$ that is conformal in $\widehat \C\setminus J$ and conjugates $R$ to $R_0$. By Theorem \ref{theorem:removable}, $J$ is conformally removable, so $G$ is conformal in $\widehat \C$, and thus it is a M\"obius transformation. 
\end{proof}

\bibliography{biblio}
\end{document}